\renewcommand\fs@ruled{%
  \def\@fs@cfont{\rmfamily}%
  \let\@fs@capt\floatc@plain%
  \def\@fs@pre{\hrule height.8pt depth0pt \kern2pt}
  \def\@fs@post{\kern2pt\hrule\relax}
  \def\@fs@mid{\kern2pt\hrule\kern2pt}
  \let\@fs@iftopcapt\iffalse}
\newtheorem{assumption}{Assumption}
\newcommand*{\QEDB}{\hfill\ensuremath{\square}}%
\newcommand{\Euclidsp}{\mathbb{E}}
\newcommand{\iprod}[2]{\left\langle {#1}, {#2} \right\rangle}
\newcommand{\lrpar}[1]{\left(#1\right)}
\newcommand{\proj}{\Pi}
\newcommand{\argmin}{\arg\min}
\newcommand{\norm}[1]{\left\lVert {#1} \right\rVert}
\newcommand{\bracket}[1]{\left( {#1} \right)}
\newcommand{\nat}{\mathrm{nat}}
\newcommand{\bbE}{\mathbb{E}}
\newcommand{\dist}{\mathrm{dist}}
\newcommand{\natmap}{G^\nat}
\newcommand{\der}{\text{\textbf{J}}}
\newcommand{\derD}{\mathrm{D}}
\newcommand{\sgn}{\mathrm{sgn}}
\newcommand{\spn}{\mathrm{span}}
\newcommand{\logdet}{\mathrm{logdet}}
\newcommand{\Diag}{\mathrm{Diag}}
\newcommand{\trace}{\mathrm{Tr}}
\newcommand{\relint}{\mathrm{relint}}
\newcommand{\inter}{\mathrm{int}}
\newcommand{\coni}{\mathrm{cone}}
\newcommand{\affine}{\mathrm{aff}}
\newcommand{\lrbrace}[1]{\left\{#1\right\}}
\begin{document}

\title{A global linear and local superlinear (quadratic) inexact non-interior continuation method for variational inequalities over general closed convex sets}



\author{Le Thi Khanh Hien        \and Chek Beng Chua
}


\institute{L. T. K. Hien \at
              Department of Mathematics and Operations Research, University of Mons, Belgium. \\           
              \email{thikhanhhien.le@umons.ac.be}
           \and
      C. B. Chua \at
              School of Physical \& Mathematical Sciences, Nanyang Technological University \\           
              \email{cbchua@ntu.edu.sg}                        
}

\date{Received: date / Accepted: date}
\maketitle





\begin{abstract}
We use the concept of barrier-based smoothing approximations to extend the non-interior continuation method, which was proposed by B. Chen and N. Xiu for nonlinear complementarity problems based on Chen-Mangasarian smoothing functions, to an inexact non-interior continuation method for variational inequalities over general closed convex sets. Newton equations involved in the method are solved inexactly to deal with high dimension problems. The method is proved to have global linear and local superlinear/quadratic convergence under suitable assumptions. We apply the method to non-negative orthants, positive semidefinite cones, polyhedral sets, epigraphs of matrix operator norm cone and epigraphs of matrix nuclear norm cone. 
\end{abstract}

\keywords{inexact non-interior continuation method \and variational inequality \and smoothing approximation \and polyhedral set \and epigraph of matrix operator norm \and epigraph of matrix nuclear norm \and strict complementarity}
 \subclass{65K15 \and 90C25 \and 90C30 }

\section{Introduction}
Let $X$ be a given closed convex subset of a finite dimensional real vector space $\mathbb{E}$ equipped with an inner product $\langle\cdot,\cdot\rangle$,  and  $F: \mathbb{E} \rightarrow \mathbb{E}$ be a continuously differentiable map.  We consider the following variational inequality $VI(X,F)$ over general closed convex sets: find $x\in X$ such that
\begin{equation}\label{eq:compl_equation}
 \langle F(x), y-x \rangle \geq 0 \; \mbox{for all} \; y\in X.
\end{equation}
Many well-known optimization problems can be cast as VIs. For examples, when $X$ is a cone in $\mathbb{E}$, the variational inequality (VI) is equivalent to a nonlinear complementarity problem (NCP) of finding $x \in X$ such that $F(x) \in X^\sharp$ and $\iprod{x}{F(x)} =0$, where $X^\sharp$ is the dual cone of $X$; convex optimization problems and fixed point problems can also be cast as VIs, see \cite[Chapter 1]{Facchinei}.  We let $\norm{\cdot}$ be the norm induced by the inner product $\iprod{\cdot}{\cdot}$, and denote the Euclidean projection of $z$ onto $X$ by $\proj_X(z)$, i.e.,
\[
\proj_X(z) = \argmin_{x \in X} \frac12 \norm{x - z}^2.
\]
The map \[
(x, y) \in \mathbb{E} \times \Euclidsp \mapsto (x - \proj_X(x-y), F(x) - y)
\]
is called the \emph{natural map}  $\natmap$. It was proved in \cite[Proposition 1.5.8]{Facchinei} that $x$ is a solution of $VI(X,F)$ if and only if $x$ satisfies $\natmap (x,y)=0$ for some $y\in \mathbb{E}$, and this equation is called the \emph{natural map equation}. In this paper, we are interested in solving $VI(X,F)$ via the natural map equation.

The natural map equation, in general, is nonsmooth since the Euclidean projection is not always smooth; and as such it can not be solved by typical Newton-based methods. To remedy this restriction, one can use nonsmooth Newton-based methods, see e.g., \cite[Chapter 7, Chapter 8]{Facchinei2}, \cite{Pang2,Pang,QiSun,Ralph}, or use smoothing approximations of the Euclidean projection to solve the natural map equation numerically by a smoothing Newton continuation method. A continuous map $p:\mathbb{E} \times \mathbb{R}_{+} \rightarrow \mathbb{E}$, parameterized by $\mu\in\mathbb{R}_{+}$, is called a smoothing approximation (SA) of the Euclidean projection $\Pi_X$ over a closed convex set $X$ if $p$ converges point-wise to $\Pi_X$ as $\mu \rightarrow 0$, i.e., $ p(z,0)=\Pi_X(z)$, and for each $\mu\in \mathbb{R}_{++}$, $p(\cdot,\mu)$ is differentiable. 
When the convergence is uniform, $p$ is called a uniform SA.
A very well-known example of the SA is the Chen-Harker-Kanzow-Smale (CHKS)
function (see \cite{Chen_Harker,Kanzow2}) $p(z,\mu)=\frac12(\sqrt{z^2 + 4\mu^2} + z)$, which approximates the projection  onto the set of non-negative real numbers $\mathbb{R}_+$. The CHKS function belongs to the class of smoothing functions introduced by C. Chen and O. L. Mangasarian, see \cite{Chen_Mangasarian}, which is computed as 
$$p(z,\mu)=\int_{-\infty}^z\int_{-\infty}^t \frac{1}{\mu} d\bracket{\frac{x}{\mu}} dx dt,
$$
where $d(\cdot)$ is a certain probability density function. When $d(x)=\frac{2}{(x^2+4)^{3/2}}$ the double integral equals to the CHKS function. L. Qi and D. Sun \cite{QiSun02} developed this type of convolution-based SAs to  approximate general nonsmooth functions. However, it is not computable in most cases since it contains a multivariate integral. Recently, C. B. Chua and Z. Li \cite{Chua_Li} introduced barrier-based smoothing functions, which only approximate the Euclidean projection onto convex cone with nonempty interior. This type of SAs has been extended to general closed convex sets with non-empty interior in \cite{Chua_Hien}. Note that the barrier-based SA can be computed via proximal mappings of smooth maximal monotone maps. 
We denote $p_\mu(z)=p(z,\mu)$ to emphasize that $\mu$ will be used as a parameter, and define a SA of the natural map $\natmap$
\begin{align} \label{eq:natmap}
H_\mu(x,y)=\begin{pmatrix}
x-p_\mu(x-y)\\
F(x) - y \end{pmatrix} .
\end{align} 

It is worth noting that, in the literature, there are many approaches to solve the VI in its general form \eqref{eq:compl_equation} or when $X$ or $F$ have more specific structures, for examples, KKT conditions based methods, merit function based algorithms, interior point methods, projection methods, to name a few. We refer the readers to \cite{Facchinei2} for a comprehensive review of algorithms for solving VIs. In this paper, we are specifically interested in the SA approach which we briefly review in the next paragraph. 

Chen and Mangasarian are the pioneers in using smoothing methods for solving VIs. Based on the smoothing of the plus function, Newton-based algorithms are proposed in \cite{Chen_Mangasarian} for solving NCPs and box-constrained VIs. The authors in \cite{Gabriel1995Smoothing} proposed a class of smoothing functions and use it to approximate the mixed NCP by a smooth system of nonlinear equations. They then study the limiting behaviour of the path generated by the approximate solutions of a sequence of least squares problems. Chen et al in  \cite{ChenQiSun1998} define an important property for the sequence of the derivatives of the SA which is called the  Jacobian consistency property; by using this property together with some mild assumptions, for the first time in the literature, a local superlinear convergence rate is established for a smoothing Newton method that solves \emph{a nonsmooth equation}. Note that the global convergence rate is not established for this smoothing Newton method.  The method is then applied to solve a box constrained VI.  Li and Fukushima \cite{Li2000} derive the CHKS smoothing function (which satisfies the Jacobian consistency property) for the mixed complementarity problem; and under suitable conditions, they establish local quadratic convergence properties for a smoothing Newton method. In another series of works, the SA approach is incorporated with  the idea of path-following methods to yield the non-interior continuation methods, which are also known as non-interior path-following methods, for solving NCPs, see \cite{Chen_Harker,Chen1995}. Burke and Xu \cite{Burke_Xu}, for the first time, establish the global linear convergence of the non-interior continuation method for linear complementarity problem, and extended the result to NCPs with uniform $P$-functions \cite{Xu2000}. The method are further extended and well-documented in the literature, see e.g.,  \cite{Bintong,Chen_Ye,Kanzow2} and references therein. 

We notice that although SA algorithms have been deeply studied for NCPs and VIs with specific structures of $X$, studying the algorithms for solving the VI over a general convex set $X$ is still an interesting and challenging topic. The main difficulty of designing smoothing approximation algorithms for VIs over general convex sets lies in how we develop the smoothing approximation of the Euclidean projection and the properties of its derivative sequence (such as the Jacobian consistency property). To the best of our knowledge, there exists only two approaches of smoothing approximation of Euclidean projections onto general convex sets, which are mentioned above -- the convolution-based and the barrier-based approximation. While the former approximation has been widely applied and employed to develop smoothing approximation algorithms for VIs with strong convergence rate guaranty, the later still has potential developments to explore. In this paper, we will employ the barrier-based smoothing functions to approximately solve the natural map equation.

Solving smooth equations by classical Newton method was proved to have local quadratic convergence. However, it was also verified that if a large size problem is considered then solving a system of Newton equations at each iterate would be very expensive. Inexact Newton methods \cite{Dembo}, which calculate an appropriate solution to the Newton equations satisfying some level of precision, are more practical and suitable for large scale problems. In particular, to solve the system of nonlinear equation $G(x)=0$, instead of using the Newton's step $x_{k+1}=x_k + s_k$, where $s_k$ is calculated by $G'(x_k) s_k=-G(x_k)$, the authors in \cite{Dembo} propose a class of inexact Newton methods which solves the Newton equation inexactly $G'(x_k) s_k=-G(x_k) + r_k$, where the relative residual  $\norm{r_k}/\norm{G(x_k)}$ does not exceed some predetermined precision $\eta_k$. Many Newton-like methods that do not require to compute the closed forms of $G'(x_k)$ or its inverse belong to this class of inexact Newton methods. For example, Dennis \cite{Dennis1968} proposes a Newton-like method that updates $x_{k+1}=x_k - M(x_k) G(x_k)$, where $M(x_k)$ is a linear operator satisfying some conditions; under these conditions we can prove that the relative residual, with $r_k=G'(x_k) (-M(x_k)G(x_k))+G(x_k)$, satisfies some precision, see \cite[Section 4]{Dembo}. Some other examples include Newton-Krylov methods \cite{Brown1990,KNOLL2004357}, truncated Newton methods \cite{Pernice1998}. The inexact Newton step $s_k$ can also be found by applying efficient iteration solvers for the system of linear equations such as the GMRES method \cite{Bellavia2001,Saad1986}, the splitting methods \cite{Bai2003,Bai2005}, conjugate gradient methods \cite{Stoer1983}. The inexact Newton methods also converge locally superlinearly/quadratically under some natural assumptions of the relative residuals, see \cite{Dembo}. Taking into account these advantages, we use inexact Newton methods to solve Newton equations in our path-following continuation method.

\subsection{Contribution}
Our main contribution is that, using the barrier-based smoothing approximation, we improve the noninterior continuation method in \cite{Bintong} to solve \emph{VI over general closed convex sets}. The method employs centering steps, which give its \emph{global linear convergence}, together with approximate Newton steps, which help to achieve  \emph{local superlinear/quadratic convergence}. Newton equations involved in the method are solved inexactly to handle large scale problems. We provide the application of our method to non-negative orthants, positive semidefinite cones, polyhedral sets, epigraphs of matrix operator norm and epigraphs of matrix nuclear norm. 

To achieve global linear convergence rate, we assume that the derivative $\derD H_{\mu_k}(x^{(k)},y^{(k)})$, where $\left\{(x^{(k)},y^{(k)})\right\}_{k\geq 0}$ is the sequence generated by our algorithm, is nonsingular and $\left\{\norm{\derD H_{\mu_k}(x^{(k)},y^{(k)})^{-1}}\right\}_{k\geq 0}$ is bounded. We prove that the monotonicity of $F$ is sufficient for  the non-singularity of $\derD H_{\mu_k}\bracket{x^{(k)},y^{(k)}}$. It was proved in \cite{Chen_Tseng} that, when $X$ is a positive semidefinite cone, the strong monotonicity  of $F$ together with the uniform boundedness of $\lrbrace{\norm{\derD F(x^{(k)})}}_{k\geq 0}$ is sufficient for the boundedness of the sequence $\left\{\norm{\derD H_{\mu_k}(x^{(k)},y^{(k)})^{-1}}\right\}_{k\geq 0}$. We will extend this result to the case when $X$ is a general convex set, see Section \ref{subsec:sufficient conds}. It is worth mentioning that, for more specific cases of $X$, the strong monotonicity condition of $F$ can be relaxed. We refer the readers to \cite{Bintong} for the case when $X$ is a non-negative orthant, to \cite{Chua_Hien} for the case when $X$ is an epigraph of operator norm or an epigraph of nuclear norm.

To obtain local superlinear convergence rate, we develop an important property of the derivative sequence $\derD p_\mu(z)$ for the barrier-based SA. In particular, we prove that if $\derD p_\mu(z)$ converges to a linear operator $T^*$ when $(z,\mu)$ converges to $(z^*,0)$, where $z^*=x^*-y^*$ and $(x^*,y^*)$ is any limit point of $\left\{(x^{(k)},y^{(k)})\right\}_{k\geq 0}$, then the operator $T^*$ must be $\derD \Pi_X(z^*)$, which also implies that the projector onto $X$ is differentiable at $z^*$. To achieve $\xi$-order convergence rate with $\xi>1$, we further need  $ \norm{\derD p_\mu(z)-T^*}=O\bracket{\norm{(z-z^*,\mu)}^{\xi-1}}$. We verify the property with $\xi=2$ (i.e., the local convergence rate will be quadratic) for non-negative orthants, positive semidefinite cones, polyhedral sets, epigraphs of matrix operator norm and epigraphs of matrix nuclear norm. We further show that for non-negative orthants, positive semidefinite cones, epigraphs of matrix operator norm and epigraphs of matrix nuclear norm, differentiability of $\Pi_X(\cdot)$ at $z^*$ is equivalent to strict complementarity of $(x^*,y^*)$.   

\subsection{Organization and notation}
The paper is organized as follows. In the next section, we give some preliminaries on barrier-based smoothing approximation. In Section \ref{sec:method_describe}, we describe the inexact non-interior continuation method and prove its global linear, and local superlinear/quadratic convergence. We present application of the algorithm to specific convex sets in Section \ref{sec:application}. Finally, we conclude our paper in Section \ref{sec:end}.

We end this section by explaining some notations that will be used in the paper. For a  Fr{\'e}chet-differentiable map $F$, we use $\derD F$ to denote the derivative of $F$ and $\der F$ to denote its Jacobian. \color{black}
We use  $\nabla f(x)$ and  $\nabla^2 f(x)$ to denote the gradient and the Hessian of a twice Fr{\'e}chet-differentiable function $f$. For a vector $x\in \mathbb{R}^m$, $[x]_+$ denotes the vector whose components are $[x_i]_+=\max\{0,x_i\}$, $i=1,\ldots,m$, and $\Diag(x)$ denotes the $m\times m$ diagonal matrix with $(\Diag(x))_{ii}= x_i, i=1,\ldots,m$. For a matrix $z\in \mathbb{R}^{m\times m}$ we define two linear matrix operators
$$ \mathfrak S(z)=\cfrac12(z+z^T), \quad \mathfrak T(z)=\cfrac12(z-z^T).
$$
We denote the following norms of a matrix $z\in \mathbb{R}^{m\times n}$
\begin{itemize}
\item[$\bullet$] $\|z\|_\infty$: the $l_\infty$ norm, i.e, $\|z\|_\infty=\max\{ |z|_{ij}, 1\leq i\leq m, 1\leq j\leq n\},$
\item[$\bullet$] $\|z\|_1$: the $l_1$ norm, i.e, $\|z\|_1=\sum\limits_{ 1\leq i\leq m, 1\leq j\leq n}|z_{ij}|,$
\item[$\bullet$] $\norm{z}_F$: the Frobenius norm,
\item[$\bullet$] $\|z\|$: the operator norm, i.e, the largest singular value of $z$,
\item[$\bullet$] $\|z\|_*$: the nuclear norm, i.e, the sum of all singular values of $z$.
\end{itemize}
We use $I$ to denote the identity matrix, whose dimension is clear in the context, and $\mathbf{I}$ to denote the identity operator. For operators $P_1, P_2:\mathbb{E}\to \mathbb{E}'$, we use $P_1\prec P_2$ to mean $\iprod{(P_2-P_1)[u]}{u}>0$ for all $u\ne 0$. We denote the set of $n\times n$ symmetric matrices by $\mathbb{S}^{n}$, the set of $n\times n$ orthogonal matrices by $\mathcal{O}^n$. For $z\in \mathbb{S}^n$, we use $\lambda_1(z) \geq \lambda_2(z) \geq \lambda_m (z)$ to denote its eigenvalues with multiplicity, $\lambda_f(z)=(\lambda_1(z),\ldots,\lambda_m(z))^T$, and $\mathcal{O}(z)$ to denote the set of orthogonal matrices $u$ such that $z=u \Diag(\lambda_f(z)) u^T$. For a given matrix $z\in \mathbb{R}^{m\times n}$ with $m\leq n$, we denote $\sigma_1(z) \geq \sigma_2(z) \geq \ldots \geq \sigma_m(z)$ to be singular values of $z$ with multiplicity and $\sigma_f(z)=(\sigma_1(z),\ldots,\sigma_m(z))^T$. We use $\circ$ to denote the Hadamard product (or the entry-wise product), i.e., $(z\circ w)_{ij}=z_{ij} w_{ij}$ for $z,w\in \mathbb{R}^{m\times n}$. For a $n\times n$ matrix $z$ we use $\trace(z)$ to denote its trace. The norm of a multilinear operator $L: \mathbb{E}\times\ldots\times\mathbb{E} \rightarrow \mathbb{E'}$ is given by
$$\norm{L}=\sup\limits_{v=(v_1,\ldots,v_k)\in \mathbb{E}\times\ldots\times \mathbb{E}} \left\{ \cfrac{\norm{Lv}}{\norm{v}}: v\ne 0\right\},
$$
where the norm of $v=(v_1,\ldots,v_k)\in \mathbb{E}\times\ldots\times \mathbb{E}$ is $\norm{v}=\sqrt{\norm{v_1}^2+\ldots+\norm{v_k}^2}$. We use $\inter(X)$ and $\relint(X)$ to denote the interior and the relative interior of $X$. 
\section{Preliminaries}
\label{sec:Prel}
In this section, we give some background knowledge and establish some necessary results to be used in the sequel.
\begin{definition}\label{def:barrier}
(a) A function $f:\inter(X) \rightarrow \mathbb{R}$ is called a barrier of $X$ if $f(x_k)\rightarrow \infty$ for any sequence $\{x_k\} \subset \inter(X)$ converging to a boundary point of $X$.

(b) A function $f$ is called $\vartheta$-self-concordant barrier for $X$ if it is three times continuously differentiable, and satisfies the following conditions
\begin{itemize}
\item[($\alpha$)]  the following value, called barrier parameter, is finite
$$\vartheta=\inf \left\{ t\geq 0:  \inf\limits_{x\in \inter(X), h\in \mathbb{E}} t \langle h, \nabla^2 f(x)h \rangle -\langle \nabla f(x),h\rangle^2 \geq 0\right\};
$$
\item[($\beta$)] $|\derD^3 f(x)[h,h,h]| \leq 2 (\derD^2 f(x)[h,h])^{3/2} \quad \forall x\in \inter(X), h\in \mathbb{E}$.
\end{itemize}
\end{definition}
\begin{proposition}\label{prop:barrier_property} 
If $f$ is a $\vartheta$-self-concordant barrier of $X$ then
$$| \derD^3 f(x)[h_1,h_2,h_3]| \leq 2 \bracket{\derD^2 f(x)[h_1,h_1] \derD^2 f(x)[h_2,h_2] \derD^2 f(x)[h_3,h_3]}^{1/2}.
$$
\end{proposition}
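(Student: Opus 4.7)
The plan is to recast the desired inequality as a statement about the operator norm of a symmetric trilinear form on a Hilbert space, and then invoke Banach's classical theorem on symmetric real multilinear forms. Fix $x \in \inter(X)$ and set $H(u,v) = \derD^2 f(x)[u,v]$ and $T(u_1,u_2,u_3) = \derD^3 f(x)[u_1,u_2,u_3]$. Both $H$ and $T$ are symmetric by equality of mixed partial derivatives, and $H$ is positive semidefinite by convexity of $f$. If $H$ has a nontrivial kernel, applying condition $(\beta)$ to a perturbed vector $h_0 + s u + t v$ with $h_0 \in \ker H$ and extracting the leading coefficient (the left side is of order $\epsilon^2$ at $s=t=\epsilon$ while the right side is of order $\epsilon^3$) forces $T(h_0, u+v, u+v) = 0$ for all $u,v$, hence $T(h_0,\cdot,\cdot) \equiv 0$ by polarization of the induced symmetric bilinear form. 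Thus the proposition holds trivially whenever an argument lies in $\ker H$, and I may pass to the quotient $\mathbb{E}/\ker H$ and assume henceforth that $H$ is positive definite. Endow $\mathbb{E}$ with the inner product $\iprod{u}{v}_x = H(u,v)$ and the local norm $\|h\|_x = H(h,h)^{1/2}$, making it a real Hilbert space.

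In this language, condition $(\beta)$ reads $\sup_{\|h\|_x = 1} |T(h,h,h)| \le 2$, i.e., the diagonal supremum of $T$ is at most $2$. By homogeneity of $T$ in each argument, the desired inequality is equivalent to the full operator supremum $\sup_{\|h_1\|_x = \|h_2\|_x = \|h_3\|_x = 1} |T(h_1, h_2, h_3)|$ being at most $2$. Since $T$ is a symmetric real trilinear form on a real Hilbert space, Banach's theorem (1938) asserts that these two suprema coincide, whence the proposition follows at once.

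The main obstacle is Banach's theorem for $k = 3$ itself, which I would either cite from a standard reference or prove by the classical compactness/Lagrange-multiplier argument. At a maximizer $(h_1^\ast, h_2^\ast, h_3^\ast)$ of $T$ on the product of unit spheres (without loss of generality with value $M > 0$), symmetry of $T$ together with the KKT conditions yields $T(h_j^\ast, h_k^\ast, \cdot) = M \iprod{h_i^\ast}{\cdot}_x$ for each cyclic permutation $(i,j,k)$ of $(1,2,3)$. A perturbation replacing a pair $(h_i^\ast, h_j^\ast)$ by its normalized sum $(h_i^\ast + h_j^\ast)/\|h_i^\ast + h_j^\ast\|_x$ preserves the maximum value, and iterating this reduction produces a diagonal maximizer $(h,h,h)$ with $\|h\|_x = 1$, at which condition $(\beta)$ gives $M \le 2$. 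I emphasize that naive polarization alone would yield only the factor $k^k/k! = 9/2$ at $k=3$, insufficient to preserve the tight constant $2$, so Banach's theorem is genuinely needed here.
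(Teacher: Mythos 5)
Your proposal is correct and follows essentially the same route as the paper, whose proof is simply a citation of Nesterov's Appendix 1: there the inequality is obtained exactly by passing to the local inner product $\langle u,v\rangle_x=\derD^2 f(x)[u,v]$ and invoking the classical fact that a symmetric trilinear form on a Euclidean space attains its norm on the diagonal. Your kernel reduction and the citation of Banach's theorem are fine; only the sketched Lagrange-multiplier argument would need extra care (the degenerate case $h_i^\ast+h_j^\ast=0$ and the fact that the pair-merging iteration actually reaches a diagonal maximizer), but since you may cite the classical result, as the paper itself does, this is immaterial.
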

We refer to \cite[Appendix 1]{Yurii} for the proof.

For a given closed convex set $X$ with a differentiable barrier $f$, we define the map $p:\mathbb{E}\times \mathbb{R}_+ \rightarrow \mathbb{E}$ as
\begin{align}\label{eq:bsa} \left\{
\begin{array}{ll}
p(z,\mu) + \mu^2 \nabla f(p(z,\mu)) =z\quad &\mbox{when} \;\mu>0
\\ p(z,\mu)= \Pi_X(z)\quad &\mbox{when} \; \mu=0.
\end{array}\right.
\end{align}
Condition ($\alpha$) in Definition \ref{def:barrier} is used to prove the following theorem, which leads to Definition \ref{def:smoothing} -- the definition of barrier-based smoothing approximation.
\begin{theorem}\cite[Theorem 3.1, Theorem 3.2]{Chua_Hien}\label{theorem:bsa}
If $f$ is a twice continuously differentiable barrier on $X$, then the map $p$ defined via \eqref{eq:bsa} is a smoothing approximation of the Euclidean projector $\Pi_X$. In addition, if $f$ is a $\vartheta$-barrier then the map $p$ is Lipschitz continuous with modulus $\sqrt{\vartheta}$ in the smoothing parameter; consequently, $p$ is a uniform smoothing approximation.
\end{theorem}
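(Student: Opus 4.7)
The plan is to exploit the variational characterization of $p(z,\mu)$. For $\mu>0$, the defining equation $p+\mu^2\nabla f(p)=z$ is precisely the first-order optimality condition of the strictly convex problem
\[
\min_{x\in\inter(X)}\;\phi_z(x;\mu)\defeq \tfrac12\|x-z\|^2+\mu^2 f(x).
\]
Since $f$ is a barrier, $\phi_z(\cdot;\mu)$ blows up at $\partial X$, and the quadratic term handles behavior at infinity, so $\phi_z(\cdot;\mu)$ is coercive on $\inter(X)$; strict convexity makes the minimizer unique, which identifies $p(z,\mu)$ as this unique minimizer. For continuity on $\mathbb{E}\times\mathbb{R}_+$ and $C^1$-regularity on $\mathbb{E}\times\mathbb{R}_{++}$, I would apply the implicit function theorem to $G(p,z,\mu)\defeq p+\mu^2\nabla f(p)-z$, whose partial $\partial_p G=\mathbf{I}+\mu^2\nabla^2 f(p)$ is positive definite hence invertible on $\inter(X)$.

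For pointwise convergence $p(z,\mu)\to\Pi_X(z)$ as $\mu\downarrow 0$: fix any $\tilde x\in\inter(X)$; from $\phi_z(p(z,\mu);\mu)\le\phi_z(\tilde x;\mu)$ one deduces that $\{p(z,\mu)\}_{\mu\downarrow 0}$ is bounded and that $\mu^2 f(p(z,\mu))\to 0$. Since $f$ blows up on $\partial X$, any cluster point $\bar x$ must lie in $X$; passing to the limit produces $\tfrac12\|\bar x-z\|^2\le \tfrac12\|\tilde x-z\|^2$, and ranging $\tilde x$ over $\inter(X)$ (dense in $X$) combined with uniqueness of $\Pi_X(z)$ yields $\bar x=\Pi_X(z)$.

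For the Lipschitz estimate under the $\vartheta$-self-concordance hypothesis, I implicitly differentiate $p+\mu^2\nabla f(p)=z$ in $\mu$ to get
\[
(\mathbf{I}+\mu^2\nabla^2 f(p))\,\partial_\mu p=-2\mu\,\nabla f(p).
\]
Set $H\defeq\nabla^2 f(p)$, $g\defeq\nabla f(p)$, and $v\defeq -\partial_\mu p/(2\mu)=(\mathbf{I}+\mu^2 H)^{-1}g$, so that $\|v\|^2+\mu^2\langle v,Hv\rangle=\langle v,g\rangle$. By AM--GM the left side is $\ge 2\mu\|v\|\sqrt{\langle v,Hv\rangle}$, while Cauchy--Schwarz in the $H$-inner product gives $\langle v,g\rangle^2\le \langle v,Hv\rangle\langle g,H^{-1}g\rangle$. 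Property~$(\alpha)$ of Definition~\ref{def:barrier} is equivalent to $\langle g,H^{-1}g\rangle\le \vartheta$. Combining the three inequalities and cancelling $\langle v,Hv\rangle$ yields $\|v\|\le\sqrt{\vartheta}/(2\mu)$, hence $\|\partial_\mu p(z,\mu)\|\le\sqrt{\vartheta}$ for every $z$ and every $\mu>0$. Integrating in $\mu$ and using continuity at $\mu=0$ produces the uniform-in-$z$ Lipschitz bound $\|p(z,\mu_1)-p(z,\mu_2)\|\le\sqrt{\vartheta}\,|\mu_1-\mu_2|$, from which uniform convergence $p(\cdot,\mu)\convu\Pi_X$ follows at once.

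The main obstacle I expect is the cluster-point argument in the third paragraph: when $\Pi_X(z)\in\partial X$ the barrier forces $p(z,\mu)$ to remain strictly interior while the quadratic term drags it toward the boundary, so reconciling the two requires simultaneous control of both terms together with density of $\inter(X)$ in $X$. The self-concordance Lipschitz computation is essentially algebraic once the AM--GM/Cauchy--Schwarz pairing is set up, but it is where the magic constant $\sqrt{\vartheta}$ arises and so deserves careful bookkeeping.
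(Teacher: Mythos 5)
The paper does not actually prove this statement — it is imported verbatim from [Chua\_Hien, Theorems 3.1--3.2] — so there is no in-paper argument to compare against; judged on its own, your proof is correct and follows the route one would expect the cited source to take (variational characterization of $p(z,\mu)$, implicit function theorem for smoothness, a cluster-point argument for $p(z,\mu)\to\Pi_X(z)$, and the Newton-decrement bound to get the modulus $\sqrt{\vartheta}$). A few spots deserve tightening. First, you use convexity of $f$ implicitly throughout (strict convexity of $\phi_z$, coercivity via an affine minorant of $f$, monotonicity of $\nabla f$ for uniqueness); the paper's Definition~\ref{def:barrier}(a) does not state convexity, but condition $(\alpha)$ forces $\nabla^2 f\succeq 0$ and the cited framework assumes convex barriers, so this is a matter of stating the hypothesis, not a flaw. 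Second, in the Lipschitz computation the step ``$(\alpha)$ is equivalent to $\langle g,H^{-1}g\rangle\le\vartheta$'' presumes $H=\nabla^2 f(p)$ nonsingular, which can fail (e.g.\ a half-space barrier); the inverse-free form of $(\alpha)$, namely $\langle g,h\rangle^2\le\vartheta\,\langle h,Hh\rangle$ for all $h$, gives exactly what you need without Cauchy--Schwarz in the $H$-inner product, and the degenerate case $\langle v,Hv\rangle=0$ (where you cancel it) should be noted separately — there $(\alpha)$ forces $\langle g,v\rangle=0$, hence $v=0$ and the bound is trivial. Third, boundedness of $\{p(z,\mu)\}$ as $\mu\downarrow 0$ is where the affine minorant is genuinely needed (otherwise $\mu^2 f(p)$ could be very negative), and only $\liminf_{\mu\downarrow0}\mu^2 f(p(z,\mu))\ge 0$ is required, which is cleaner than asserting $\mu^2 f(p(z,\mu))\to 0$ at that stage. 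Finally, the paper's definition of a smoothing approximation asks for continuity of $p$ on $\mathbb{E}\times\mathbb{R}_+$, i.e.\ joint continuity at points $(z,0)$, not just pointwise convergence for fixed $z$; this follows at once from your own IFT formula, since $\derD p_\mu(z)=(\mathbf{I}+\mu^2\nabla^2 f(p))^{-1}$ shows $p(\cdot,\mu)$ is $1$-Lipschitz in $z$, but it is worth the one extra line.
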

\begin{definition} 
\label{def:smoothing}
The barrier based smoothing approximation of Euclidean projection $\Pi_X$ defined by a given twice continuously differentiable barrier $f$ on $X$ is the map $p:\mathbb{E} \times\mathbb{R} \rightarrow \mathbb{E}$ that satisfies \eqref{eq:bsa}.
\end{definition}
We remind that $p_\mu(z)=p(z,\mu)$. The following proposition provides an upper bound for  $\norm{ \derD^2 p_\mu(z)}$ which is of paramount importance in achieving the global linear convergence of our inexact non-interior continuation method  proposed in Section \ref{sec:method_describe}.
\begin{proposition} \label{prop:bsa_property}
 If the barrier $f$ of $X$ is $\vartheta$-self-concordant, then for all $z\in \mathbb{E}$ and $\mu>0$ we have   
 $$\norm{ \derD^2 p_\mu(z)} \leq \frac{1}{4\mu}.$$
\end{proposition}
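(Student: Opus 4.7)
The plan is to compute $\derD^2 p_\mu(z)$ explicitly by implicit differentiation of \eqref{eq:bsa}, then bound the resulting trilinear form using the self-concordance inequality of Proposition \ref{prop:barrier_property}, and finally reduce the remaining estimates to a one-dimensional spectral calculation.

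First I would differentiate the defining relation $p_\mu(z) + \mu^2 \nabla f(p_\mu(z)) = z$ once in $z$ to obtain
\[
\bigl(I + \mu^2\, \nabla^2 f(p_\mu(z))\bigr)\,\derD p_\mu(z) = I,
\]
so, writing $H := \nabla^2 f(p_\mu(z))$, we have $\derD p_\mu(z) = (I+\mu^2 H)^{-1}$, which is a symmetric positive definite operator because $H$ is (as $f$ is a barrier, hence strictly convex, and self-concordance gives three-times differentiability). A second differentiation in direction $k$ yields
\[
(I+\mu^2 H)\,\derD^2 p_\mu(z)[\,\cdot\,,k] \;=\; -\mu^2\,\derD H(p_\mu(z))\bigl[\derD p_\mu(z)[k]\bigr]\circ \derD p_\mu(z),
\]
so that for arbitrary $h,k,w \in \mathbb{E}$, using that $(I+\mu^2 H)^{-1}$ is self-adjoint,
\[
\bigl\langle w, \derD^2 p_\mu(z)[h,k]\bigr\rangle \;=\; -\mu^2\, \derD^3 f(p_\mu(z))[u,v,\tilde w],
\]
where $u := \derD p_\mu(z)[h] = (I+\mu^2 H)^{-1}h$, $v := \derD p_\mu(z)[k] = (I+\mu^2 H)^{-1}k$, and $\tilde w := (I+\mu^2 H)^{-1}w$.

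Next I would invoke Proposition \ref{prop:barrier_property} to obtain
\[
|\derD^3 f(p_\mu(z))[u,v,\tilde w]| \;\le\; 2\sqrt{\langle u,Hu\rangle\, \langle v,Hv\rangle\, \langle \tilde w, H\tilde w\rangle}\,.
\]
The core remaining estimate is that for every $h\in\mathbb{E}$,
\[
\langle (I+\mu^2 H)^{-1}h,\, H\,(I+\mu^2 H)^{-1}h\rangle \;\le\; \frac{\|h\|^2}{4\mu^2}.
\]
This reduces to the scalar inequality $\max_{\lambda\ge 0} \lambda/(1+\mu^2\lambda)^2 = 1/(4\mu^2)$, attained at $\lambda=1/\mu^2$, applied on the spectrum of the symmetric PSD operator $H$. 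Applying this bound to each of the three factors $\langle u,Hu\rangle$, $\langle v,Hv\rangle$, $\langle \tilde w, H\tilde w\rangle$ gives
\[
\bigl|\langle w, \derD^2 p_\mu(z)[h,k]\rangle\bigr| \;\le\; 2\mu^2 \cdot \frac{\|h\|\,\|k\|\,\|w\|}{(2\mu)^3} \;=\; \frac{\|h\|\,\|k\|\,\|w\|}{4\mu}.
\]
Taking the supremum over unit vectors $h,k,w$ yields the claimed bound $\|\derD^2 p_\mu(z)\|\le 1/(4\mu)$.

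The only non-routine step is the second differentiation and the careful identification of the resulting expression with a symmetric trilinear form in $u$, $v$, $\tilde w$ so that Proposition \ref{prop:barrier_property} applies directly; once this is set up, the self-concordance bound and the elementary spectral maximization do the rest.
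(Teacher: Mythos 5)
Your proof is correct and follows essentially the same route as the paper's: differentiate the defining relation \eqref{eq:bsa} twice, express $\langle w,\derD^2 p_\mu(z)[h,k]\rangle$ as $-\mu^2\,\derD^3 f(p_\mu(z))$ applied to $\derD p_\mu(z)$-transformed directions (using self-adjointness of $(I+\mu^2 H)^{-1}$), invoke Proposition \ref{prop:barrier_property}, and finish with the spectral bound $\mu^2\lambda/(1+\mu^2\lambda)^2\le 1/4$, which is exactly the paper's eigendecomposition argument. The only cosmetic differences are notational (your intermediate formula for the second derivative and the unnecessary claim that $H$ is positive definite, where positive semidefiniteness suffices), so nothing needs to change.
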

\begin{proof}
Denote $g(z)=\nabla f(z)$. From the definition of barrier-based smoothing approximation, we have
$$ p_\mu(z)+ \mu^2g(p_\mu(z))=z.
$$
Taking derivative on $z$ both sides, we get
\begin{equation} 
\label{eq:derofp}
\derD p_\mu(z)[u] + \mu^2\derD g(p_\mu(z))[\derD p_\mu(z)[u]]=u,
\end{equation}
for all $u\in \mathbb{E}$. 
 Taking derivative again on both sides of \eqref{eq:derofp} gives us
\begin{align*}
 \derD^2 p_\mu(z)[u,v] + \mu^2\derD^2 g(p_\mu(z))[\derD p_\mu(z)u, \derD p_\mu(z)v]  + \mu^2 \derD g(p_\mu(z))\derD^2 p_\mu(z)[u,v]=0,
 \end{align*}
 for all $u,v\in \mathbb{E}$.
Therefore, we get 
\begin{equation}
\label{eq:normrho}\norm{\derD^2 p_\mu(z)[u,v]} = \norm{\mu^2\derD p_\mu(z)  \derD^2 g(p_\mu(z))\big[ \derD p_\mu(z)u,\derD p_\mu(z)v]}.
\end{equation}
Denote $\rho=\mu^2 \derD p_\mu(z)  \derD^2 g(p_\mu(z))\big[ \derD p_\mu(z)u,\derD p_\mu(z)v]$. Now we prove that $\norm{\rho}\leq \frac{1}{4\mu}$.  We remind that $g=\nabla f$. For all $w\in\mathbb{E}$ with $\norm{w}=1$, we have 
\[
\iprod{\rho}{w}=\mu^2\derD^3 f(p_\mu(z))\big[ \der p_\mu(z)u\;,\;\der p_\mu(z)v\;,\;\der p_\mu(z)w\big].
\]
Since $f$ is a $\vartheta$-self-concordant barrier, we get
\begin{equation} 
\label{temp0}
\begin{split}
&\mu^2\derD^3 f(p_\mu(z))\big[ \der p_\mu(z)u\;,\;\der p_\mu(z)v\;,\;\der p_\mu(z)w\big]
\\
&\leq 2\mu^2\prod\limits_{d\in\{u,v,w\}} \Big| \derD^2 f(p_\mu(z)) \big[\der p_\mu(z)d ,  \der p_\mu(z) d \big]\Big|^{1/2}.
\end{split}
\end{equation}
 Let $\nabla^2 f(p_\mu(z))= Q\; \Diag\left(\lambda_i|_{1\leq i\leq n} \right) Q^T$ be eigenvalue decomposition of $\nabla^2 f(p_\mu(z))$, whose eigenvalues are $\lambda_i, 1\leq i \leq n$. From \eqref{eq:derofp}, we have
$\der p_\mu(z)= Q \Diag \left( \left.\cfrac{1}{1+ \mu^2\lambda_i}\right|_{1\leq i \leq n}\right) Q^T.$
By noting that $\cfrac{\mu^2\lambda_i}{(1+ \mu^2\lambda_i)^2} \leq \cfrac14$, we imply that for $d\in\{u,v,w\}$ the following satisfies
\begin{align*}
\mu^2\Big| \derD^2 f(p_\mu(z)) \big[\der p_\mu(z)d,\der p_\mu(z) d \big]\Big| &= d^T Q \Diag \left(\left.\cfrac{\mu^2\lambda_i}{(1+ \mu^2\lambda_i)^2}\right|_{1\leq i \leq n}\right)Q^T d \\
&\leq \frac14\norm{d}^2\norm{Q}^2\leq \frac14.
\end{align*}
Together with \eqref{temp0}, we get
$\iprod{\rho}{w}\leq \cfrac{1}{4\mu}, \forall\; w\in\mathbb{E}, \norm{w}=1.$ Hence $\norm{\rho}=\max_{w:\norm{w}=1}\iprod{\rho}{w}\leq \frac{1}{4\mu}.$
We deduce from \eqref{eq:normrho} that for all $u,v\in\mathbb{E}$ with $\norm{(u,v)}=1$ we have $\norm{\derD^2 p_\mu(z)[u,v]}\leq \frac{1}{4\mu}$.  

~\QEDB
\end{proof} 
The following theorem connects some properties of the projection $\Pi_X(\cdot)$ with the behaviour of $\derD p_\mu(z)$ at its limit point (which is supposed to exist). Theorem \ref{theorem:seq_Jacob_conver} serves as a cornerstone to prove the local convergence of our algorithm. 
\begin{theorem} \label{theorem:seq_Jacob_conver}
If $ \lim\limits_{(z,\mu)\rightarrow (z^*,0)} \derD p_\mu(z)=T^*$, then the projector $\Pi_X$ is strictly differentiable at $z^*$ and  $\derD \Pi_X(z^*) = T^*$. Furthermore, if $\norm{\derD p_\mu(z)-T^*} =O(\|(z-z^*,\mu)\|^{\xi-1})$ with $\xi>1$ then $\Pi_X$  satisfies
$$\norm{\Pi_X(z) - \derD \Pi_X(z^*)- \Pi_X(z^*)} = O(\norm{z-z^*}^\xi).
$$
\end{theorem}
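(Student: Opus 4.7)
The plan is to transfer properties of the smooth approximants $p_\mu$ (which are $C^1$ for $\mu>0$) to the nondifferentiable limit $\Pi_X$ by first proving the analogous statements for $p_\mu$ uniformly in small $\mu$, then letting $\mu\downarrow 0$ using the pointwise convergence $p_\mu(z)\to\Pi_X(z)$ built into the definition of a smoothing approximation. The single identity driving everything is the fundamental theorem of calculus applied to $p_\mu$:
\begin{equation*}
p_\mu(z_1)-p_\mu(z_2)-T^*[z_1-z_2]=\int_0^1\bigl(\derD p_\mu(z_2+t(z_1-z_2))-T^*\bigr)[z_1-z_2]\,dt,
\end{equation*}
valid for any $z_1,z_2\in\mathbb{E}$ and any $\mu>0$.

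For the strict differentiability assertion, I would fix $\epsilon>0$ and use the limit hypothesis $\derD p_\mu(z)\to T^*$ to choose $\delta>0$ such that $\|\derD p_\mu(w)-T^*\|<\epsilon$ whenever $\|w-z^*\|<\delta$ and $0<\mu<\delta$. For any $z_1,z_2$ inside the ball of radius $\delta/2$ about $z^*$, every point on the segment $[z_2,z_1]$ lies within $\delta$ of $z^*$, so the integrand above is bounded in norm by $\epsilon\|z_1-z_2\|$; hence $\|p_\mu(z_1)-p_\mu(z_2)-T^*[z_1-z_2]\|\le\epsilon\|z_1-z_2\|$ uniformly in $\mu\in(0,\delta)$. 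Passing $\mu\downarrow 0$ via pointwise convergence yields the same bound with $p_\mu$ replaced by $\Pi_X$, and since $\epsilon$ was arbitrary this is exactly strict differentiability at $z^*$ with derivative $T^*$, so $\derD\Pi_X(z^*)=T^*$.

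For the quantitative estimate, I would apply the identity with $z_2=z^*$ and $z_1=z$, then insert the stronger assumption $\|\derD p_\mu(w)-T^*\|\le C\|(w-z^*,\mu)\|^{\xi-1}$ at $w=z^*+t(z-z^*)$. Using $\|(t(z-z^*),\mu)\|\le t\|z-z^*\|+\mu$ and integrating over $t\in[0,1]$ yields $\|p_\mu(z)-p_\mu(z^*)-T^*[z-z^*]\|\le C'\|z-z^*\|\,\max(\|z-z^*\|,\mu)^{\xi-1}$; letting $\mu\downarrow 0$ produces the desired bound $\|\Pi_X(z)-\Pi_X(z^*)-T^*[z-z^*]\|=O(\|z-z^*\|^\xi)$. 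There is no real obstacle: no smoothness of $\Pi_X$ is assumed at any intermediate stage, and the only point that requires care is making sure the right-hand bounds are $\mu$-independent before taking the limit, which is already the case above.
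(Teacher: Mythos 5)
Your proposal is correct and follows essentially the same route as the paper's proof: the same fundamental-theorem-of-calculus identity for $p_\mu$, the same uniform-in-$\mu$ estimate on a small ball obtained from the hypothesis on $\derD p_\mu$, and the same passage $\mu\downarrow 0$ using pointwise convergence $p_\mu\to\Pi_X$, both for strict differentiability and for the $\xi$-order estimate. The only differences are cosmetic (choice of $\delta/2$ versus $\delta/\sqrt{2}$, and writing the remainder as $\max(\|z-z^*\|,\mu)^{\xi-1}\|z-z^*\|$ instead of $O(\|h\|^{\xi})+O(\mu^{\xi-1})\|h\|$).
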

\begin{proof}
Since $ \lim\limits_{(z,\mu)\rightarrow (z^*,0)} \derD p_\mu(z)=T^*$, we have 
$$\forall \varepsilon>0,\; \exists \delta>0\; \text{such that} \, 0< \norm{ (z-z^*, \mu)} < \delta \Rightarrow \norm{\derD p_\mu(z) - T^*}< \varepsilon.$$
Hence, $\forall \varepsilon>0,\; \exists \delta>0,$ for any $z_1, z_2$ and $\mu$ such that $0<\norm{z_1-z^*} < \cfrac{\delta}{\sqrt2}$, $0<\norm{z_2-z^*} < \cfrac{\delta}{\sqrt2}$ and $\mu  < \cfrac{\delta}{\sqrt2}$, the following holds 
\begin{equation} \label{temp00}
\begin{split}
\norm{p_\mu(z_2) - p_\mu(z_1) - T^*[z_2-z_1]} &= \norm{\int\limits_{0}^1 (\derD p_\mu(t z_2 + (1-t)z_1) - T^*)dt [z_2-z_1]}\\
&< \varepsilon \norm{z_2-z_1},
\end{split}
\end{equation}
since $\norm{t z_2 + (1-t)z_1 - z^* } \leq t \norm{z_2-z^*} + (1-t)\norm{z_1-z^*} < \cfrac{\delta}{\sqrt2}$ for all $0\leq t\leq 1$. 
Moreover, if $\norm{\derD p_\mu(z)-T^*} =O(\|(z-z^*,\mu)\|^{\xi-1})$ then  $\forall \varepsilon>0, \exists \delta>0,$ for  $0<\norm{z-z^*} < \cfrac{\delta}{\sqrt2}$ and $\mu  < \cfrac{\delta}{\sqrt2}$, we get
\begin{equation} \label{temp01}
\begin{split}
\norm{p_\mu(z^*+h) - p_\mu(z^*) - T^*[h]} & = \norm{\int\limits_{0}^1 (\derD p_\mu(z^* + th) - T^*)dt [h]}\\
&= O(\|h\|^\xi) + O(\mu^{\xi-1})\|h\|.
\end{split}
\end{equation} 
As $\mu$ in \eqref{temp00} goes to 0, accompanied with $p_\mu(z_2)\rightarrow \Pi_X(z_2)$,  $p_\mu(z_1)\rightarrow \Pi_X(z_1)$, it yields that
$$\norm{\Pi_X(z_2) - \Pi_X(z_1) - T^*[z_2-z_1]} \leq \varepsilon \|z_2-z_1\|.
$$
This expression shows that the projection $\Pi_X$ is strictly differentiable at $z^*$ and $\derD \Pi_X(z^*) = T^*$. Similarly, as $\mu$ in \eqref{temp01} goes to 0, the second result follows
$$\norm{\Pi_X(z^*+h) - \Pi_X(z^*) - T^*[h]} = O(\|h\|^\xi). 
$$ \QEDB
\end{proof}
\begin{remark}
The inverse direction of Theorem \ref{theorem:seq_Jacob_conver} does not always hold. If we use a wrong barrier function for the set $X$, the limit may not exist even when $\Pi_X$ is differentiable at $z^*$; see an example in  \ref{example} . However, the question about the existence of a suitable barrier-based smoothing approximation which may guarantee the limit for a given general convex set $X$ is still open to us.  
\end{remark}
\section{An inexact non-interior continuation method} \label{sec:method_describe}
In this section, we describe the inexact non-interior continuation for solving Problem $\eqref{eq:compl_equation}$ and prove its local superlinear ($\xi$- order) and global linear convergence. We use a $\vartheta$-self-concordant barrier $f$  to formulate $p_\mu(\cdot)$. Denote $\phi_\mu(x,y) = x- p_\mu(x-y)$ and remind that $H_{\mu}(x,y)=\begin{pmatrix}
x-p_\mu(x-y)\\
F(x) - y
\end{pmatrix}.$
 We use the merit function 
$$\Psi_\mu(x,y)=\| F(x)-y\| + \| \phi_\mu(x,y)\|,$$ and define the neighbourhood
$\mathcal{N}(\beta,\mu)=\{ (x,y): \Psi_{\mu}(x,y) \leq \beta\mu\}.$
Algorithm \ref{alg:IeNICM} fully describes our algorithm.  

\begin{algorithm}[H] 
\caption{Inexact non-interior continuation method}
\label{alg:IeNICM}
\begin{algorithmic}
\STATE Given $\sigma,\alpha_1, \alpha_2,\alpha_3 \in (0,1)$, two sequences $\left\{\theta_1^{(k)}\right\}_{k\geq 0}\subset (0,1)$ and $\left\{\theta_2^{(k)}\right\}_{k\geq 0}\subset (0,1)$ such that $\sigma+\sqrt2  \sup_{k}\left\{\theta_1^{(k)}\right\} < 1$. Denote $w^{(k)}=(x^{(k)},y^{(k)})$.
\STATE \textbf{ Step 0} Set $k=0$. Choose $\mu_0 >0, w^{(0)} \in \mathbb{E}\times \mathbb{E}, \beta > \sqrt{\vartheta}$ such that $w^{(0)} \in \mathcal{N}(\beta,\mu_0)$.
\STATE \textbf{ Step 1} (Calculate centering step)
\IF{$H_0(w^{(k)})=0$} 
\STATE terminate, $w^{(k)}$ is a solution of the VI;
\ELSIF{$\Psi_{\mu_k}(w^{(k)})=0$} \STATE  set $\tilde{w}^{(k+1)}=w^{(k)}$ and go to step 3;
\ELSE \STATE let $\triangle \tilde{w}^{(k)}$ solve the equation
\begin{align} \label{eq:Newton_direction}
-H_{\mu_k}(w^{(k)})+r_1^{(k)}= \derD H_{\mu_k}(w^{(k)})[\triangle \tilde{w}^{(k)}],
\end{align}
where $\norm{r_1^{(k)}}\leq \theta_1^{(k)} \norm{H_{\mu_k}(w^{(k)})}$.
\ENDIF
\STATE \textbf{ Step 2} (Line search for centering step)
Let $\lambda_k$ be the maximum of $1, \alpha_1,\alpha_1^2,\ldots$ such that
\begin{align} \label{ieq1:line_search}
\Psi_{\mu_k}(w^{(k)} + \lambda_k \triangle \tilde{w}^{(k)}) \leq (1-\sigma \lambda_k) \Psi_{\mu_k}(w^{(k)}).
\end{align}
\STATE Set $\tilde{w}^{(k+1)}=w^{(k)} + \lambda_k \triangle \tilde{w}^{(k)}.$
\STATE \textbf{ Step 3} ($\mu$ Reduction based on centering step)
Let $\gamma_k$ be the maximum of the values $1, \alpha_2, \alpha_2^2, \ldots$ such that
$$\tilde{w}^{(k+1)} \in \mathcal{N}(\beta,(1-\gamma_k)\mu_k).$$
\STATE Set $\tilde{\mu}_{k+1} = (1-\gamma_k) \mu_k$.
\STATE \textbf{ Step 4} (Calculate approximate Newton step)
 Let $\triangle \hat{w}^{(k)}$ solve the equation
\begin{align} \label{eq:pure_Newton_direction}
-H_0(w^{(k)}) + r_2^{(k)}= \derD H_{\mu_k}(w^{(k)})[\triangle \hat{w}^{(k)}],
\end{align}
where $\norm{r_2^{(k)}} \leq \theta_2^{(k)} \norm{H_0(w^{(k)})}$.  
\STATE Set $\hat{w}^{(k+1)} = w^{(k)} + \triangle\hat{w}^{(k)}.$
\STATE \textbf{ Step 5} ($\mu$ reduction based on approximate Newton step)
\IF{$\hat{w}^{(k+1)} \not\in \mathcal{N}(\beta,\tilde{\mu}_{k+1})$} \STATE set
$\mu_{k+1}=\tilde{\mu}_{k+1}, w^{(k+1)} = \tilde{w}^{(k+1)}, k=k+1$
and return to step 1; 
\ELSIF{$H_0(\hat{w}^{(k+1)})=0$}
\STATE terminate, $\hat{w}^{(k+1)}$  is a solution of the VI; 
\ELSE \STATE let $\eta_k$ be the greatest value of $1, \alpha_3, \alpha_3^2, \ldots$ such that
$\hat{w}^{(k+1)} \not\in \mathcal{N}(\beta,\eta_k\alpha_3 \tilde{\mu}_{k+1}).$
\STATE Set
$\mu_{k+1}=\eta_k \tilde{\mu}_{k+1}, w^{(k+1)}=\hat{w}^{(k+1)}, k=k+1,$
and return to step 1.
\ENDIF
\end{algorithmic}
\end{algorithm}
 The algorithm starts with Step 0 which can be easily initialized by choosing arbitrary $w^{(0)}\in \mathbb{E}\times \mathbb{E}$, $\mu_0>0$ and $\beta>
\max \{\sqrt{\vartheta},\Psi_{\mu_0}(w^{(0)})/\mu_0\}$.  Centering steps 1--3 are crucial to obtain the global convergence rate, while approximate Newton steps 4--5 are necessary to obtain the local convergence rate. As proved in Theorem \ref{theorem:sp_linear_conver}, when $k$ is sufficiently large, Algorithm \ref{alg:IeNICM} updates $w^{(k+1)}=\hat{w}^{(k+1)}$ eventually. Newton equations \eqref{eq:Newton_direction} and \eqref{eq:pure_Newton_direction} of centering steps and approximate Newton steps respectively are solved inexactly. Parameters $\theta_1^{(k)}$ and $\theta_2^{(k)}$ are to control the level of accuracy in solving the Newton equations. As mentioned in the introduction, we remark that finding the inexact solutions satisfying \eqref{eq:Newton_direction} and \eqref{eq:pure_Newton_direction} does not require computing the explicit form of the Hessian matrix $\derD H_{\mu_k}$ and its inverse; instead we can use some Hessian-free Newton type methods or apply some iteration solvers to find the inexact Newton directions.  
\subsection{Global linear convergence}
We first list assumptions that will be used in sequel.  
\begin{assumption}\label{assump:uniform_bounded}
The derivative $\derD H_{\mu_k}(w^{(k)})$ is nonsingular and there exists a constant $C$ such that $\norm{\derD H_{\mu_k}(w^{(k)})^{-1}} \leq  C$ for all $k$.
\end{assumption}
\begin{assumption} \label{assump:F}
We have 
$$\|F(y)-F(x) - \derD F(x)[y-x] \| =o (\|y-x\|) \;\text{for all}\; x,y \in \bbE.$$
\end{assumption}
\begin{assumption} \label{assump:F2}
There exist constant  $\xi>1$ and $L>0$ such that
$$\|F(y)-F(x) - \derD F(x)[y-x] \| \leq L\|y-x\|^{\xi} \;\text{for all}\; x,y \in \bbE$$
\end{assumption}

As mentioned in introduction section, we will prove in Section \ref{subsec:sufficient conds} that if $F$ is monotone then $\derD H_{\mu_k}(w^{(k)})$ is nonsingular. Furthermore, we will extend the sufficient condition obtained in \cite{Chen_Tseng} for the uniform boundedness of $\norm{\derD H_{\mu_k}(w^{(k)})^{-1}}$ to general convex set, and refer the readers to \cite{Bintong,Chua_Hien} for more relaxed conditions that guarantee the boundedness when $X$ is a non-negative orthant, an epigraph of operator norm or an epigraph of nuclear norm.  Assumption \ref{assump:F2} is typical in global convergence analysis of non-interior continuation methods, see e.g., \cite[Proposition 1]{Chen_Tseng}, \cite[Assumption 2]{Bintong}. Assumption \ref{assump:F} will be used to prove the local superlinear convergence, and Assumption \ref{assump:F2} will be used to prove the local $\xi$-order convergence of our proposed method. Assumption \ref{assump:F} is satisfied if Assumption \ref{assump:F2} is satisfied. 
 
Proposition \ref{prop:Newton_direction_bounded} provides bounds for Newton directions; see its proof in \ref{proof1}.
\begin{proposition} \label{prop:Newton_direction_bounded}
Let $w^{(k)}$ be the $k$-th iterate of the Algorithm 1, $\triangle \tilde{w}^{(k)}$ be the solution of \eqref{eq:Newton_direction} and $\triangle \hat{w}^{(k)}$ be the solution of \eqref{eq:pure_Newton_direction}. If  Assumption \ref{assump:uniform_bounded} holds true, then
\begin{itemize}
\item[(i)] $\norm{\triangle \tilde{w}^{(k)}} \leq C(1+\theta_1) \Psi_{\mu_k}(w^{(k)})\leq C(1+\theta_1)\beta \mu_k$, where $\theta_1 = \sup \theta_1^{(k)}$,
\item[(ii)]  $\norm{\triangle \hat{w}^{(k)}} \leq C(1+\theta_2)(\beta + \sqrt{\vartheta}) \mu_k$, where $\theta_2 = \sup \theta_2^{(k)}$.
\end{itemize}
\end{proposition}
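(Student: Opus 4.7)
The plan is to invert the Jacobian equations defining both Newton directions, apply the uniform bound from Assumption \ref{assump:uniform_bounded} together with the inexactness bound on the residuals $r_1^{(k)}$ and $r_2^{(k)}$, and then close the gap between $\|H_{\mu_k}(w^{(k)})\|$ (or $\|H_0(w^{(k)})\|$) and the merit function $\Psi_{\mu_k}(w^{(k)})$ which controls the neighbourhood $\mathcal{N}(\beta,\mu_k)$.

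For part (i), I would first rewrite \eqref{eq:Newton_direction} as $\triangle\tilde{w}^{(k)} = \derD H_{\mu_k}(w^{(k)})^{-1}[-H_{\mu_k}(w^{(k)})+r_1^{(k)}]$, take norms, use Assumption \ref{assump:uniform_bounded} to get a factor of $C$, and the triangle inequality plus $\|r_1^{(k)}\|\le \theta_1^{(k)}\|H_{\mu_k}(w^{(k)})\|$ to obtain $\|\triangle\tilde{w}^{(k)}\|\le C(1+\theta_1^{(k)})\|H_{\mu_k}(w^{(k)})\|$. The elementary inequality $\sqrt{a^2+b^2}\le a+b$ for $a,b\ge0$, applied with $a=\|\phi_{\mu_k}(w^{(k)})\|$ and $b=\|F(x^{(k)})-y^{(k)}\|$, gives $\|H_{\mu_k}(w^{(k)})\|\le \Psi_{\mu_k}(w^{(k)})$. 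Finally, $w^{(k)}\in\mathcal{N}(\beta,\mu_k)$ yields $\Psi_{\mu_k}(w^{(k)})\le\beta\mu_k$.

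For part (ii), the analogous inversion of \eqref{eq:pure_Newton_direction} produces $\|\triangle\hat{w}^{(k)}\|\le C(1+\theta_2^{(k)})\|H_0(w^{(k)})\|$. The key additional input is an a priori bound on $\|H_0(w^{(k)})\|$ in terms of $\mu_k$. Since $H_0$ and $H_{\mu_k}$ differ only in the first block (by $p_{\mu_k}(x^{(k)}-y^{(k)})-\Pi_X(x^{(k)}-y^{(k)})$), Theorem \ref{theorem:bsa} gives $\sqrt{\vartheta}$-Lipschitz continuity of $p$ in the smoothing parameter, hence $\|H_0(w^{(k)})-H_{\mu_k}(w^{(k)})\|\le\sqrt{\vartheta}\,\mu_k$. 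Combining with $\|H_{\mu_k}(w^{(k)})\|\le\Psi_{\mu_k}(w^{(k)})\le\beta\mu_k$ via the triangle inequality produces $\|H_0(w^{(k)})\|\le(\beta+\sqrt{\vartheta})\mu_k$, and substituting back finishes the bound.

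The argument is essentially bookkeeping; there is no serious obstacle. The only nonroutine ingredient is remembering to invoke the uniform smoothing property (Theorem \ref{theorem:bsa}) to pass from $H_{\mu_k}$ to $H_0$ in part (ii); the rest is operator-norm inequalities and the definition of the neighbourhood. One small point worth being careful about: the bound $\|H_\mu\|\le\Psi_\mu$ relies on the Euclidean product norm on $\mathbb{E}\times\mathbb{E}$, which is the standing convention in the paper, so no additional constants appear.
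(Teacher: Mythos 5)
Your proposal is correct and follows essentially the same route as the paper: invert the Newton systems, use Assumption \ref{assump:uniform_bounded} with the residual bounds, note $\norm{H_{\mu_k}(w^{(k)})}\le\Psi_{\mu_k}(w^{(k)})\le\beta\mu_k$, and for (ii) pass from $H_{\mu_k}$ to $H_0$ via the $\sqrt{\vartheta}$-Lipschitz dependence of $p$ on $\mu$ from Theorem \ref{theorem:bsa}. The only cosmetic difference is that the paper compares $\norm{H_0(w^{(k)})}$ with $\Psi_{\mu_k}(w^{(k)})$ directly through $\norm{\phi_0}-\norm{\phi_{\mu_k}}$, while you bound $\norm{H_0-H_{\mu_k}}$ first; the two are equivalent.
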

We now apply Proposition \ref{prop:bsa_property} together with Proposition \ref{prop:Newton_direction_bounded} to bound the value of $H_\mu(x+\lambda \triangle \tilde{x},y+ \lambda \triangle \tilde{y})$.
\begin{proposition}
\label{prop:F_Phi_property}
Let $ 0\leq \lambda \leq 1$, $r_1=\begin{pmatrix} r_{1x}\\r_{1y} \end{pmatrix}$, $(\triangle \tilde{x}, \triangle \tilde{y})$ be the solution of \eqref{eq:Newton_direction}.
\begin{itemize}
\item[(i)] If the barrier $f(x)$ is $\vartheta$-self-concordant, then
 $$\| \phi_\mu(x+\lambda \triangle \tilde{x}, y+ \lambda \triangle \tilde{y})\| \leq (1-\lambda) \| \phi_\mu(x,y)\| + \cfrac{1}{4\mu} \lambda^2 \norm{ (\triangle \tilde{x},\triangle \tilde{y})}^2 + \lambda\norm{r_{1x}}.
 $$
\item[(ii)] If Assumption \ref{assump:uniform_bounded} holds and the map $F$ satisfies Assumption \ref{assump:F} then
$$\|F(x+\lambda \triangle \tilde{x})-( y+ \lambda \triangle \tilde{y})\| \leq (1-\lambda)\| F(x)-y\| + o(\lambda \| (\triangle \tilde{x},\triangle \tilde{y})\|) + \lambda\norm{r_{1y}}.
$$
\end{itemize}
\end{proposition}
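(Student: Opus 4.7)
Both parts follow the same template: a first-order Taylor expansion around $(x,y)$, substitution using the Newton equation \eqref{eq:Newton_direction}, and a separate bound on the remainder.

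\textbf{Part (i).} I would write $\phi_\mu = \mathbf I_1 - p_\mu \circ (\mathbf I_1 - \mathbf I_2)$ (where $\mathbf I_1,\mathbf I_2$ extract the first and second coordinate) and use the integral form of Taylor's theorem
\begin{equation*}
\phi_\mu(x+\lambda \triangle \tilde x, y+\lambda \triangle \tilde y)
= \phi_\mu(x,y) + \lambda\,\derD \phi_\mu(x,y)[\triangle \tilde w]
+ \int_0^\lambda (\lambda - s)\,\derD^2 \phi_\mu\bigl(x+s\triangle\tilde x,y+s\triangle\tilde y\bigr)[\triangle\tilde w,\triangle\tilde w]\,ds.
\end{equation*}
The first row of \eqref{eq:Newton_direction} reads $\derD \phi_\mu(x,y)[\triangle \tilde w] = -\phi_\mu(x,y) + r_{1x}$, so the affine part equals $(1-\lambda)\phi_\mu(x,y)+\lambda r_{1x}$. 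For the remainder I would use $\derD^2 \phi_\mu(x,y)[(u,v),(u,v)] = -\derD^2 p_\mu(x-y)[u-v,u-v]$ together with Proposition \ref{prop:bsa_property} to obtain $\|\derD^2 p_\mu(x-y)[u-v,u-v]\|\le \tfrac{1}{4\mu}\|u-v\|^2$, then bound $\|u-v\|^2 \le 2\|(u,v)\|^2$. After integrating, the factor $\int_0^\lambda(\lambda-s)\,ds = \lambda^2/2$ combines with $\tfrac{2}{4\mu}$ to give exactly $\tfrac{\lambda^2}{4\mu}\,\|(\triangle\tilde x,\triangle\tilde y)\|^2$. The triangle inequality, using $0\le\lambda\le 1$ so that $(1-\lambda)\ge 0$, finishes the proof.

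\textbf{Part (ii).} Here I would expand $F$ around $x$:
\begin{equation*}
F(x+\lambda\triangle\tilde x) - (y+\lambda\triangle\tilde y)
= (F(x)-y) + \lambda\bigl(\derD F(x)[\triangle\tilde x] - \triangle\tilde y\bigr)
+ \bigl[F(x+\lambda\triangle\tilde x) - F(x) - \lambda\derD F(x)[\triangle\tilde x]\bigr].
\end{equation*}
The second row of \eqref{eq:Newton_direction} yields $\derD F(x)[\triangle\tilde x] - \triangle\tilde y = -(F(x)-y) + r_{1y}$, which collapses the first two terms to $(1-\lambda)(F(x)-y) + \lambda r_{1y}$. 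Assumption \ref{assump:F} bounds the bracketed remainder by $o(\lambda\|\triangle\tilde x\|) = o(\lambda\|(\triangle\tilde x,\triangle\tilde y)\|)$. Assumption \ref{assump:uniform_bounded} is implicitly invoked to guarantee existence and boundedness of the Newton direction $\triangle\tilde w$ so that the $o(\cdot)$ estimate is meaningful. Applying the triangle inequality completes the argument.

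\textbf{Expected obstacle.} The only delicate point is matching the constant $\tfrac{1}{4\mu}$ (rather than $\tfrac{1}{2\mu}$) in (i). This requires noticing that the factor of $2$ picked up when passing from $\|u-v\|^2$ to $\|(u,v)\|^2$ is exactly cancelled by the $\tfrac{1}{2}$ in $\int_0^\lambda(\lambda-s)\,ds$. Part (ii) is a routine consequence of the defining Newton equation and Assumption \ref{assump:F}, with no hidden subtleties.
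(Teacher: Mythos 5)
Your proposal is correct and follows essentially the same route as the paper: substitute the first (resp.\ second) row of the Newton equation \eqref{eq:Newton_direction} into a first-order Taylor expansion so the affine part collapses to $(1-\lambda)\phi_\mu(x,y)+\lambda r_{1x}$ (resp.\ $(1-\lambda)(F(x)-y)+\lambda r_{1y}$), and bound the second-order remainder via Proposition \ref{prop:bsa_property} (resp.\ Assumption \ref{assump:F}), with the constant $\frac{1}{4\mu}$ arising exactly as you describe from the cancellation of the factor $2$ in $\norm{\triangle\tilde{x}-\triangle\tilde{y}}^2\le 2\norm{(\triangle\tilde{x},\triangle\tilde{y})}^2$ against the $\frac12$ from the remainder. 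The only cosmetic difference is that you use the integral form of the Taylor remainder for the joint map $\phi_\mu$, whereas the paper expands $p_\mu$ in the single variable $z=x-y$ and controls the vector-valued Lagrange remainder by pairing with an arbitrary unit vector; both give the identical estimate.
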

\begin{proof}
(i) From Equation \eqref{eq:Newton_direction}, we get
$$\begin{array}{ll}
x-p_\mu(x-y)&=-(\mathbf{I} - \derD p_\mu(x-y))[\triangle \tilde{x}] - \derD p_\mu(x-y)[\triangle \tilde{y}] + r_{1x}\\
&=-\bracket{\triangle \tilde{x} - \derD p_\mu(x-y)[\triangle \tilde{x}-\triangle \tilde{y}] - r_{1x}}.
\end{array}$$
 Therefore,
\begin{align}
\label{temp1}
\begin{split}
 &\phi_\mu(x+\lambda \triangle \tilde{x}, y+ \lambda \triangle \tilde{y}) - (1-\lambda) \phi_\mu(x,y)\\
  & = x + \lambda \triangle \tilde{x} -p_\mu\big( x-y +\lambda(\triangle \tilde{x}-\triangle \tilde{y})\big) - (x-p_\mu(x-y))- \\ 
  & \qquad \qquad \lambda\big(\triangle \tilde{x} - \derD p_\mu(x-y)[\triangle \tilde{x}-\triangle \tilde{y}]- r_{1x}\big)\\
   & =-p_\mu\big(x-y +\lambda(\triangle \tilde{x}-\triangle \tilde{y})\big) + p_\mu(x-y) + \lambda \derD p_\mu(x-y)[\triangle \tilde{x}-\triangle \tilde{y}]  + \lambda r_{1x}.
\end{split}
\end{align}
Denote $\tilde{\theta}=p_\mu\big(x-y +\lambda(\triangle \tilde{x}-\triangle \tilde{y})\big) - p_\mu(x-y) - \lambda \derD p_\mu(x-y)[\triangle \tilde{x}-\triangle \tilde{y}]$. Using Lagrange's remainder for first order Taylor polynomial,  for all $v\in \mathbb{E}$ we have $t\in (0,1)$ such that 

\begin{align*}
\iprod{v}{\tilde{\theta}}
&=\iprod{v}{\frac12 \derD^2 p_\mu\bracket{x-y+t\lambda \bracket{\triangle \tilde{x}-\triangle \tilde{y}}}\left[\lambda (\triangle \tilde{x}-\triangle \tilde{y}), \lambda (\triangle \tilde{x}-\triangle \tilde{y}) \right]}\\
&\leq \frac{1}{8\mu} \lambda^2 \norm{v} \norm{\triangle \tilde{x}-\triangle \tilde{y}}^2,
\end{align*}
where we have applied Proposition \ref{prop:bsa_property} for the last inequality. Hence
\[\norm{\tilde{\theta}}^2=\iprod{\tilde{\theta}}{\tilde{\theta}}\leq \frac{1}{8\mu} \lambda^2 \norm{\tilde{\theta}} \norm{\triangle \tilde{x}-\triangle \tilde{y}}^2,
\]
which implies $\norm{\tilde{\theta}}\leq \frac{1}{8\mu} \lambda^2 \norm{\triangle \tilde{x}-\triangle \tilde{y}}^2\leq \frac{1}{4\mu} \lambda^2 \norm{(\triangle \tilde{x},\triangle \tilde{y})}^2$. 
Together with \eqref{temp1}, we get (i). 

(ii)  From Equation \eqref{eq:Newton_direction}, we have
$F(x)-y= - \derD F(x)[\triangle \tilde{x}] + \triangle \tilde{y} + r_{1y}.$
This equality together with Assumption \ref{assump:F} yields
\begin{align*}
\begin{split}
 &F(x+\lambda \triangle \tilde{x})- ( y+ \lambda \triangle \tilde{y}) - (1-\lambda) (F(x)-y)\\
 &\qquad=F(x+\lambda \triangle \tilde{x}) - ( y+ \lambda \triangle \tilde{y}) - (F(x)-y) +\lambda(- \derD F(x)[\triangle \tilde{x}] + \triangle \tilde{y} + r_{1y} )\\
 &\qquad = F(x+\lambda \triangle \tilde{x}) -  \derD F(x)[\lambda \triangle \tilde{x}] - F(x) + \lambda r_{1y}\\
 &\qquad = o(\lambda \norm{\triangle \tilde{x}} ) + \lambda r_{1y}\\
 &\qquad = o(\lambda  \| (\triangle \tilde{x},\triangle \tilde{y})\|) + \lambda r_{1y}.
\end{split}
\end{align*} \QEDB
\end{proof}
Finally, we establish the positive lower bounds for $\lambda_k$ and $\gamma_k$ that are necessary to prove the global convergence in Theorem \ref{global}.
\begin{proposition} 
\label{prop:lamda_gamma_property} Let $0\leq \lambda\leq 1$. We consider $\lambda_k$ and $\gamma_k$ in Step 2 and Step 3 of Algorithm \ref{alg:IeNICM} respectively. If Assumptions \ref{assump:uniform_bounded} and \ref{assump:F} are satisfied, then
\begin{itemize}
\item[(i)] There exists $\bar{\lambda}$ such that $\lambda_k \geq \alpha_1\bar{\lambda}$, and 
\item[(ii)] $\gamma_k \geq \alpha_2 \bar{\gamma}$ where $\bar{\gamma}=\min\left\{1, \cfrac{\beta \sigma\bar{\lambda}}{\beta+\sqrt{\vartheta}}\right\}.$
\end{itemize}
\end{proposition}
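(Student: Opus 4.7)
The plan is to use Proposition \ref{prop:F_Phi_property} in combination with the bound on the Newton direction from Proposition \ref{prop:Newton_direction_bounded}(i) to derive a quantitative upper bound on $\Psi_{\mu_k}(w^{(k)}+\lambda\triangle\tilde w^{(k)})$ that is linear in $\lambda$ up to a controllable remainder. Adding the two inequalities of Proposition \ref{prop:F_Phi_property} and noting that $\|r_{1x}\|+\|r_{1y}\|\le\sqrt{2}\|r_1^{(k)}\|\le\sqrt{2}\,\theta_1^{(k)}\|H_{\mu_k}(w^{(k)})\|\le\sqrt{2}\,\theta_1^{(k)}\Psi_{\mu_k}(w^{(k)})$, I would obtain
\[
\Psi_{\mu_k}(w^{(k)}+\lambda\triangle\tilde w^{(k)})\le (1-\lambda)\Psi_{\mu_k}(w^{(k)})+\tfrac{\lambda^2}{4\mu_k}\|\triangle\tilde w^{(k)}\|^2+o(\lambda\|\triangle\tilde w^{(k)}\|)+\sqrt{2}\,\lambda\theta_1^{(k)}\Psi_{\mu_k}(w^{(k)}).
\]
Invoking Proposition \ref{prop:Newton_direction_bounded}(i) twice, in the forms $\|\triangle\tilde w^{(k)}\|\le C(1+\theta_1)\Psi_{\mu_k}(w^{(k)})$ and $\|\triangle\tilde w^{(k)}\|\le C(1+\theta_1)\beta\mu_k$, gives $\tfrac{\lambda^2}{4\mu_k}\|\triangle\tilde w^{(k)}\|^2\le \tfrac{\lambda^2 C^2(1+\theta_1)^2\beta}{4}\Psi_{\mu_k}(w^{(k)})$.

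Plugging this in yields
\[
\Psi_{\mu_k}(w^{(k)}+\lambda\triangle\tilde w^{(k)})\le\Big(1-\lambda\Big[1-\sqrt{2}\,\theta_1-\tfrac{C^2(1+\theta_1)^2\beta\lambda}{4}-\varepsilon(\lambda)\Big]\Big)\Psi_{\mu_k}(w^{(k)}),
\]
where $\varepsilon(\lambda)\to0$ as $\lambda\to0$. Since $\sigma+\sqrt{2}\theta_1<1$ by hypothesis and $\{\mu_k\}$ is non-increasing (so $\{\|\triangle\tilde w^{(k)}\|\}$ is uniformly bounded), there exists a $k$-independent $\bar\lambda>0$ such that the bracketed quantity is $\ge\sigma$ for every $\lambda\in(0,\bar\lambda]$, giving the Armijo condition \eqref{ieq1:line_search}. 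The backtracking rule then forces $\lambda_k\ge\alpha_1\bar\lambda$; the case $\Psi_{\mu_k}(w^{(k)})=0$ is trivial since then $\tilde w^{(k+1)}=w^{(k)}$. The main obstacle here is making the $o$-term uniform in $k$: it is essential that $F\in C^1$ (standing hypothesis on the VI) combined with the uniform bound $\|\triangle\tilde w^{(k)}\|\le C(1+\theta_1)\beta\mu_0$ allows the Taylor remainder of $F$ to be controlled by a single function $\varepsilon(\lambda)$ independent of $k$.

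For part (ii), I would exploit the Lipschitz continuity of $p$ in $\mu$ with modulus $\sqrt{\vartheta}$ from Theorem \ref{theorem:bsa}, which yields $\|\phi_{(1-\gamma)\mu_k}(\tilde w^{(k+1)})-\phi_{\mu_k}(\tilde w^{(k+1)})\|\le\sqrt{\vartheta}\,\gamma\mu_k$, hence $\Psi_{(1-\gamma)\mu_k}(\tilde w^{(k+1)})\le\Psi_{\mu_k}(\tilde w^{(k+1)})+\sqrt{\vartheta}\,\gamma\mu_k$. Combining this with the Armijo bound from Step 2 and $w^{(k)}\in\mathcal N(\beta,\mu_k)$ gives $\Psi_{(1-\gamma)\mu_k}(\tilde w^{(k+1)})\le(1-\sigma\lambda_k)\beta\mu_k+\sqrt{\vartheta}\,\gamma\mu_k$. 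Requiring this to be at most $\beta(1-\gamma)\mu_k$ is equivalent to $\gamma(\beta+\sqrt{\vartheta})\le\sigma\lambda_k\beta$, i.e.\ $\gamma\le\frac{\sigma\lambda_k\beta}{\beta+\sqrt{\vartheta}}$. Using $\lambda_k\ge\alpha_1\bar\lambda$ from (i), every $\gamma\in(0,\min\{1,\frac{\beta\sigma\bar\lambda}{\beta+\sqrt{\vartheta}}\}]=(0,\bar\gamma]$ up to the factor $\alpha_1$ is admissible, so the backtracking on the grid $\{1,\alpha_2,\alpha_2^2,\dots\}$ necessarily selects $\gamma_k\ge\alpha_2\bar\gamma$, as claimed. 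The main work here is simply the book-keeping: separately verifying the trivial case $\Psi_{\mu_k}(w^{(k)})=0$ and confirming that the Lipschitz-in-$\mu$ estimate from Theorem \ref{theorem:bsa} applies pointwise in $(x,y)$.
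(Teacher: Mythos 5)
Your proposal is correct and follows essentially the same route as the paper: part (i) combines Proposition \ref{prop:F_Phi_property} with Proposition \ref{prop:Newton_direction_bounded}(i), bounds the residual by $\sqrt2\,\theta_1\Psi_{\mu_k}(w^{(k)})$, and uses the uniform bound $\Psi_{\mu_k}(w^{(k)})\le\beta\mu_0$ to make the little-$o$ term uniform in $k$ (which, as in the paper, really rests on Assumption \ref{assump:F} rather than mere continuity of $\derD F$), while part (ii) uses the $\sqrt{\vartheta}$-Lipschitz dependence of $p_\mu$ on $\mu$ from Theorem \ref{theorem:bsa} together with the Armijo decrement exactly as the paper does. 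The only loose point — that the line search guarantees the decrement factor $(1-\sigma\alpha_1\bar\lambda)$ rather than $(1-\sigma\bar\lambda)$, so the stated $\bar\gamma$ is off by a harmless factor of $\alpha_1$ unless one renames $\bar\lambda$ — is shared with the paper's own argument and does not affect the conclusion.
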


\begin{proof}
(i)
By Proposition \ref{prop:F_Phi_property}, we get
\begin{align*}
\Psi_{\mu_k}(w^{(k)} + \lambda \triangle \tilde{w}^{(k)})\leq (1-\lambda)\Psi_{\mu_k}(w^{(k)})+ o( \lambda \| \triangle \tilde{w}^{(k)}\|) + \frac{1}{4 \mu_k} \; \lambda^2  \| \triangle \tilde{w}^{(k)}\|^2+\lambda\sqrt2 \norm{r_1^{(k)}}.
\end{align*}
Using Proposition \ref{prop:Newton_direction_bounded}(i) and remind that $\norm{r_1^{(k)}}\leq \theta_1^{(k)} \norm{H_{\mu_k}\bracket{w^{(k)}}}$, we deduce that there exists a function $\varpi(\cdot)  $ such that $\lim_{x\to 0}\frac{\varpi(x)}{x}=0$ and 
\begin{align} 
\label{temp02}
\begin{split}
&\Psi_{\mu_k}(w^{(k)} + \lambda \triangle \tilde{w}^{(k)})\\
&\leq (1-\lambda)  \Psi_{\mu_k}(w^{(k)})+ \varpi( \lambda\Psi_{\mu_k}(w^{(k)})) +  \frac14 \lambda^2 C^2(1+\theta_1)^2\beta \Psi_{\mu_k}(w^{(k)})+ \lambda\sqrt2 \theta_1\Psi_{\mu_k}(w^{(k)}).
\end{split}
\end{align}
Note that $\Psi_{\mu_k}(w^{(k)}) \leq \beta\mu_k \leq \beta \mu_0$, i.e., $\Psi_{\mu_k}(w^{(k)})$ is bounded by $\beta \mu_0$. Hence, there exists $\bar{\lambda}\geq 0$ such that for all $0\leq \lambda\leq \bar{\lambda}$, we have 
$$\cfrac{\varpi( \lambda\Psi_{\mu_k}(w^{(k)}))}{\lambda\Psi_{\mu_k}(w^{(k)})} + \frac14 C^2 (1+\theta_1)^2 \beta \lambda\leq 1-\sigma -\sqrt2 \theta_1, 
$$
which together with \eqref{temp02} leads to $\Psi_{\mu_k}(w^{(k)} + \lambda \triangle \tilde{w}^{(k)}) \leq (1-\sigma\lambda)  \Psi_{\mu_k}(w^{(k)})$. Therefore, for all $ 0\leq \lambda \leq \bar{\lambda}$ the line search criteria \eqref{ieq1:line_search} for centering step  holds true.  Then we get (i). 

(ii) We note that
\begin{align*}
&\Psi_{(1-\gamma)\mu_k}(\tilde{w}^{(k+1)}) - \Psi_{\mu_k}(\tilde{w}^{(k+1)}) \\
&\qquad=  \norm{\phi_{(1-\gamma)\mu_k}(\tilde{w}^{(k+1)})} - \norm{\phi_{\mu_k}(\tilde{w}^{(k+1)})} \\
 &\qquad\leq \norm{ \phi_{(1-\gamma)\mu_k}(\tilde{w}^{(k+1)})- \phi_{\mu_k}(\tilde{w}^{(k+1)})}\\
 &\qquad=\| p_{(1-\gamma)\mu_k}(\tilde{x}^{(k+1)}-\tilde{y}^{(k+1)}) - p_{\mu_k}(\tilde{x}^{(k+1)}-\tilde{y}^{(k+1)})\|.
\end{align*}
Using the Lipschitz continuity of $p_\mu$ (see Theorem \ref{theorem:bsa}), we have
\begin{align}
\label{temp_02}
\begin{split}
\Psi_{(1-\gamma)\mu_k}(\tilde{w}^{(k+1)})\leq \Psi_{\mu_k}(\tilde{w}^{(k+1)}) +  \sqrt{\vartheta} \gamma\mu_k . 
\end{split}
\end{align}
On the other hand, as proved above, we have $\Psi_{\mu_k}(\tilde{w}^{(k+1)}) \leq (1-\sigma \bar{\lambda}) \Psi_{\mu_k}(w^{(k)})$. Hence, using the fact $\Psi_{\mu_k}(w^{(k)}) \leq \beta \mu_k $, we derive from \eqref{temp_02} that 
 $$\Psi_{(1-\gamma)\mu_k}(\tilde{w}^{(k+1)})\leq (1-\sigma \bar{\lambda}) \Psi_{\mu_k}(w^{(k)}) + \sqrt{\vartheta}\gamma \mu_k \leq \beta (1-\sigma \bar{\lambda} + \cfrac{\sqrt{\vartheta}\gamma}{\beta})\mu_k.$$
Finally, we get $\Psi_{(1-\gamma)\mu_k}(\tilde{w}^{(k+1)}) \leq \beta(1-\gamma)\mu_k$, i. e., $ \tilde{w}^{(k+1)} \in \mathcal{N}(\beta,(1-\gamma)\mu_k)$ for all $0\leq \gamma \leq \bar{\gamma}$. The result (ii) follows then. \QEDB
\end{proof}
\begin{remark}
If we assume Assumption \ref{assump:F2} holds, then we get the following inequality, which is stronger than the inequality  in Proposition \ref{prop:F_Phi_property}(ii),
\begin{align*}
\|F(x+\lambda \triangle \tilde{x})-( y+ \lambda \triangle \tilde{y})\| &\leq (1-\lambda)\norm{F(x)-y}+ L\lambda^\xi\norm{\triangle \tilde{x}}^\xi + \lambda\norm{r_{1y}}\\
&\leq (1-\lambda)\norm{F(x)-y}+L\lambda^\xi\norm{(\triangle \tilde{x},\triangle \tilde{y})}^\xi + \lambda\norm{r_{1y}}.
\end{align*}
Consequently, we obtain the following inequality which is stronger than \eqref{temp02}
\begin{align*}
&\Psi_{\mu_k}(w^{(k)} + \lambda \triangle \tilde{w}^{(k)})\\
&\,\leq 	(1-\lambda)\Psi_{\mu_k}(w^{(k)}) + \big(L \lambda^\xi(C(1+\theta_1)\beta\mu_k)^{\xi-1} \\
&\qquad + \frac14 \lambda^2 C(1+\theta_1) \beta\big) C(1+\theta_1) \Psi_{\mu_k}(w^{(k)}) + \lambda\sqrt2\theta_1\Psi_{\mu_k}(w^{(k)}) \\
&\,\leq (1-\lambda)\Psi_{\mu_k}(w^{(k)})\\
&\quad+ \lambda \left\{ \lambda^a(L(C(1+\theta_1)\beta\mu_0)^{\xi-1} + \frac14C(1+\theta_1)\beta)  C(1+\theta_1) + \sqrt2 \theta_1\right\}  \Psi_{\mu_k}(w^{(k)}),
\end{align*}
where $a=\min\{ \xi-1,1\}$. Hence the value of $\bar{\lambda}$ in Proposition \ref{prop:lamda_gamma_property} (i) is chosen to be 
$$\bar{\lambda}=\min\left\{1,\sqrt[a]{\frac{1-\sigma-\sqrt2 \theta_1}{C(1+\theta_1) \left(L(C(1+\theta_1)\beta\mu_0)^{\xi-1} + \frac14C(1+\theta_1)\beta\right)}} \right\}.$$ 
\end{remark}
The following theorem states the global linear convergence of our algorithm. Its proof is  similar to that of \cite[Theorem 1]{Bintong}; we hence omit the details here.
\begin{theorem} 
\label{global}
Assuming Assumption \ref{assump:uniform_bounded}
and Assumption \ref{assump:F} are satisfied, then we have
\begin{itemize}
\item[(i)] For all $k\geq 0$,   $\mu_{k+1}\leq (1 -\alpha_2\bar{\gamma}) \mu_k$ and $\mu_k \leq \mu_0(1-\alpha_2\bar{\gamma})^k$, where $\bar\gamma$ is defined in Proposition \ref{prop:lamda_gamma_property}.
\item[(ii)] The sequence $\left\{H_0(w^{(k)})\right\}_{k\geq 0}$ converges to 0 $R$-linearly.
\item[(iii)] The sequence $\left\{w^{(k)}\right\}_{k\geq 0}$ is a Cauchy sequence converging to a solution $w^*=(x^*,y^*)$ of the $VI$ \eqref{eq:compl_equation}.
\end{itemize}
\end{theorem}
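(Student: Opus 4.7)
The plan is to piece together Proposition \ref{prop:lamda_gamma_property}, Proposition \ref{prop:Newton_direction_bounded}, and the Lipschitz property of $p_\mu$ in $\mu$ from Theorem \ref{theorem:bsa}, all organized around the loop invariant $w^{(k)}\in\mathcal{N}(\beta,\mu_k)$ that is built into Algorithm \ref{alg:IeNICM}.

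First I would verify the invariant $w^{(k)}\in\mathcal{N}(\beta,\mu_k)$ by induction on $k$. Step 0 supplies the base case. Step 3 ensures $\tilde{w}^{(k+1)}\in\mathcal{N}(\beta,\tilde{\mu}_{k+1})$ by construction of $\gamma_k$. In the first branch of Step 5, $w^{(k+1)}=\tilde{w}^{(k+1)}$ and $\mu_{k+1}=\tilde\mu_{k+1}$, so the invariant is immediate. In the second branch, $w^{(k+1)}=\hat{w}^{(k+1)}$ and $\mu_{k+1}=\eta_k\tilde{\mu}_{k+1}$; if $\eta_k<1$ then $\eta_k/\alpha_3$ is still in the discrete grid, so maximality of $\eta_k$ forces $\hat w^{(k+1)}\in\mathcal{N}(\beta,(\eta_k/\alpha_3)\cdot\alpha_3\tilde\mu_{k+1})=\mathcal{N}(\beta,\mu_{k+1})$, while if $\eta_k=1$ the invariant follows directly from the if-test that guards the branch. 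Once the invariant is in hand, part (i) is immediate: both branches of Step 5 yield $\mu_{k+1}\leq\tilde\mu_{k+1}=(1-\gamma_k)\mu_k\leq(1-\alpha_2\bar\gamma)\mu_k$ by Proposition \ref{prop:lamda_gamma_property}(ii), and iterating produces the geometric bound on $\mu_k$.

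For (ii), the invariant supplies $\|\phi_{\mu_k}(w^{(k)})\|+\|F(x^{(k)})-y^{(k)}\|\leq\beta\mu_k$, and Theorem \ref{theorem:bsa} gives $\|p_{\mu_k}(z)-\Pi_X(z)\|\leq\sqrt\vartheta\,\mu_k$ uniformly in $z$. Consequently
\begin{align*}
\|H_0(w^{(k)})\|
&\leq\|x^{(k)}-\Pi_X(x^{(k)}-y^{(k)})\|+\|F(x^{(k)})-y^{(k)}\|\\
&\leq\|\phi_{\mu_k}(w^{(k)})\|+\sqrt\vartheta\,\mu_k+\|F(x^{(k)})-y^{(k)}\|\leq(\beta+\sqrt\vartheta)\mu_k,
\end{align*}
so R-linear convergence of $H_0(w^{(k)})$ to $0$ follows from part (i).

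For (iii), Proposition \ref{prop:Newton_direction_bounded} gives $\|\triangle\tilde w^{(k)}\|,\|\triangle\hat w^{(k)}\|\leq C_0\mu_k$ for a uniform constant $C_0$. Since $w^{(k+1)}-w^{(k)}$ is either $\lambda_k\triangle\tilde w^{(k)}$ with $\lambda_k\leq1$ or $\triangle\hat w^{(k)}$, we obtain $\|w^{(k+1)}-w^{(k)}\|\leq C_0\mu_k$. Telescoping with the geometric decay from (i) yields $\sum_{k\geq 0}\|w^{(k+1)}-w^{(k)}\|\leq C_0\mu_0\sum_{k\geq 0}(1-\alpha_2\bar\gamma)^k<\infty$, so $\{w^{(k)}\}$ is Cauchy; denote its limit by $w^*$. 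Continuity of $H_0$ together with part (ii) forces $H_0(w^*)=0$, which by the characterization recalled in the introduction identifies $w^*$ as a solution of the VI \eqref{eq:compl_equation}. The only mildly delicate step is the Step 5 case analysis needed to propagate the invariant; everything else is a direct consequence of estimates already established.
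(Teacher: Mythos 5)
Your proof is correct, and it is essentially the argument the paper intends: the paper omits the proof and defers to Chen--Xiu's Theorem 1, whose structure is exactly your combination of the neighbourhood invariant $w^{(k)}\in\mathcal{N}(\beta,\mu_k)$, the lower bound $\gamma_k\geq\alpha_2\bar\gamma$ from Proposition \ref{prop:lamda_gamma_property}, the bound $\norm{H_0(w^{(k)})}\leq(\beta+\sqrt{\vartheta})\mu_k$ via the $\sqrt{\vartheta}$-Lipschitz property of $p_\mu$ in $\mu$ (as in \eqref{temp3}), and the step-size bounds of Proposition \ref{prop:Newton_direction_bounded} to get the telescoping Cauchy estimate. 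Your Step 5 case analysis propagating the invariant (including the maximality argument for $\eta_k$) is the right and complete way to handle the only delicate point.
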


\subsection{Local superlinear/$\xi$-order convergence}
\label{subsec:localrate}
We denote $z^{(k)}=x^{(k)} - y^{(k)}$  and $z^*=x^*-y^*$, where $(x^*,y^*)$ is any limit point of the sequence $\left\{ \bracket{x^{(k)},y^{(k)}}\right\}_{k\geq 0}$. We now use Theorem \ref{theorem:seq_Jacob_conver} to establish the local convergence of Algorithm \ref{alg:IeNICM}. 
We will see in Section \ref{sec:application} that the condition $\lim_{(z,\mu)\to (z^*,0)} \derD p_\mu(z)=T^* $ is satisfied in many specific convex sets. 
\begin{theorem}
\label{theorem:sp_linear_conver}
Suppose Assumption \ref{assump:uniform_bounded} and Assumption \ref{assump:F} hold. If the derivative  $\derD p_\mu(z)$ converges to a linear operator $T^*$ when $(z,\mu)$ goes to $(z^*,0)$, Algorithm \ref{alg:IeNICM} generates infinite sequence $\left\{w^{(k)}\right\}_{k\geq 0}$ and $r_2^{(k)} = o\lrpar{\norm{H_0(w^{(k)})}}$, then the sequence $\left\{w^{(k)}\right\}_{k\geq 0}$ converges superlinearly to $(x^*,y^*)$.  

Moreover, if $\norm{\derD p_\mu(z) -T^*} = O\bracket{\norm{(z-z^*,\mu)}^{\xi-1}}$ and $r_2^{(k)} = O(\norm{H_0(w^{(k)})} ^{\xi})$ with $\xi >1$ as given in Assumption \ref{assump:F2}, then the convergence is of $\xi$-order.
\end{theorem}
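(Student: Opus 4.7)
My plan is to establish a limiting Jacobian for $H_0$ at $w^*$ via Theorem \ref{theorem:seq_Jacob_conver}, run a standard inexact-Newton analysis on the approximate Newton step of Steps 4--5, and then argue that this step is eventually accepted by the test in Step 5. Global linear convergence already gives $\mu_k\to 0$ and $w^{(k)}\to w^*$, so $(z^{(k)},\mu_k)\to(z^*,0)$. The hypothesis $\derD p_\mu(z)\to T^*$ at $(z^*,0)$, together with Theorem \ref{theorem:seq_Jacob_conver}, yields strict differentiability of $\Pi_X$ at $z^*$ with $\derD \Pi_X(z^*)=T^*$ and, under the stronger hypothesis, the sharper bound $\|\Pi_X(z)-\Pi_X(z^*)-T^*[z-z^*]\|=O(\|z-z^*\|^{\xi})$. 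I would therefore introduce the block operator
\[
J^*[u,v]:=\bigl(u-T^*[u-v],\ \derD F(x^*)[u]-v\bigr),
\]
and observe that $\derD H_{\mu_k}(w^{(k)})\to J^*$ by the assumed limit together with continuity of $\derD F$. Assumption \ref{assump:uniform_bounded} then forces $J^*$ invertible with $\derD H_{\mu_k}(w^{(k)})^{-1}\to(J^*)^{-1}$.

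Next, combining strict differentiability of $\Pi_X$ at $z^*$ with Assumption \ref{assump:F} (resp.\ \ref{assump:F2}) gives the pointwise linearization
\[
\|H_0(w)-J^*(w-w^*)\|=o(\|w-w^*\|)\quad\bigl(\text{resp.}\ O(\|w-w^*\|^{\xi})\bigr).
\]
Invertibility of $J^*$ then gives $\|w^{(k)}-w^*\|=O(\|H_0(w^{(k)})\|)$; combined with $\|H_0(w^{(k)})\|\leq \Psi_{\mu_k}(w^{(k)})+\sqrt{\vartheta}\mu_k\leq(\beta+\sqrt{\vartheta})\mu_k$ (from the Lipschitz continuity of $p_\mu$ in $\mu$, Theorem \ref{theorem:bsa}, and $w^{(k)}\in\mathcal{N}(\beta,\mu_k)$), this yields $\|w^{(k)}-w^*\|=O(\mu_k)$. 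Substituting $\triangle\hat w^{(k)}=\derD H_{\mu_k}(w^{(k)})^{-1}(-H_0(w^{(k)})+r_2^{(k)})$ into the identity
\begin{equation*}
\hat w^{(k+1)}-w^* = \bigl(I-\derD H_{\mu_k}(w^{(k)})^{-1}J^*\bigr)(w^{(k)}-w^*) + \derD H_{\mu_k}(w^{(k)})^{-1}\bigl(J^*(w^{(k)}-w^*)-H_0(w^{(k)})+r_2^{(k)}\bigr),
\end{equation*}
and bounding each summand separately (first by $o(1)\cdot\|w^{(k)}-w^*\|$ since $\derD H_{\mu_k}(w^{(k)})^{-1}J^*\to\mathbf I$; second by the linearization; third by the residual hypothesis on $r_2^{(k)}$) yields $\|\hat w^{(k+1)}-w^*\|=o(\|w^{(k)}-w^*\|)$ (respectively $O(\|w^{(k)}-w^*\|^{\xi})$).

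The main obstacle is to verify that the acceptance test $\Psi_{\tilde\mu_{k+1}}(\hat w^{(k+1)})\leq\beta\tilde\mu_{k+1}$ in Step 5 eventually holds, so that $w^{(k+1)}=\hat w^{(k+1)}$ and the above rate transfers to the actual iterates. The elementary bound $\Psi_\mu(w)\leq C\|H_0(w)\|+\sqrt{\vartheta}\mu$ (again from Theorem \ref{theorem:bsa}) reduces the test to $C\|H_0(\hat w^{(k+1)})\|\leq(\beta-\sqrt{\vartheta})\tilde\mu_{k+1}$; the left side is $o(\mu_k)$ by the preceding paragraph. To control $\tilde\mu_{k+1}$ from below I would use the reverse inequality $\Psi_{(1-\gamma)\mu_k}(\tilde w^{(k+1)})\geq\|H_0(\tilde w^{(k+1)})\|-\sqrt{\vartheta}(1-\gamma)\mu_k$, which combined with validity of $\gamma_k$ in Step 3 forces $\tilde\mu_{k+1}\geq\|H_0(\tilde w^{(k+1)})\|/(\beta+\sqrt{\vartheta})$. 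A case distinction then closes the argument: either $\tilde\mu_{k+1}$ remains of order $\mu_k$, in which case acceptance is immediate for large $k$; or $\tilde\mu_{k+1}/\mu_k\to 0$, in which case the lower bound forces $\tilde w^{(k+1)}$ itself to be $o(\mu_k)$-close to $w^*$, so the linearization applied at $\tilde w^{(k+1)}$ recovers the required rate regardless of which branch Step 5 takes. Combining the two regimes gives the claimed superlinear (resp.\ $\xi$-order) convergence.
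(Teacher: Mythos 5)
Your superlinear argument is essentially the paper's in different clothing (the paper bounds $\derD H_{\mu_k}(w^{(k)})[w^{(k)}-w^*]-H_0(w^{(k)})+r_2^{(k)}$ directly instead of introducing $J^*$), but the $\xi$-order half has a genuine gap. Your first summand $(\mathbf I-\derD H_{\mu_k}(w^{(k)})^{-1}J^*)(w^{(k)}-w^*)$ is bounded only by $o(1)\cdot\norm{w^{(k)}-w^*}$, which yields superlinearity but not $\xi$-order; to get $O(\norm{w^{(k)}-w^*}^{\xi})$ you need $\norm{\derD H_{\mu_k}(w^{(k)})-J^*}=O(\norm{w^{(k)}-w^*}^{\xi-1})$, while the hypothesis only gives $\norm{\derD p_{\mu_k}(z^{(k)})-T^*}=O(\norm{(z^{(k)}-z^*,\mu_k)}^{\xi-1})$, which carries a $\mu_k^{\xi-1}$ contribution. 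The bound you derive, $\norm{w^{(k)}-w^*}=O(\mu_k)$, points the wrong way: what is needed is $\mu_k=O(\norm{w^{(k)}-w^*})$, and nothing in your outline delivers it. The paper obtains it from the structure of Step 5: once the approximate Newton step is accepted, $\eta_k$ is chosen so that $w^{(k+1)}=\hat w^{(k+1)}\notin\mathcal{N}(\beta,\alpha_3\mu_{k+1})$, whence for large $k$, $\beta\alpha_3\mu_k<\Psi_{\alpha_3\mu_k}(w^{(k)})\le\sqrt2\,\norm{H_0(w^{(k)})}+\alpha_3\sqrt{\vartheta}\,\mu_k$, so $\mu_k\le\sqrt2\,\norm{H_0(w^{(k)})}/\bigl(\alpha_3(\beta-\sqrt{\vartheta})\bigr)=O(\norm{w^{(k)}-w^*})$. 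Without this step (or an equivalent), the term $\mu_k^{\xi-1}\norm{w^{(k)}-w^*}$ cannot be absorbed and the $\xi$-order claim remains unproved. (Minor: with your splitting you also need $\norm{\derD F(x^{(k)})-\derD F(x^*)}=O(\norm{x^{(k)}-x^*}^{\xi-1})$, which follows from Assumption \ref{assump:F2} only via a short extra argument; the paper's grouping $F(x^{(k)})-F(x^*)-\derD F(x^{(k)})[x^{(k)}-x^*]$ invokes the assumption directly.)

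The acceptance argument also leans on a step that does not hold as stated. In your second regime ($\tilde\mu_{k+1}/\mu_k\to0$) you conclude that $\tilde w^{(k+1)}$ is $o(\mu_k)$-close to $w^*$ and that this ``recovers the required rate''; but the rate must be measured against $\norm{w^{(k)}-w^*}$, and since you only have $\norm{w^{(k)}-w^*}=O(\mu_k)$ with no matching lower bound, $o(\mu_k)$ does not imply $o(\norm{w^{(k)}-w^*})$. The clean repair --- and what the paper does --- is to observe that this regime is vacuous: if $\gamma_k=1$ the algorithm terminates finitely, contradicting the assumed infinite sequence, so $\gamma_k\le\alpha_2$ and $\tilde\mu_{k+1}=(1-\gamma_k)\mu_k\ge(1-\alpha_2)\mu_k$; your first-case computation ($\norm{H_0(\hat w^{(k+1)})}=o(\mu_k)$ against a right-hand side of order $(\beta-\sqrt{\vartheta})(1-\alpha_2)\mu_k$, using $\beta>\sqrt{\vartheta}$) then gives eventual acceptance, exactly as in the paper's proof.
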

\begin{proof}
By Theorem \ref{theorem:seq_Jacob_conver}, we can deduce  that $T^*=\derD \Pi_X(z^*)$.

First, we prove that $\norm{ \hat{w}^{(k+1)} - w^*} = o\bracket{\norm{w^{(k)}-w^*}}$. By Equation \eqref{eq:pure_Newton_direction}, 
\begin{align*}
\begin{split}
&\qquad\norm{w^{(k)} + \triangle \hat{w}^{(k)} - w^*}\\
 &\qquad=  \norm{w^{(k)}- \derD H_{\mu_k}(w^{(k)})^{-1} (H_0(w^{(k)})-r_2^{(k)} )-w^* }\\
&\qquad \leq \norm{ \derD H_{\mu_k}(w^{(k)})^{-1} }  \norm{ \derD H_{\mu_k}(w^{(k)})[w^{(k)}-w^*] -H_0(w^{(k)}) + r_2^{(k)}}\\
& \qquad\leq C\bracket{q_1 + q_2+\norm{r_2^{(k)}}},
\end{split}
\end{align*}
where
\begin{align}
\label{r_1}
\begin{split}
q_1 & =\norm{\derD F(x^{(k)})[x^{(k)}-x^*] - [y^{(k)}-y^*] - (F(x^{(k)})-y^{(k)})}\\
 &= \norm{ F(x^{(k)}) - \derD F(x^{(k)})[x^{(k)}-x^*] - F(x^*)} \\
 &= o\bracket{\norm{ x^{(k)}-x^*}} \quad \{\text{\textit{ by Assumption}}\; \ref{assump:F}  \} \\
 &= o\bracket{\norm{w^{(k)}-w^*}},
\end{split}
\end{align}
and
\begin{align}
\label{r_2}
\begin{split}
q_2 &= \norm{\big( \mathbf{I} - \derD p_{\mu_k}(z^{(k)})\big)[x^{(k)}-x^*] + \derD p_{\mu_k}(z^{(k)})[ y^{(k)}-y^*] - \big( x^{(k)}- \Pi_K(z^{(k)})\big)}\\
&= \norm{\Pi_K( z^{(k)}) - \Pi_K(z^*)- \derD p_{\mu_k}(z^{(k)})[x^{(k)} - x^* - (y^{(k)}-y^*)]} \\
& \leq \norm{\Pi_K( z^{(k)})- \derD \Pi_K(z^*)[z^{(k)} - z^*]-\Pi_K(z^*) }\\
&\qquad+ \norm{ \big(\derD \Pi_K(z^*) -  \derD p_{\mu_k}(z^{(k)})\big) [z^{(k)} - z^*]}.
\end{split}
\end{align}
By Theorem \ref{theorem:seq_Jacob_conver}, we get $q_2 =   o\bracket{\norm{z^{(k)} - z^* } } =o\bracket{\norm{w^{(k)}-w^*}}$. Together with $r_2^{(k)} = o\bracket{\norm{H_0(w^{(k)})}}$, we deduce that
$ \norm{ w^{(k)}+ \triangle \hat{w}- w^*} =o \bracket{\norm{w^{(k)}-w^*}}.$ This implies
\begin{align} \label{eq:smallO}
\norm{ w^{(k)}+ \triangle \hat{w}- w^*} = \tau_k  \norm{ w^{(k)} -  w^*},
\end{align}
where $ \tau_k$ is a sequence converging to 0. Furthermore,
$$
\norm{w^{(k)}-w^*}=\norm{ w^{(k)}+\triangle \hat{w} -w^* - \triangle \hat{w}} \leq \norm{ w^{(k)}+\triangle \hat{w}-w^*} +\norm{\triangle \hat{w}}.
$$
Hence, $\norm{ w^{(k)}-w^*}\leq  \tau_k\norm{w^{(k)}-w^*}+  \norm{\triangle \hat{w}}$. 
Applying Proposition \ref{prop:Newton_direction_bounded} (ii), we have
\begin{align*}
\norm{ w^{(k)}-w^*}& \leq  \tau_k \norm{ w^{(k)}-w^*}+  C(1+\theta_2)(\beta + \sqrt{\vartheta}) \mu_k \\
&\leq \cfrac12 \norm{ w^{(k)}-w^*}+  C(1+\theta_2)(\beta + \sqrt{\vartheta}) \mu_k 
\end{align*}
 for sufficiently large $k$. Hence for sufficiently large $k$ the following inequality is satisfied
\begin{align} \label{temp4}
 \| w^{(k)}-w^*\| \leq 2  C(1+\theta_2)(\beta + \sqrt{\vartheta}) \mu_k.
\end{align}
Now we prove that  $\hat{w}^{(k+1)} \in \mathcal{N}(\beta,(1-\gamma_k)\mu_k)$ for sufficiently large $k$, then $w^{(k)} = \hat{w}^{(k)}$ eventually. Using the property $\sqrt2\norm{(a,b)} \geq \norm{a} + \norm{b}$, similarly to \eqref{temp3} we can  prove
\begin{align} \label{temp50}
\Psi_{(1-\gamma_k)\mu_k}\bracket{\hat{w}^{(k+1)}} \leq  \sqrt 2 \norm{H_0(\hat{w}^{(k+1)})} + (1-\gamma_k)\mu_k \sqrt{\vartheta}.
\end{align}
Using the Lipschitz continuity of $H_0(w)$ near $w^*$ with Lipschitz constant $L_1$,  Expression \eqref{eq:smallO} and Inequality \eqref{temp4} we then get
\begin{align*}
\begin{split}
&\Psi_{(1-\gamma_k)\mu_k}(\hat{w}^{(k+1)})\\
& \leq \sqrt2 L_1 \norm{\hat{w}^{(k+1)} - w^*} +  (1-\gamma_k)\mu_k \sqrt{\vartheta}\\
&= \sqrt2 L_1  \tau_k  \norm{w^{(k)} -  w^*} +  (1-\gamma_k)\mu_k \sqrt{\vartheta}\\
 & \leq \sqrt2 L_1 \tau_k 2 C(1+\theta_2) (\beta + \sqrt{\vartheta})  \mu_k +   (1-\gamma_k)\mu_k \beta - (1-\gamma_k)\mu_k(\beta -\sqrt{\vartheta})\\
 &\leq  (1-\gamma_k)\mu_k \beta + \mu_k ( 2\sqrt2 L_1 \tau_k C(1+\theta_2)(\beta + \sqrt{\vartheta})- (1-\alpha_2)(\beta-\sqrt{\vartheta}) )
\end{split}
\end{align*}
where we have used the fact $\gamma_k \leq \alpha_2$ from Step 3 of Algorithm \ref{alg:IeNICM} (if $\gamma_k=1$ then the algorithm terminates finitely). Finally, for sufficiently large $k$  the value $2\sqrt2 L_1 \tau_k C(1+\theta_2)(\beta + \sqrt{\vartheta})- (1-\alpha_2)(\beta-\sqrt{\vartheta})$ is negative number since $\tau_k \rightarrow 0$ and $\beta>\sqrt{\vartheta}$ was chosen; we hence get $$ \Psi_{(1-\gamma_k)\mu_k}(\hat{w}^{(k+1)})\leq \beta  (1-\gamma_k)\mu_k.$$

For locally $\xi$-order convergence, we use the result of \cite[Lemma 7]{Bintong} which states that if the algorithm generates the $k$-th iterate by the approximate Newton step then $\mu_k=O(\|w^{(k)}-w^*\|)$. Indeed, based on the approximate Newton step, we have $w^{(k)} \not\in \mathcal{N}(\beta,\alpha_3\mu_k).$ Moreover, similarly to \eqref{temp50}, we get $\Psi_{\alpha_3 \mu_k}(w^{(k)}) \leq \sqrt 2 \norm{H_0({w}^{(k)})} + \alpha_3 \mu_k \sqrt{\vartheta}$. Hence 
$$\beta \alpha_3 \mu_k < \Psi_{\alpha_3 \mu_k}(w^{(k)}) \leq \sqrt 2 \norm{H_0({w}^{(k)})} + \alpha_3 \mu_k\sqrt{\vartheta}.$$
This implies 
$$\mu_k<\cfrac{\sqrt2\norm{H_0({w}^{(k)})}}{\alpha_3 (\beta-\sqrt{\vartheta})}\leq \cfrac{\sqrt2 L_1\norm{{w}^{(k)}-w^*}}{\alpha_3 (\beta-\sqrt{\vartheta})}.
$$
Appealing to this fact, if $\norm{\derD p_\mu(z) -T^*} = O(\|(z-z^*,\mu)\|^{\xi-1})$ then from Theorem \ref{theorem:seq_Jacob_conver} and Expression \eqref{r_2} we yield $q_2= O(\|w^{(k)}-w^*\|^\xi)$. From Expression \eqref{r_1} and Assumption \ref{assump:F2}, we have $q_1=O(\norm{w^{(k)}-w^*}^\xi)$. All together with $\norm{r_2^{(k)}} =  O(\|w^{(k)}-w^*\|^\xi)$, we deduce 
$ \|\hat{w}^{(k+1)} - w^*\| = O(\|w^{(k)}-w^*\|^\xi).$ \QEDB
\end{proof}
\subsection{Sufficient conditions for Assumption \ref{assump:uniform_bounded}}
Assumption \ref{assump:uniform_bounded} is crucial in obtaining the global linear convergence of our algorithm. The following proposition provides some sufficient conditions that make Assumption \ref{assump:uniform_bounded} satisfied for general closed convex sets. 
\label{subsec:sufficient conds}
\begin{proposition}  \label{prop:strmonotone}
(i) If $F$ is monotone then $\derD H_\mu(x,y)$ is nonsingular for all $\mu>0$ and $ (x,y)\in \mathbb{E}\times \mathbb{E}$. 

(ii) If $F$ is strongly monotone and assume that $\left\{\derD F\lrpar{x^{(k)}}\right\}_{k\geq 0}$ is  bounded, 
then  Assumption \ref{assump:uniform_bounded} holds.
\end{proposition}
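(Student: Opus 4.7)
The plan is to exploit the explicit form $\derD p_\mu(z) = (\mathbf{I}+M)^{-1}$, where $M := \mu^2 \nabla^2 f(p_\mu(z))$. This follows by differentiating the defining relation $p_\mu(z)+\mu^2 \nabla f(p_\mu(z))=z$, exactly as in the proof of Proposition \ref{prop:bsa_property}. Since $f$ is a self-concordant barrier, $M \succeq 0$, and $M \succ 0$ on the interior of $X$ where $p_\mu(z)$ lives whenever $\mu>0$. Writing $T:=\derD p_\mu(x-y)$, we have the block form
\[
\derD H_\mu(x,y) \;=\; \begin{pmatrix} \mathbf{I}-T & T \\ \derD F(x) & -\mathbf{I} \end{pmatrix}.
\]

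For (i), I would take $(u,v) \in \ker \derD H_\mu(x,y)$. The second block row gives $v=\derD F(x)u$; substituting into the first block row and left-multiplying by $T^{-1}=\mathbf{I}+M$ collapses the system to $(M+\derD F(x))\,u = 0$. Pairing with $u$ and using that both $M \succeq 0$ and $\iprod{\derD F(x)u}{u}\geq 0$ (the latter by monotonicity of $F$, via $\lim_{t\downarrow 0} \iprod{F(x+tu)-F(x)}{u}/t$), both terms must vanish. Positive definiteness of $\nabla^2 f$ on the interior of $X$ then forces $u=0$, and consequently $v=0$.

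For (ii), I would invert the block matrix via the Schur complement of the $-\mathbf{I}$ block. The Schur complement is $S=(\mathbf{I}-T)+T\,\derD F(x)$; using the identity $(\mathbf{I}+M)(\mathbf{I}-T)=M$ one gets $S=T(M+\derD F(x))$, and hence
\[
\derD H_\mu(x,y)^{-1} \;=\; \begin{pmatrix} S^{-1} & S^{-1}T \\ \derD F(x)S^{-1} & -\mathbf{I}+\derD F(x)S^{-1}T \end{pmatrix},
\]
with the simplifications $S^{-1}T=(M+\derD F(x))^{-1}$ and $S^{-1}=(M+\derD F(x))^{-1}(\mathbf{I}+M)$. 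Strong monotonicity with modulus $\mu_0>0$ combined with $M\succeq 0$ yields $\norm{(M+\derD F(x))^{-1}}\leq 1/\mu_0$, which directly bounds the $(1,2)$, $(2,1)$, and $(2,2)$ blocks in terms of $\mu_0$ and $\norm{\derD F(x)}$.

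The delicate block is $(1,1)$, because of the potentially unbounded factor $\mathbf{I}+M$ (the Hessian of $f$ can blow up as $p_\mu$ approaches $\partial X$). The key algebraic trick is the rewrite
\[
(M+\derD F(x))^{-1}(\mathbf{I}+M) \;=\; \mathbf{I} - (M+\derD F(x))^{-1}(\derD F(x)-\mathbf{I}),
\]
obtained by splitting $\mathbf{I}+M = (M+\derD F(x))-(\derD F(x)-\mathbf{I})$. This cancels the unbounded $M$ entirely, yielding $\norm{S^{-1}} \leq 1 + (\norm{\derD F(x)}+1)/\mu_0$. Together with the standing hypothesis that $\{\norm{\derD F(x^{(k)})}\}_{k\ge 0}$ is bounded, this gives a uniform bound on $\norm{\derD H_{\mu_k}(w^{(k)})^{-1}}$. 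The main obstacle is exactly this $(1,1)$ block; the above identity is the whole point of the argument.
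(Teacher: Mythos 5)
Your proposal is correct. For part (i) it is essentially the paper's argument in different clothing: both reduce the kernel system to positive definiteness of $\mu^2\nabla^2 f(p_\mu(x-y))+\derD F(x)$ — the paper conjugates the reduced operator by $\bar{\derD}=\derD p_\mu(x-y)$ and writes it as $(\mathbf{I}-\bar{\derD})\bar{\derD}+\bar{\derD}\,\derD F(x)\,\bar{\derD}$, while you left-multiply by $\mathbf{I}+M$ — and both lean on the same nondegeneracy of the barrier Hessian (the paper asserts $\bar{\derD}\prec\mathbf{I}$, you invoke $\nabla^2 f\succ 0$; these are equivalent, so you are no worse off than the paper). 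For part (ii) your route is genuinely different. The paper establishes $\norm{\bigl(\mathbf{I}-\bar{\derD}+\derD F(x^{(k)})^T\bar{\derD}\bigr)u}\ge \frac{\varrho}{1+m_F+\varrho}\norm{u}$ by a two-case analysis according to whether $\norm{\bar{\derD}u}$ is comparable to $\norm{u}$, transfers the bound to $\mathbf{I}-\bar{\derD}+\bar{\derD}\,\derD F$ via invariance of the smallest singular value under transposition, and then bounds $u$ and $v$ from the reduced system. You instead invert $\derD H_\mu$ explicitly through the Schur complement of the $-\mathbf{I}$ block, use $S=T(M+\derD F(x))$ together with the coercivity bound $\norm{(M+\derD F(x))^{-1}}\le 1/\varrho$, and neutralize the unbounded factor $\mathbf{I}+M$ in $S^{-1}=(M+\derD F(x))^{-1}(\mathbf{I}+M)$ by the splitting $\mathbf{I}+M=(M+\derD F(x))-(\derD F(x)-\mathbf{I})$. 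This attacks exactly the same difficulty that the paper's case analysis handles (possible degeneracy of $\bar{\derD}$, i.e.\ blow-up of the Hessian near the boundary), but it does so in one algebraic line, dispenses with the transpose/singular-value step, and yields explicit bounds on every block of the inverse, e.g.\ $\norm{S^{-1}}\le 1+(m_F+1)/\varrho$, of the same order as the paper's constant $(1+m_F+\varrho)/\varrho$. Two cosmetic remarks: your symbol $\mu_0$ for the strong-monotonicity modulus collides with the paper's initial smoothing parameter, and the $(2,1)$ block $\derD F(x)S^{-1}$ is controlled by the $S^{-1}$ bound rather than by $\norm{(M+\derD F(x))^{-1}}$ alone — which your final paragraph does supply, so no gap results.
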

\begin{proof}
(i)  We remind that  $\der p_\mu(z)=[I+\mu^2  \nabla^2 f(p(z))]^{-1}$ (see Formula \eqref{eq:derofp}). We also note that $\derD p_\mu(z)u =\der p_\mu(z) u, \forall u\in \mathbb{E}$. Then it is easy to see that $\mathbf{0} \prec \derD p_\mu(z) \prec \mathbf{I}$ for all $z\in \mathbb{E}$. 
Denote $\derD p_\mu(x-y) = \bar{\derD}$. We have  
$\derD H_{\mu}(x,y)=\begin{pmatrix}
\mathbf{I}-\bar{\derD}  & \bar{\derD}  \\
\derD F(x) & -\mathbf{I}
\end{pmatrix}.$
The system of linear equations  $\derD H_{\mu}(x,y)[(u,v)]=0$ is equivalent to
$\left\{ \begin{array}{ll}
(\mathbf{I}-\bar{\derD} )[u] + \bar{\derD}  [v]=0\\
\derD F(x)  [u]  - v=0,
\end{array}\right.
$
which can be rewritten as 
$\left\{ \begin{array}{ll}
(\mathbf{I}-\bar{\derD}  + \bar{\derD} (\derD F(x)))[u]=0\\
v=\derD F(x) [u].
\end{array}\right.
$
It follows from $\mathbf{0} \prec \bar{\derD}  \prec \mathbf{I} $  and monotonicity of $F$ that $ (\mathbf{I}-\bar{\derD} )\bar{\derD} \succ \mathbf{0}$ and $\bar{\derD}  (\derD F(x) ) \bar{\derD} \succeq \mathbf{0}$. 
Therefore,
\begin{align*}
(\mathbf{I}-\bar{\derD}  + \bar{\derD} (\derD F(x)))[u]=0 &\Leftrightarrow (\mathbf{I}-\bar{\derD}  + \bar{\derD}  (\derD F(x))\bar{\derD}  \bar{\derD} ^{-1}[u]=0\\
&\Leftrightarrow  \big( (\mathbf{I}-\bar{\derD} )\bar{\derD}  + \bar{\derD}  (\derD F(x)) \bar{\derD}  \big)\bar{\derD} ^{-1} [u]=0\\
&\Leftrightarrow  \bar{\derD}^{-1}[u]=0 \\
&\Leftrightarrow  u=0.
\end{align*}
We deduce that $\derD H_{\mu}(x,y)[(u,v)]=0$ has the unique solution $(u,v)=(0,0)$. The result (i) follows then.

(ii) The following proof is inspired by the proof of \cite[Proposition 4.4]{Chua_Yi}. 

Denote $M=\derD F(x^{(k)})$, $\bar{D}^{(k)}=\derD p_\mu(x^{(k)}-y^{(k)})$ and $\bar{D}^{(k)}[u]=\tilde{u}$. Since $F$ is strongly monotone, then there exists a constant $\varrho$ such that $\iprod{Md}{d} =\iprod{M^Td}{d} \geq \varrho \norm{d}^2$ for all $d\in \mathbb{E}$. Furthermore, we are considering the barrier $f$ with positive semidefinite  $ \nabla^2 f$, hence  $\iprod{M^Td + \mu^2 \nabla^2 f\bracket{x^{(k)}-y^{(k)}}d }{d} \geq \varrho \norm{d}^2$, $\forall d\in \mathbb{E}$. By Cauchy-Schwarz inequality, 
$$\iprod{M^Td + \mu^2 \nabla^2 f\bracket{x^{(k)}-y^{(k)}}d}{d} \leq \norm{M^Td + \mu^2 \nabla^2 f\bracket{x^{(k)}-y^{(k)}}d }\norm{d},$$ we then deduce $\norm{M^Td + \mu^2 \nabla^2 f\bracket{x^{(k)}-y^{(k)}}d} \geq \varrho \norm{d}$ for all $d\in \mathbb{E}$. 

Let $m_F$ be the constant such that $\norm{\derD F(x^{(k)})}\leq m_F$ for all $k\geq 0$. 
For arbitrary $u\in \mathbb{E}$, we consider 2 cases

Case 1: $\norm{\tilde{u}} \geq \cfrac{1}{1+m_F+\varrho} \norm{u}$. We note that $$\bar{D}^{(k)}[u]=\tilde{u} \Leftrightarrow \bracket{I+\mu^2 \nabla^2 f\bracket{x^{(k)}-y^{(k)}}}[\tilde{u}] =u.$$
Then we have
\begin{align*} 
\norm{\bracket{\mathbf{I}-\bar{\derD}^{(k)}  + M^T \bar{\derD}^{(k)}}[u]}&=\norm{\tilde{u}+\mu^2 \nabla^2 f(x^{(k)}-y^{(k)}) \tilde{u}-\tilde{u} + M^T \tilde{u}}\\
&\geq \varrho \norm{\tilde{u}} \geq \cfrac{\varrho}{1+m_F + \varrho}  \norm{u}.
\end{align*}

Case 2: $\norm{\tilde{u}} < \cfrac{1}{1+m_F+\varrho} \norm{u}$. We have
\begin{align*} 
\norm{u -\bar{\derD}^{(k)}[u] +  M^T \bar{\derD}^{(k)} [u] }&\geq \norm{u}-\norm{\tilde{u}} - \norm{M^T \tilde{u}}\\
&\geq \norm{u} - (1+m_F) \norm{\tilde{u}}\\
&\geq \cfrac{\varrho}{1+m_F + \rho}\norm{u}.
\end{align*}
We have proved that $\norm{u -\bar{\derD}^{(k)} u +  M^T \bar{\derD}^{(k)} u } \geq \cfrac{\varrho}{1+m_F + \varrho}\norm{u} $ for all $u\in \mathbb{E}$. On the other hand, we note that the smallest singular value of arbitrary linear operator $L$, which equals to $\min\limits_{u\ne 0} \left\{ \cfrac{\norm{L u}}{\norm{u}}\right\}$, is invariant under taking transpose. Hence, for all $u\in \mathbb{E}$ we have 
\begin{align}\label{eq:unsi}
 \norm{u -\bar{\derD}^{(k)} u +  \bar{\derD}^{(k)} Mu} \geq \cfrac{\varrho}{1+m_F + \varrho}\norm{u}.
\end{align}  
For fixed $(r,s)\in \mathbb{E}\times \mathbb{E}$, let $\derD H_\mu(x^{(k)},y^{(k)})^{-1}[(r,s)]=(u,v)$.  It follows from $$\derD H_\mu(x^{(k)},y^{(k)})[(u,v)]=(r,s)$$ that 
$(\mathbf I-\bar{\derD}^{(k)} )[u]+\bar{\derD}^{(k)} v=r, \quad \derD F(x^{(k)})[u]-v=s.$
Therefore, 
\begin{align} \label{eq:uns}
\left\{ \begin{array}{ll}
\bracket{\mathbf{I}-\bar{\derD}^{(k)}  + \bar{\derD}^{(k)}  (\derD F(x^{(k)}))}[u]=r + \bar{\derD}^{(k)} [s],\\
v=\derD F(x^{(k)}) [u] -s
\end{array}\right.
\end{align}
From the first equation of \eqref{eq:uns} and Inequality \eqref{eq:unsi}, we deduce 
$$\norm{u}\leq \cfrac{1+m_F+\varrho}{\varrho} \norm{r+ \bar{\derD}^{(k)} s}=O(\norm{(r,s)}).
$$
And the second equation of \eqref{eq:uns} implies that 
$$\norm{v}=\norm{\derD F(x^{(k)})u-s} \leq m_F\norm{u} + \norm{s} = O(\norm{(r,s}).
$$
Consequently, $\norm{\derD H_{\mu}(x^{(k)},y^{(k)})^{-1}}$ is uniformly bounded. \QEDB
\end{proof}

\section{Application to specific convex sets} 
\label{sec:application}
In this section, we use notation $(x^*,y^*)$ and $z^*$ as in Section \ref{subsec:localrate}. We now apply our  result in Theorem \ref{theorem:sp_linear_conver} to specific convex sets. In particular, we choose appropriate barrier functions to formulate the corresponding barrier-based smoothing approximations $p_\mu(\cdot)$ of the projection onto the specific convex sets, and we then verify the condition $\norm{\derD p_\mu(z)-T^*} = O\bracket{\norm{(z-z^*,\mu)}}$ in Theorem \ref{theorem:sp_linear_conver} so that the local quadratic convergence of Algorithm \ref{alg:IeNICM} is assured by Theorem \ref{theorem:sp_linear_conver}.  We recall that $T^*=\derD \Pi_X(z^*)$ as proved in Theorem \ref{theorem:seq_Jacob_conver}. We also prove in this section that  when $X$ is a non-negative orthant, a positive semidefinite cone, an epigraph of matrix operator norm or an epigraph of matrix nuclear norm, then differentiability of the projector $\Pi_X(\cdot)$ at $z^*$ is equivalent to strict complementarity of $(x^*,y^*)$. 

To construct the smoothing approximation, throughout this section, we use the following $\vartheta$-self-concordant barriers $f$ for $X$. 
\begin{enumerate}
\item When $X$ is nonnegative orthant $\mathbb{R}^n_+$, we use $f(x)=-\sum\limits_{i=1}^n\log x_i
$.
\item When $X$ is positive semidefinite cone $\mathbb{S}^n_+$, we use $f(x)=-\logdet x$.
\item When $X$ is polyhedral set $P(A,b)=\{x \in \mathbb{R}^n: A x\geq b \}$ for some matrix $A \in \mathbb{R}^{m\times n}$ and vector $b\in \mathbb{R}^n$, we use $f(x)= - \sum\limits_{i=1}^m \log(A_i x -b_i)$.
\item When $X$ is epigraph of matrix operator norm cone
$$\mathcal K_{m,n}=\{(t,x) \in R_+ \times R^{m\times n}: t \geq \| x\|\},\; \text{with}\; m\leq n,
$$
we use (see \cite[Part 5.4.6]{Yurii})
$f(t,x)=-\logdet\: \begin{pmatrix} & tI_n & x^T\\&x &tI_m\end{pmatrix}.$
\item When $X$ is epigraph of matrix nuclear norm 
$$K_{m,n}^\sharp =\{(t,x)\in \mathbb{R}_+ \times \mathbb{R}^{m\times n}: t \geq \norm{x}_* \} \; \text{with}\; m\leq n,
$$
we use the modified Fenchel barrier function
$$
f^\sharp(s_o, s)= -\inf\{s_o x_o +
\trace(s^T x) + f(x_o, x) : (x_o, x) \in K_{m,n}\}.
$$
\end{enumerate}
We use \cite[Definition 2]{Pataki2001} for the definition of strict complementary solutions of VI. Specifically,  considering the VI \eqref{eq:compl_equation} over a convex cone $K$, a pair of feasible primal-dual solution $(x,y)$ of \eqref{eq:compl_equation} is strictly complementary if $x\in\relint(\mathcal{F}) $ and $y\in \relint (\mathcal{F}^\vartriangle)$ for some face $\mathcal{F}$ of $K$. If $(0,y)$ is feasible for $y \in \inter (K^\sharp)$ or $(x,0)$ is feasible for $x \in\inter (K)$ then the corresponding pair is also called strictly complementary. Here $\mathcal{F}^\vartriangle=\{v\in K^\sharp: \forall u\in \mathcal{F}, \iprod{v}{u}=0\}$ is the complementary face of $\mathcal{F}$. The following theorem states our main result. 
\begin{theorem}\label{theorem:verify}
When $X$ is a non-negative orthant, a positive semidefinite cone, a polyhedral set, an epigraph of matrix operator norm or an epigraph of matrix nuclear norm, then the following statements are equivalent. 
\begin{itemize}
\item[(i)] The projector $\Pi_X(\cdot)$ is differentiable at $z^*$.  
\item[(ii)] The derivative $\derD p_\mu(z)$ converges to $\derD \Pi(z^*)$ when $(z,\mu)$ converges to $(z^*,0)$, and we then have
$$ \norm{\derD p_\mu(z)-\derD \Pi_X(z^*)}=O\bracket{\norm{(z-z^*,\mu)}}.
$$ 
\end{itemize}
When $X$ is a non-negative orthant, a positive semidefinite cone, an epigraph of matrix operator norm or an epigraph of matrix nuclear norm, then the above statements are further equivalent to strict complementarity of $(x^*,y^*)$.
\end{theorem}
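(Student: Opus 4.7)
The implication (ii) $\Rightarrow$ (i) is immediate from Theorem~\ref{theorem:seq_Jacob_conver}, which moreover tells us that the limit $T^{*}$ must equal $\derD \Pi_X(z^{*})$ whenever (ii) holds. So the real content is the reverse direction (i) $\Rightarrow$ (ii), together with the third equivalence (strict complementarity) for the four cone-type cases. The plan is to handle each of the five convex sets separately, exploiting the explicit form of the chosen $\vartheta$-self-concordant barrier to write down $p_\mu$ or its derivative in a closed spectral/SVD form, and to read off both the limit and the first-order rate directly.

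For the non-negative orthant the defining equation $p+\mu^{2}\nabla f(p)=z$ decouples coordinate-wise into $p^{2}-z_{i}p-\mu^{2}=0$, giving the CHKS form $p_\mu(z_i)=\tfrac12(z_i+\sqrt{z_i^{2}+4\mu^{2}})$; a direct Taylor expansion of $p_\mu'(z_i)$ around any $z_i^{*}\neq 0$ shows $|p_\mu'(z_i)-\Pi_X'(z_i^{*})|=O(\mu^{2}+|z_i-z_i^{*}|)$, while $\Pi_X$ is differentiable at $z^{*}$ iff every $z_i^{*}\neq 0$, which in turn is equivalent to $x_i^{*}+y_i^{*}>0$ for all $i$, i.e.\ strict complementarity. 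For the PSD cone the same idea works matrix-wise: since $\nabla(-\logdet)(p)=-p^{-1}$, $p_\mu(z)=\tfrac12(z+(z^{2}+4\mu^{2}I)^{1/2})$, and $\derD p_\mu$ is the L\"owner operator associated to the scalar CHKS map applied in the eigenbasis of $z$. Known formulas for the derivative of symmetric spectral functions (e.g.\ from Lewis--Sendov, Sun--Sun) show that $\Pi_X$ is differentiable at $z^{*}$ iff $0$ is not an eigenvalue of $z^{*}$, and a uniform perturbation estimate for the divided differences $(\phi(\lambda_i)-\phi(\lambda_j))/(\lambda_i-\lambda_j)$ with $\phi=p_\mu$ or $\phi=\Pi_{\bbR_+}$ delivers the $O(\|(z-z^{*},\mu)\|)$ rate; the equivalence with strict complementarity follows because $\rank(x^{*})+\rank(y^{*})=n$ exactly when $z^{*}$ has no zero eigenvalue.

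The polyhedral case is resolved by reducing to the orthant: writing $X=\{x:Ax\ge b\}$ and using $f(x)=-\sum\log(A_ix-b_i)$, the barrier-based smoothing $p_\mu$ can be expressed through slack variables $s=Ax-b$ whose smoothed Lagrangian optimality conditions reproduce the CHKS analysis coordinate by coordinate; differentiability of $\Pi_X$ at $z^{*}$ is well known to be equivalent to the corresponding slack vector having no zero coordinate whose associated Lagrange multiplier also vanishes, and this is precisely what is needed so that the orthant argument transfers. For the operator-norm epigraph $\mathcal K_{m,n}$ and the nuclear-norm epigraph $K_{m,n}^{\sharp}$, the plan is to diagonalise via an SVD, reducing the projection and the smoothing equation to a problem on singular values analogous to the symmetric $(t,\sigma)$-case treated in detail in~\cite{Chua_Hien}; the derivative of $\Pi_X$ at $z^{*}=(t^{*},x^{*})$ can then be written in terms of partitions of singular values of $x^{*}$ into those strictly above $t^{*}$, equal to $t^{*}$, and strictly below $t^{*}$, and $\Pi_X$ is differentiable iff no singular value equals $t^{*}$ except in the trivial interior/exterior cases. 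The duality between $\mathcal K_{m,n}$ and $K_{m,n}^{\sharp}$ (Fenchel-conjugate barriers) yields a Moreau-type identity $p_\mu^{\mathcal K}+p_\mu^{K^{\sharp}}=\mathbf I$, so the nuclear-norm case follows from the operator-norm case; strict complementarity of $(x^{*},y^{*})$ in the face-theoretic sense then matches exactly this singular-value genericity condition.

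The main obstacle I expect is not in the orthant or polyhedral cases, where the analysis is essentially one-dimensional, but in controlling the rate $O(\|(z-z^{*},\mu)\|)$ for the spectral and SVD cases when some (strictly positive) pair of eigen/singular values of $z^{*}$ coincides: the divided-difference representation of $\derD p_\mu$ involves terms $(p_\mu(\lambda_i)-p_\mu(\lambda_j))/(\lambda_i-\lambda_j)$ that appear singular but are in fact smooth in the relevant variables, and establishing the uniform first-order estimate requires a careful second-order Taylor analysis of the scalar CHKS map on the diagonal $\lambda_i=\lambda_j$. Once this diagonal estimate is in hand, gathering the cases into the single statement of the theorem is routine.
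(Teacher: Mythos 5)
Your overall architecture (use Theorem~\ref{theorem:seq_Jacob_conver} for (ii)$\Rightarrow$(i), then verify (i)$\Rightarrow$(ii) and the strict-complementarity equivalence set by set with the explicit barriers) matches the paper, and your treatment of the orthant and PSD cone is essentially the paper's: there the map $(z,\mu)\mapsto \derD p_\mu(z)$ is jointly smooth near $(z^*,0)$ once $z^*$ has no zero eigenvalue, so the $O(\norm{(z-z^*,\mu)})$ rate is just local Lipschitz continuity --- the ``coinciding eigenvalue'' difficulty you flag dissolves if you argue at the level of the matrix function $\frac12\bigl(z+(z^2+4\mu^2I)^{1/2}\bigr)$ rather than through divided differences.

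The genuine gap is the polyhedral case. The smoothing map there is defined implicitly by $x-\mu^2\sum_i (A_ix-b_i)^{-1}A_i^T=z$, and this does \emph{not} decouple into scalar CHKS equations in the slack variables $s_i=A_ix-b_i$ unless the rows of $A$ are orthogonal; neither does the projection onto $P(A,b)$ act coordinate-wise on slacks. So the proposed ``reduction to the orthant'' would fail at the first step, and the claimed characterization of differentiability via ``no zero slack with vanishing multiplier'' is not what is available (the paper deliberately excludes polyhedra from the strict-complementarity equivalence and instead uses the characterization $z^*\in\inter(\mathcal{F}_{\mathcal{I}}+\mathcal{N}_{\mathcal{I}})$ from Proposition~\ref{prop:polyhedral_properties}). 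What is actually needed, and what the paper supplies, is a face-based argument: identify the active index set $\mathcal{I}^*$, prove that along $(z,\mu)\to(z^*,0)$ the active slacks satisfy $\mu^2/(A_ix-b_i)\ge\kappa_i>0$ (Proposition~\ref{prop:limit_polyhedral}, which itself uses uniqueness of the neighbouring face), and then estimate $\derD p_\mu(z)w$ against $\Pi_N(w)$, $N$ the null space of the active rows, to get the $O(\mu)$ rate. Two further ingredients are missing from your sketch for the matrix-norm epigraphs: the rate estimate requires a Lipschitz selection of the singular-vector factors (the paper invokes the Chen--Tseng perturbation lemma, Lemma~\ref{lemma2}, to choose $u^*,v^*$ with $\norm{u-u^*}=O(\norm{z-z^*})$), since $\derD P_\mu$ is not a function of singular values alone but couples the off-diagonal blocks $\bar{\mathsf h}_{ij},\bar{\mathsf h}_{ji}$; and the equivalence with strict complementarity is not an immediate ``matching of genericity conditions'' but needs the face-theoretic computation (via de S\'a's description of the faces of $K_{m,n}$ and $K_{m,n}^\sharp$) showing $r=r^\sharp$ and $\mathcal{F}_{K^\sharp}=\mathcal{F}_K^{\vartriangle}$. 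Your Moreau-type identity for passing to the nuclear-norm epigraph is correct in spirit, though it should read $P^\sharp_\mu(z_o,z)=(z_o,z)+P_\mu(-z_o,-z)$ rather than $p_\mu^{\mathcal K}+p_\mu^{K^\sharp}=\mathbf I$.
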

Although Theorem \ref{theorem:verify} recovers the local quadratic convergence of non-interior continuation method for non-negative orthant and positive semidefinite cone, it is worth noting that Theorem \ref{theorem:verify} provides a new technique in proving these local convergence rate. We leave the proof of Theorem \ref{theorem:verify} for these cases (non-negative orthant and positive semidefinite cone) to  \ref{proof2}. We now provide the proofs for the remaining cases of $X$. We see that Theorem \ref{theorem:seq_Jacob_conver} already shows that Statement $(i)$ is a consequence of Statement $(ii)$. We now prove the inverse direction and prove the equivalence to strict complementarity of $(x^*,y^*)$ case by case.

\subsection{Polyhedral set $P(A,b)$} \label{sec:polyhedral_set}
Before going to the detailed proof for the case of polyhedral sets, we provide some properties that will be used later.
\subsubsection{Some preliminaries for polyhedral set}
Denote
 $$\mathcal{I}_0 =\big\{\mathcal{I} \subset \{1,\ldots,m \}: \exists x\in \mathbb{R}^n \;\text{such that}\; A_i x = b_i\; \forall i\in \mathcal{I}\;  \text{and} \; A_i x > b_i \;\forall i \not\in \mathcal{I} \big \}.$$ 
Proposition \ref{prop:polyhedral_properties} summarizes some results from \cite[Part 4.1]{Facchinei} and \cite[Lemma 5]{Pang}.
\begin{proposition} \label{prop:polyhedral_properties}
\begin{itemize}
\item[(i)]Each nonempty face $\mathcal{F}_\mathcal{I}$ of $P(A,b)$ defines an index set $\mathcal{I}\in \mathcal{I}_0$, and vice versa :
$$\mathcal{F}_\mathcal{I} = \{x \in P(A,b): A_i x=b_i, \forall i\in \mathcal{I}.\}
$$
\item[(ii)] For each $x\in \relint (\mathcal{F}_\mathcal{I})$, the normal cone of $P(A,b)$ at $x$ is defined by
$$ \mathcal{N}_I=\coni\{- A_i^T, i\in \mathcal{I}\},
$$
which is independent of $x$ and depends only on the face $\mathcal{F}_\mathcal{I}$.
\item[(iii)] It holds that
$$ \bigcup\limits_{\mathcal{I} \in \mathcal{I}_0} \mathcal{F}_\mathcal{I} + \mathcal{N}_\mathcal{I} = \mathbb{R}^n.
$$
Moreover, if $\mathcal{I},\mathcal{J}$ are distinct index sets in $\mathcal{I}_0$ such that  
$$P_\mathcal{IJ} = (\mathcal{F}_\mathcal{I} + \mathcal{N}_\mathcal{I}) \cap (\mathcal{F}_\mathcal{J} + \mathcal{N}_\mathcal{J}) \ne \emptyset,
$$
then
\begin{itemize}
\item[(a)] $P_\mathcal{IJ}=(\mathcal{F}_\mathcal{I} \cap \mathcal{F}_\mathcal{J}) + (\mathcal{N}_\mathcal{I} + \mathcal{N}_\mathcal{J})$; and 
\item[(b)] $P_\mathcal{IJ}$ is a common face of $\mathcal{F}_\mathcal{I} + \mathcal{N}_\mathcal{I}$ and $\mathcal{F}_\mathcal{J} + \mathcal{N}_\mathcal{J}$.
\end{itemize}
\item[(iv)] For each $x\in \mathcal{F}_\mathcal{I} + \mathcal{N}_\mathcal{I}$, we have $\Pi_{P(A,b)}(x)= \Pi_{\mathcal{S}_\mathcal{I}}(x)$, where
$$\mathcal{S}_\mathcal{I} = \affine (\mathcal{F}_\mathcal{I})=\{x\in \mathbb{R}^n : A_i x= b_i, \forall i\in \mathcal{I} \}.
$$
The projector is directionally differentiable everywhere
$$\Pi'_{P(A,b)} (x;d)= \Pi_{\mathcal{C}}(d)
$$
where $\mathcal{C} =\mathcal{C}(x; P(A,b))=\mathcal{T}(\bar{x}; P(A,b)) \cap (\bar{x}-x)^\perp,$ with  $\bar{x}=\Pi_{P(A,b)}(x)$, is the critical cone of $P(A,b)$ at $x$.
And  the projector is Fr{\'e}chet-differentiable at $x$ if and only if $x\in \inter ( \mathcal{F}_\mathcal{I} + \mathcal{N}_\mathcal{I})$.
\end{itemize}
\end{proposition}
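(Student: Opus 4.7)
The proposition aggregates four classical polyhedral-geometry facts that the authors attribute to \cite[Part 4.1]{Facchinei} and \cite[Lemma 5]{Pang}; my plan is to verify each item via standard active-set/KKT bookkeeping and to defer to those references for the one genuinely combinatorial step. For (i) the correspondence between nonempty faces of $P(A,b)$ and index sets in $\mathcal{I}_0$ is the face-structure theorem for polyhedra: a face is carved out by tightening a subset of the defining inequalities, and membership in $\mathcal{I}_0$ encodes both nonemptiness and that $\mathcal{I}$ is exactly the active set on $\relint(\mathcal{F}_\mathcal{I})$. Statement (ii) is then immediate from the KKT description of the normal cone at such a relative-interior point: only the constraints with $i\in\mathcal{I}$ are active, so $\mathcal{N}_\mathcal{I}$ is the conic hull of the outward normals $-A_i^T$, $i\in\mathcal{I}$.

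For the first assertion of (iii), given $z\in\mathbb{R}^n$ I would set $\bar x=\Pi_{P(A,b)}(z)$, take the unique $\mathcal{I}\in\mathcal{I}_0$ with $\bar x\in\relint(\mathcal{F}_\mathcal{I})$, and read off $z-\bar x\in\mathcal{N}_\mathcal{I}$ from the projection optimality condition, yielding $z\in\mathcal{F}_\mathcal{I}+\mathcal{N}_\mathcal{I}$. The harder part is the common-face identity for two meeting cells; here my plan is to observe that $\{\mathcal{F}_\mathcal{I}+\mathcal{N}_\mathcal{I}\}_{\mathcal{I}\in\mathcal{I}_0}$ is the normal-fan refinement of the polyhedron, whose incidence lattice mirrors the face lattice of $P(A,b)$, and then appeal to \cite[Lemma 5]{Pang}, which establishes exactly this meeting-along-a-common-face structure.

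Part (iv) has three sub-claims. The identity $\Pi_{P(A,b)}(x)=\Pi_{\mathcal{S}_\mathcal{I}}(x)$ on the cell $\mathcal{F}_\mathcal{I}+\mathcal{N}_\mathcal{I}$ is verified by writing $x=x_1+x_2$ with $x_1\in\mathcal{F}_\mathcal{I}$ and $x_2\in\mathcal{N}_\mathcal{I}$, noting that $\mathcal{N}_\mathcal{I}$ is orthogonal to the linear part of $\mathcal{S}_\mathcal{I}$ so that $\Pi_{\mathcal{S}_\mathcal{I}}(x)=x_1$, and then checking the KKT conditions for projection onto $P(A,b)$: $x_1$ is feasible, and $x-x_1\in\mathcal{N}_\mathcal{I}$ lies inside the normal cone to $P(A,b)$ at $x_1$ by (ii). The directional-derivative formula is the classical Haraux/Zarantonello result for projection onto a convex set, specialized to the polyhedral critical cone. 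The Fr\'echet-differentiability characterization then follows because $\Pi_{P(A,b)}$ agrees with the affine map $\Pi_{\mathcal{S}_\mathcal{I}}$ throughout $\inter(\mathcal{F}_\mathcal{I}+\mathcal{N}_\mathcal{I})$, while at a boundary point two such affine pieces meet with distinct linear parts, ruling out Fr\'echet differentiability.

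The only genuinely delicate step is the common-face identity in (iii), which uses the normal-fan argument of \cite[Lemma 5]{Pang}; everything else reduces to routine KKT/active-set manipulation that I would write out directly.
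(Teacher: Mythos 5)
The paper offers no proof of this proposition at all — it is stated explicitly as a summary of results from the cited references (Facchinei, Part 4.1, and Pang, Lemma 5) — and your sketch takes essentially the same route: routine active-set/KKT verifications of (i), (ii), the covering claim in (iii), and (iv), with the delicate common-face identity in (iii) deferred to the same references, which is sound. The only cosmetic slip is invoking (ii) to place $x-x_1\in\mathcal{N}_\mathcal{I}$ inside the normal cone of $P(A,b)$ at $x_1\in\mathcal{F}_\mathcal{I}$, a point that need not lie in $\relint(\mathcal{F}_\mathcal{I})$; what you actually need (and clearly have, since each $-A_i^T$ with $i\in\mathcal{I}$ is active at $x_1$) is the containment $\mathcal{N}_\mathcal{I}\subseteq N_{P(A,b)}(x_1)$, valid at every point of the face.
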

Gradient and Hessian of the barrier $f(x)= - \sum\limits_{i=1}^m \log(A_i x -b_i)
$ is
$$\nabla f(x)= -\sum\limits_{i=1}^m \cfrac{1}{A_i x -b_i} A_i^T,\quad \nabla^2 f(x)= \sum\limits_{i=1}^m \cfrac{1}{(A_ix-b_i)^2} A_i^T A_i.
$$
The barrier-based smoothing approximation $p_\mu(z)=x$ is then defined by
\begin{align}\label{eq:smooth_equa_polyhedral}
x - \mu^2 \sum\limits_{i=1}^m \cfrac{1}{A_i x -b_i} A_i^T = z
\end{align}

\begin{proposition} \label{prop:limit_polyhedral}
Let $z^*$ be a differentiable point of the projector $\Pi_{P(A,b)}$ and $\mathcal{F}_{\mathcal{I}^*}$ be its neighbor face, i.e., $z^*\in \mathcal{F}_{\mathcal{I}^*} + \mathcal{N}_{\mathcal{I}^*}$. Let $(z,\mu)$ converge to $(z^*,0)$ and $x =p_\mu(z)$. Then for each $i\in \mathcal{I}^*$, there exist a positive constant $\kappa_i $  such that 
$$\cfrac{\mu}{A_i x - b_i} > \cfrac{\kappa_i}{\mu}.$$ 
\end{proposition}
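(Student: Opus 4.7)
The starting point is to rewrite the smoothing equation \eqref{eq:smooth_equa_polyhedral} in the suggestive Lagrangian form
\[
x - z \;=\; \sum_{i=1}^m \lambda_i A_i^T, \qquad \lambda_i \;\defeq\; \frac{\mu^2}{A_i x - b_i} \;>\; 0,
\]
so that the desired bound $\mu/(A_i x - b_i) > \kappa_i/\mu$ is literally $\lambda_i > \kappa_i$. For $i \notin \mathcal{I}^*$, the continuity of $p_\mu$ yields $x \to x^*$, and since $A_i x^* - b_i > 0$ one immediately gets $\lambda_i = O(\mu^2) \to 0$. The substantive content of the proposition is therefore a uniform positive lower bound on $\lambda_i$ for $i \in \mathcal{I}^*$.

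To unpack the differentiability assumption, I would appeal to Proposition \ref{prop:polyhedral_properties}(iv): differentiability of $\Pi_{P(A,b)}$ at $z^*$ is equivalent to $z^* \in \inter(\mathcal{F}_{\mathcal{I}^*} + \mathcal{N}_{\mathcal{I}^*})$. Combined with the standard identity $\relint(A + B) = \relint A + \relint B$ and the fact that $x^* \in \relint(\mathcal{F}_{\mathcal{I}^*})$ (by uniqueness of the projection decomposition), this forces $z^* - x^* \in \relint(\mathcal{N}_{\mathcal{I}^*})$. By the relative-interior characterization of a finitely generated cone, there must exist strictly positive multipliers $\alpha_i^* > 0$ for all $i \in \mathcal{I}^*$ with $x^* - z^* = \sum_{i \in \mathcal{I}^*} \alpha_i^* A_i^T$; equivalently, the dual-optimal face
\[
\mathcal{D}^* \;=\; \{\lambda \in \bbR^m_+ : \lambda_j = 0 \text{ for } j \notin \mathcal{I}^*,\; A^T\lambda = x^* - z^*\}
\]
has non-empty relative interior, consisting precisely of those $\lambda \in \mathcal{D}^*$ with $\lambda_i > 0$ on all of $\mathcal{I}^*$.

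Assuming toward a contradiction that $\lambda_{i_0}^k \to 0$ along some subsequence $(z_k, \mu_k) \to (z^*, 0)$ and some $i_0 \in \mathcal{I}^*$, I would use $A^T \lambda^k = x^k - z^k$, the nonnegativity $\lambda^k \geq 0$, and boundedness of $\{x^k\}, \{z^k\}$ to extract a further subsequence along which $\lambda^k \to \lambda^* \in \mathcal{D}^*$; the limit then sits on the relative boundary of $\mathcal{D}^*$ because $\lambda_{i_0}^* = 0$. The contradiction comes from the central-path identities $\lambda_i^k (A_i x^k - b_i) = \mu_k^2$, which say exactly that $\lambda^k$ lies on the central path of the perturbed KKT system; a classical result in interior-point theory (e.g., G\"uler--Ye) ensures that this path converges, as $\mu \to 0$, to the \emph{analytic center} of $\mathcal{D}^*$, which lies in $\relint(\mathcal{D}^*)$ and hence has $\lambda_i > 0$ for every $i \in \mathcal{I}^*$---contradicting $\lambda_{i_0}^* = 0$. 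The main technical hurdle I foresee is that the classical analytic-center limit is formulated for \emph{fixed} $z = z^*$, whereas here both $z$ and $\mu$ move simultaneously; I would address this by invoking the local stability of the active set $\mathcal{I}^*$ on a neighborhood of $z^*$ (a by-product of the differentiability hypothesis) and the continuous dependence of the analytic center of $\mathcal{D}^*(z)$ on the right-hand side $x(z) - z$, to upgrade the pointwise-in-$z$ statement into the uniform lower bound $\lambda_i^k \geq \kappa_i$. This subtlety is genuinely non-trivial whenever $\{A_i : i \in \mathcal{I}^*\}$ is linearly dependent, because $\mathcal{D}^*$ is then a higher-dimensional face rather than a point and one cannot short-circuit the argument by passing to a unique multiplier.
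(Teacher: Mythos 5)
Your reduction to the multipliers $\lambda_i=\mu^2/(A_ix-b_i)$, and your use of the differentiability hypothesis to produce a multiplier vector that is strictly positive on all of $\mathcal{I}^*$ (via $z^*\in\inter(\mathcal{F}_{\mathcal{I}^*}+\mathcal{N}_{\mathcal{I}^*})$ and the representation of $\relint$ of a finitely generated cone), are sound. The gap is in the step that is supposed to produce the contradiction. The analytic-center result you invoke (McLinden, G\"uler--Ye type) describes $\lim_{\mu\to0}\lambda(z,\mu)$ for a \emph{fixed} instance $z$; in the proposition $z$ and $\mu$ move together. Your proposed patch---local constancy of the active set plus continuous dependence of the analytic center of $\mathcal{D}^*(z)$ on the right-hand side---only controls the iterated limit $\lim_{z\to z^*}\lim_{\mu\to0}\lambda(z,\mu)$, and says nothing about the joint limit along an arbitrary sequence $(z_k,\mu_k)\to(z^*,0)$. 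Without a uniform-in-$z$ statement about how fast the central path approaches (a neighbourhood of) the analytic center, a limit point of $\lambda(z_k,\mu_k)$ could a priori be any point of $\mathcal{D}^*$, including a relative-boundary point with $\lambda_{i_0}=0$; excluding exactly such boundary limit points is the content of the proposition, so at its crux the argument assumes what it must prove. A secondary flaw: boundedness of $\{\lambda^k\}$ does not follow from $\lambda^k\ge0$ together with boundedness of $A^T\lambda^k$ (there may be nonzero $\lambda\ge 0$ with $A^T\lambda=0$); you need the Slater point of $P(A,b)$, which gives $y$ with $A_iy>0$ for all $i\in\mathcal{I}^*$, to kill such recession directions.

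The multiplier route can be completed, but by a quantitative duality-gap estimate rather than by analytic centers: subtracting the KKT system of $\bar x^k=\Pi_{P(A,b)}(z_k)$ from the smoothing equation and taking the inner product with $x^k-\bar x^k$ gives $\sum_i\bar\lambda^k_i(A_ix^k-b_i)\le m\mu_k^2$ for every multiplier $\bar\lambda^k$ of the projection of $z_k$, hence $\lambda_i^k\ge\bar\lambda_i^k/m$ whenever $\bar\lambda^k_i>0$; choosing $\bar\lambda^k$ converging to a vector strictly positive on $\mathcal{I}^*$ (possible because $\bar x^k-z_k\to \Pi_{P(A,b)}(z^*)-z^*\in\relint\coni\{A_i^T:i\in\mathcal{I}^*\}$) yields the uniform $\kappa_i$ with no interchange of limits. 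For comparison, the paper's own proof avoids multiplier limits altogether: if $\mu_k^2/(A_jx^k-b_j)\to0$ for some $j\in\mathcal{I}^*$, then $z^*-\Pi_{P(A,b)}(z^*)$ lies in the closed cone $\mathcal{N}_{\mathcal{I}^*\setminus\{j\}}$, so $z^*\in\mathcal{F}_{\mathcal{I}^*\setminus\{j\}}+\mathcal{N}_{\mathcal{I}^*\setminus\{j\}}$, contradicting that differentiability places $z^*$ in the interior of the single cell $\mathcal{F}_{\mathcal{I}^*}+\mathcal{N}_{\mathcal{I}^*}$ of the normal manifold (Proposition \ref{prop:polyhedral_properties}).
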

See proof of Proposition \ref{prop:limit_polyhedral} in  \ref{proof4}. We are ready to prove that Statement $(i)$ implies Statement $(ii)$ in Theorem \ref{theorem:verify} for polyhedral set.
\begin{proof}
Let $A_{\mathcal{I}^*}$ be the matrix containing the rows $A_i, i\in \mathcal{I}^*$, $N$ be its null space, i.e., $N=\{x: A_i x=0 , \forall i\in \mathcal{I}^* \}$, and $N^\perp = \spn\{ A_i^T, i\in \mathcal{I}^*\}$. We can verify that $\Pi_N(w)=\derD\Pi_{P(A,b)}(Z^*)w$ for $w\in \mathbb{R}^n$ (see Statement $(iv)$ of Proposition \ref{prop:polyhedral_properties}). Let $w$ with $\norm{w}=1$ be fixed and $\der p_\mu(z)w=u$. We recall that $\der p_\mu(z) = [ I + \mu^2 \nabla^2 f(x)]^{-1}$, then we have
\begin{align} \label{temp6}
w=u + \mu^2 \nabla^2 f(x)u = u + \sum\limits_{i=1}^m \cfrac{\mu^2}{(A_ix-b_i)^2} (A_i u) A_i^T.
\end{align}
This implies
\begin{align} \label{temp61}
\begin{split}
\norm{w}^2 & = \norm{u}^2 + 2 \sum\limits_{i=1}^m \cfrac{\mu^2}{(A_ix-b_i)^2} (A_i u)^2 + \norm{\sum\limits_{i=1}^m \cfrac{\mu^2}{(A_ix-b_i)^2} (A_i u) A_i^T}^2\\
 &\geq \max\left\{\norm{u}^2,\sum\limits_{i=1}^m \cfrac{\mu^2}{(A_ix-b_i)^2} (A_i u)^2 \right \}.
\end{split}
\end{align} 
Inequality \eqref{temp61} shows that $u$ is bounded.  Furthermore, we remind that  $x\rightarrow \bar{z}^*$ when $(z,\mu)\rightarrow (z^*,0)$ and $A_i\bar{z}^* > b_i, \forall i\not\in \mathcal{I}^*$.  Therefore, from \eqref{temp6} we deduce
$$\norm{w-u-\sum\limits_{i\in \mathcal{I}^*} \cfrac{\mu^2}{(A_ix-b_i)^2} (A_i u) A_i^T}=\norm{\sum\limits_{i\not\in \mathcal{I}^*} \cfrac{\mu^2}{(A_ix-b_i)^2} (A_i u) A_i^T }= O(\mu).
$$
Moreover, $w=\Pi_N(w) + \Pi_{N^\perp}(w)$ and $\sum\limits_{i\in \mathcal{I}^*} \cfrac{\mu^2}{(A_ix-b_i)^2} (A_i u) A_i^T \in N^\perp$, we hence get
\begin{align} \label{temp62}
\begin{split}
&\dist(\Pi_N(w) - u, N^\perp)\\
 &\leq \dist( w-u-\sum\limits_{i\in \mathcal{I}^*} \cfrac{\mu^2}{(A_ix-b_i)^2} (A_i u) A_i^T,N^\perp) + \dist(-\Pi_{N^\perp}(w),N^\perp)\\
&\qquad +  \dist(\sum\limits_{i\in \mathcal{I}^*} \cfrac{\mu^2}{(A_ix-b_i)^2} (A_i u) A_i^T,N^\perp)\\ 
& =O(\mu).
\end{split}
\end{align}
We have $\Pi_N(u) = u - A_{\mathcal{I}^*}^\dagger A_{\mathcal{I}^*} u,$ where $A_\mathcal{I}^\dagger$ is the Moore-Penrose pseudo-inverse of $A_{\mathcal{I}^*}$. Thus,  
$\dist(u, N) =\norm{A_{\mathcal{I}^*}^\dagger A_{\mathcal{I}^*} u} = O (A_{\mathcal{I}^*}u).$
Moreover, Inequality \eqref{temp61} together with Proposition  \ref{prop:limit_polyhedral} yields that $A_iu=O(\mu) $ for $i\in \mathcal{I}^*$. Therefore,
$$
\dist(u-\Pi_N(w),N)\leq \dist(u,N) + \dist(-\Pi_N(w),N) = O(\mu).
$$
In company with \eqref{temp62}, we imply 
$$
\norm{\Pi_N(w)-u} \leq \norm{\Pi_N(\Pi_N(w)-u)} + \norm{\Pi_{N^\perp}(\Pi_N(w)-u)} = O(\mu)=O(\norm{(z-z^*,\mu)}).
$$
The result follows then. \QEDB
\end{proof} 

\subsubsection{Epigraph of $l_\infty$ norm $C_n$} 
The $l_\infty$ norm cone, which is defined by
$C_n=\{(t,x)\in\mathbb{R}\times \mathbb{R}^n: t\geq \|x\|_\infty\},$
is a special case of polyhedral set since we can rewrite it as
$C_n =\left\{ (t,x)\in\mathbb{R}\times \mathbb{R}^n,\, t-x_i \geq 0,\, t+ x_i \geq 0,\, \forall i=1,\ldots,n\right\}.
$
Then we use the following barrier for the  $l_\infty$ norm cone
 $$f(t,x)=-\sum\limits_{i=1}^n \log(t - x_i)-\sum\limits_{i=1}^n \log(t + x_i).$$
Its gradient and Hessian are
$$\nabla f(t,x)=-\sum\limits_{i=1}^n \cfrac{a_i}{t-x_i}-\sum\limits_{i=1}^n \cfrac{b_i}{t+x_i},
$$
$$\nabla^2 f(t,x)=\sum\limits_{i=1}^n \cfrac{a_ia_i^T}{(t-x_i)^2}+\sum\limits_{i=1}^n \cfrac{b_ib_i^T}{(t+x_i)^2},
$$
where $a_i=e_0-e_i$, $b_i=e_0+e_i$ and  $ e_i, i=0,\ldots, n$, are unit vectors of $\mathbb{R}^{n+1}$. The barrier-based smoothing approximation $p_\mu(z_o,z)=(t,x)$ is then defined by
\begin{align} \label{eq:smooth_equa_vector}
\begin{split}
t-\sum\limits_{i=1}^n \cfrac{2\mu^2}{t^2-x_i^2} t =z_o,\\
 x_i + \cfrac{2\mu^2}{t^2-x_i^2}x_i =z_i,\\
 t> |x_i|, 1\leq i\leq n.
 \end{split}
\end{align}
\begin{remark}
We note that the smoothing approximation in \eqref{eq:smooth_equa_vector} coincides with the smoothing approximation proposed by Chen in his thesis \cite{CHChen}. However, Chen uses a different approach to derive \eqref{eq:smooth_equa_vector}. In particular, he shows that the SA is the unique solution of the logarithmic penalty problem associated with the constrained optimization problem that finds the projection onto the epigraph of $l_\infty$ norm. We derive the barried-based SA for general polyhedral sets in  \eqref{eq:smooth_equa_polyhedral}, and \eqref{eq:smooth_equa_vector} is just a special case of \eqref{eq:smooth_equa_polyhedral}. 
\end{remark}
The following proposition gives another approach other than that of \cite[Proposition 3.2]{Dingetal} to find the projection onto $C_n$ which is used in the next section for epigraph of matrix operator norm. We give its proof in  \ref{proof3}.
\begin{proposition}
\label{prop:proj_vector_cone}
When $(z_o,z,\mu)\rightarrow (z_o^*,z^*,0)$,  the limit of the smoothing approximation defined by \eqref{eq:smooth_equa_vector}is the pair $(t^*,x^*)$ given by 
\begin{align}\label{eq:proj_l_infty}
\begin{split}
t^*(z_o^*,z^*)& = \max \left\{\cfrac{1}{\mathbf k^*+1}(z_o^* + \sum\limits_{i=1}^{\mathbf k^*} | z^*_{\pi(i)}|),0 \right\},\\
x_i^*& = \begin{cases} \sgn(z_i^*) t^* \; &\mbox{for}\; i=\pi(1), \ldots, \pi(\mathbf k^*),\\
z_i^* \; & \mbox{for}\; i=\pi(\mathbf k^*+1), \ldots, \pi(n),\end{cases}
\end{split}
\end{align}
where $\pi$ is a permutation of $\{1,\ldots,n\}$ such that $|z^*_{\pi(1)}| \geq \ldots \geq |z^*_{\pi(n)}|,$ and $\mathbf k^*$ is the unique nonnegative integer satisfying 
$$|z^*_{\pi(\mathbf k^*)}|> \max \left\{\cfrac{1}{\mathbf k^*+1}(z_o^* + \sum\limits_{i=1}^{\mathbf k^*} | z^*_{\pi(i)}|),0 \right\} \geq |z^*_{\pi(\mathbf k^*+1)}|,
$$
where we let $z^*_{\pi(0)}=\infty$ and $z^*_{\pi(n+1)}=0$. Consequently, $\Pi_{C_n}(z_o^*,z^*)=(t^*,x^*)$.
\end{proposition}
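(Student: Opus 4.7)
The plan is to rewrite the smoothing system \eqref{eq:smooth_equa_vector} in a Lagrangian form resembling the KKT conditions for projection onto $C_n$, and then pass to the limit $\mu \downarrow 0$. Setting $\lambda_i := \mu^2/(t - x_i) > 0$ and $\eta_i := \mu^2/(t + x_i) > 0$ (both positive by the barrier condition $t > |x_i|$), the identities $\tfrac{1}{t - x_i} + \tfrac{1}{t + x_i} = \tfrac{2t}{t^2 - x_i^2}$ and $\tfrac{1}{t - x_i} - \tfrac{1}{t + x_i} = \tfrac{2 x_i}{t^2 - x_i^2}$ convert \eqref{eq:smooth_equa_vector} into
\[
t = z_o + \sum_{i=1}^n (\lambda_i + \eta_i), \qquad x_i = z_i + \eta_i - \lambda_i, \qquad \lambda_i(t - x_i) = \eta_i(t + x_i) = \mu^2,
\]
which is precisely the KKT system for projecting $(z_o, z)$ onto $C_n$, with the complementarity conditions perturbed by $\mu^2$.

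By Theorem \ref{theorem:bsa}, $(t, x) = p_\mu(z_o, z)$ converges to $\Pi_{C_n}(z_o^*, z^*)$ and is therefore bounded; the first relation above then forces $\sum_i (\lambda_i + \eta_i) = t - z_o$ and hence each $\lambda_i, \eta_i$ to be bounded. Passing to a subsequential limit $(t^*, x^*, \lambda^*, \eta^*)$, the perturbed complementarity tightens to $\lambda_i^*(t^* - x_i^*) = 0 = \eta_i^*(t^* + x_i^*)$. A case split then yields: if $|z_i^*| \leq t^*$, both multipliers vanish and $x_i^* = z_i^*$; if $|z_i^*| > t^*$, the coordinate saturates as $x_i^* = \sgn(z_i^*)\, t^*$ and exactly one of $\lambda_i^*, \eta_i^*$ equals $|z_i^*| - t^*$. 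Substituting back into the equation for $t^*$ gives $(k + 1)\, t^* = z_o^* + \sum_{i=1}^k |z_{\pi(i)}^*|$ with $k = |\{i : |z_i^*| > t^*\}|$, which after sorting is exactly the formula \eqref{eq:proj_l_infty} with $\mathbf k^* = k$.

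The main obstacle is the boundary case $t^* = 0$ together with uniqueness of $\mathbf k^*$. For the boundary, if $t^* = 0$ then every $x_i^* = 0$ and the equation $(k+1)\, t^* = z_o^* + \sum_{i=1}^k |z_{\pi(i)}^*|$ can hold only with a nonpositive right-hand side; the $\max\{\,\cdot\,,\,0\}$ in \eqref{eq:proj_l_infty} is precisely what records this case, which corresponds to $(z_o^*, z^*)$ lying in the polar cone of $C_n$. For uniqueness of $\mathbf k^*$, I would verify the algebraic equivalence $t_{k+1} \leq t_k \Leftrightarrow |z_{\pi(k+1)}^*| \leq t_k$, where $t_k := \tfrac{1}{k+1}(z_o^* + \sum_{i=1}^k |z_{\pi(i)}^*|)$; combined with the conventions $z_{\pi(0)}^* = \infty$ and $z_{\pi(n+1)}^* = 0$, this monotonicity through the sorted thresholds pins down a single index $\mathbf k^*$. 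The final statement $\Pi_{C_n}(z_o^*, z^*) = (t^*, x^*)$ is then immediate, since $p_\mu(z_o, z) \to \Pi_{C_n}(z_o^*, z^*)$ by Theorem \ref{theorem:bsa} and we have just shown the same limit equals $(t^*, x^*)$.
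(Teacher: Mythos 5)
Your proof is correct, but it takes a genuinely different route from the paper's. You reinterpret the smoothing system as the perturbed KKT (central-path) system of the projection QP onto the polyhedron $C_n$, introduce the multipliers $\lambda_i=\mu^2/(t-x_i)$, $\eta_i=\mu^2/(t+x_i)$, show they are bounded, and pass to a subsequential limit in which complementarity becomes exact; the per-coordinate dichotomy and the identity $(\mathbf k^*+1)t^*=z_o^*+\sum_{i=1}^{\mathbf k^*}|z^*_{\pi(i)}|$ then fall out of the limiting KKT conditions. The paper instead argues directly on the two families of equations: it uses the path properties $\sgn(x_i)=\sgn(z_i)$ and $|x_i|<\min\{t,|z_i|\}$ to get the dichotomy $x_i^*\in\{\sgn(z_i^*)t^*,\,z_i^*\}$, and obtains the value of $t^*$ by summing all $n+1$ equations into $z_o+\sum_i|z_i| = t+\sum_i|x_i|-\mu^2\sum_i \tfrac{2}{t+|x_i|}$ and letting $\mu\to 0$, treating $t^*>0$ and $t^*=0$ separately. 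Your framing is more structural and generalizes verbatim to any polyhedral set (limits of barrier-path multipliers satisfy KKT), while the paper's is more elementary and sidesteps multiplier boundedness and subsequences. One point to tighten: in the boundary case $t^*=0$ your per-coordinate multiplier identification (exactly one of $\lambda_i^*,\eta_i^*$ equal to $|z_i^*|-t^*$, both vanishing when $|z_i^*|\le t^*$) breaks down, so the equation $(\mathbf k^*+1)t^*=z_o^*+\sum_{i=1}^{\mathbf k^*}|z^*_{\pi(i)}|$ need not hold there; what you should extract instead is the inequality $\lambda_i^*+\eta_i^*\ge|\lambda_i^*-\eta_i^*|=|z_i^*|$ (since $x_i^*=0$), which with $t^*=z_o^*+\sum_i(\lambda_i^*+\eta_i^*)$ gives $z_o^*+\sum_i|z_i^*|\le 0$ and hence $\max\{t_{\mathbf k^*},0\}=0=t^*$ — the same conclusion the paper reaches via its summed identity.
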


\subsection{Epigraph of matrix operator norm and epigraph of matrix nuclear norm} \label{sec:epi_oper_norm}
We recall the self-concordant barrier function used for  $K_{m,n}$ is
\begin{equation} \label{eq:barrierKmn}
f(t,x)=-\logdet\: \begin{pmatrix} & tI_n & x^T\\&x &tI_m\end{pmatrix}.
\end{equation}
Its first derivative is 
\begin{align*}
\begin{split}
&\derD f(t,x)[\triangle t, \triangle x]\\
&= -\left \langle  \begin{pmatrix} & tI_n & x^T\\&x &tI_m\end{pmatrix}^{-1},\begin{pmatrix} \triangle tI_n &\triangle x^T\\ \triangle x & \triangle tI_m
\end{pmatrix} \right \rangle \\
&=\left \langle \begin{pmatrix}
& (tI_n - \frac{1}{t}x^Tx)^{-1} & -\frac{1}{t} x^T(t I_m- \frac{1}{t}xx^T )^{-1}\\
& - \frac{1}{t} (tI_m -\frac{1}{t}xx^T)^{-1} x & (t I_m - \frac{1}{t}xx^T)^{-1}
\end{pmatrix},\begin{pmatrix} \triangle tI_n &\triangle x^T\\ \triangle x & \triangle tI_m
\end{pmatrix} \right \rangle.
\end{split}
\end{align*}
This implies $$\nabla f(t,x)=\left(- \trace(tI_n - \frac{1}{t}x^Tx)^{-1} - \trace(t I_m - \frac{1}{t}xx^T)^{-1}, \frac{2}{t}(tI_m -\frac{1}{t}xx^T)^{-1} x \right).$$
Denote $\Sigma = [\Diag(\sigma_f(x))\quad 0]$, and let $x= u \Sigma v^T$. It follows from
$$(tI_n - \frac{1}{t}x^Tx) =v (t I_n -\frac{1}{t}\Sigma^T \Sigma ) v^T\text{and}\; (tI_m - \frac{1}{t}xx^T) = u (t I_m -\frac{1}{t}\Sigma \Sigma^T ) u^T
$$
that $\nabla f(t,x)=\left(-\sum\limits_{i=1}^m \cfrac{2t}{t^2 - \sigma_i(x)^2} -\cfrac{n-m}{t}\;,\; 2 u ( t^2 I_m - \Sigma \Sigma^T)^{-1} \Sigma v^T\right).$
The equation that defines the corresponding barrier-based smoothing approximation $P_\mu(z_o,z)$ of the projection onto $K_{m,n}$, $(t,x) + \mu^2 \nabla f(t,x) = (z_o,z)
$,
is rewritten as
\begin{equation} \label{eq:smooth_equ_matrix}
\begin{cases}
 t -\mu^2\left(\sum\limits_{i=1}^m \cfrac{2t}{t^2 - \sigma_i^2} + \cfrac{n-m}{t}\right) = z_o,   \\
\sigma_i + 2\mu^2 \cfrac{\sigma_i}{t^2 - \sigma_i^2}, = \sigma_i^o \\
t>|\sigma_i|,
\end{cases}
\end{equation}
where $\sigma =\sigma_f(x), \sigma^o=\sigma_f(z), z = u[\Diag(\sigma^o)\quad  0 ] v^T $. 

For $K_{m,n}^\sharp$, using the modified Fenchel barrier function gives us the corresponding smoothing approximation $P_\mu^\sharp(z_o,z)$. From \cite[Part 6.2]{Chua_Hien}, we have 
\begin{align} \label{eq:smthKmn}
P_\mu^\sharp(z_o,z) = P_\mu(-z_o,-z) + (z_o,z).
\end{align}  
Now we give characteristic of the projector onto $K_{m,n}$. For $(t,x)\in \mathbb{R}\times \mathbb{R}^{m\times n}$, we let $x=u[\Diag(\sigma_f(x)) \, 0]v^T$ be a singular value decomposition of $x$, and denote 
$$a=\{i: \sigma_i(x)>0\},\, b=\{ i: \sigma_i(x)=0\},\, c=\{ m+1,\ldots,n\},\, c'=\{1,\ldots,n-m\},
$$
$$(q_0(t,\sigma(x)),q(t,\sigma(x)))= \Pi_{C_n}(t,\sigma(x)),
$$  
and $\Omega_1, \Omega_2 \in \mathbb{R}^{m\times m}, \Omega_3\in \mathbb{R}^{m\times (n-m)}$ as follows
$$ (\Omega_1) _{ij}= \left\{ \begin{array}{lll}
\cfrac{q_i(t,\sigma(x))-q_j(t,\sigma(x))}{\sigma_i(x)-\sigma_j(x)} &\text{if}\; \sigma_i(x)\ne \sigma_j(x), \\ 
0 &\text{otherwise},
\end{array} \right. \rm{for} \, i,j \in \{1,\ldots,m\}
$$
$$(\Omega_2)_{ij}= \left\{ \begin{array}{lll}
\cfrac{q_i(t,\sigma(x))+ q_j(t,\sigma(x))}{\sigma_i(x)+\sigma_j(x)} &\text{if}\; \sigma_i(x)+ \sigma_j(x) \ne 0,\\ 
0 &\text{otherwise},
\end{array} \right.  \rm{for} \, i,j \in \{1,\ldots,m\}
$$
$$(\Omega_3)_{ij} = \left\{ \begin{array}{lll}
\cfrac{q_i(t,\sigma(x))}{\sigma_i(x)} &\text{if}\; \sigma_i(x)\ne 0, \\ 
0 &\text{otherwise},
\end{array} \right.   \rm{for} \, i\in \{1,\ldots,m\}, j\in\{ 1,\ldots,n-m\}.
$$
We can rewrite these matrices as follows
$$ \Omega_1 =\begin{bmatrix}
& 0&0 &(\Omega_1)_{\alpha\gamma}\\
& 0 & 0& E_{\beta \gamma}\\
&(\Omega_1)_{\gamma\alpha} & E_{\gamma\beta} &(\Omega_1)_{\gamma\gamma}
\end{bmatrix}, \Omega_2=\begin{bmatrix}
&(\Omega_2)_{aa} & (\Omega_2)_{ab}\\
&(\Omega_2)_{ba} & 0
\end{bmatrix}, \Omega_3=\begin{bmatrix}
&(\Omega_3)_{ac'}\\
&0
\end{bmatrix},
$$
where $E_{\beta\gamma}$, $E_{\gamma\beta}$ are two matrices whose entries are all ones, and 
$$\alpha=\{ i: \sigma_i > q_0(t,\sigma(x))\}, \beta=\{ i: \sigma_i= q_0(t,\sigma(x))\}, \gamma=\{ i: \sigma_i < q_0(t,\sigma(x))\}. $$ 
Let $\bar{\mathbf k}$ be number of $q_i(t,\sigma(x))$ such that $q_i(t,\sigma(x))=q_0(t,\sigma(x))$. Denote $$\delta=\sqrt{1+\bar{\mathbf k}},\; \rho(w_o,w)=\begin{cases} \delta^{-1} (w_o+Tr(\mathfrak S(u_\alpha^T w v_\alpha)) ) &\mbox{if}\; t\geq \|x\|_*, \\ 0& \mbox{otherwise}. \end{cases}$$

The following theorem characterizes the differentiable property of the projection onto $K_{m,n}$.
\begin{theorem} \cite[Theorem 3]{Dingetal} \label{theorem:diff_proj_oper_norm}
The metric projector $\Pi_{K_{m,n}}$ is differentiable at $(t,x)$ if and only if $(t,x)$ satisfies one of the following conditions
\begin{itemize}
\item[(i)] $t> \|x\|_2$, 
\item[(ii)] $\|x\|_2 > t > -\|x\|_*$ but $\beta=\emptyset$, 
\item[(iii)] $t<-\|x\|_*$.
\end{itemize}
Under condition (ii), we have $\derD \Pi_{K_{m,n}}(t,x)(w_o,w)=(w_o',w')$, where
$$(w_o,w) \in \mathbb R \times \mathbb{R}^{m\times n}, \quad w_o'=\delta^{-1} \rho(w_o,w)$$
and 
$$\begin{array}{ll} w'&= u \begin{bmatrix}
& \delta^{-1} \rho(w_o,w) I_{|\alpha|} & (\Omega_1)_{\alpha\gamma}\circ \mathfrak S(A)_{\alpha\gamma}\\
&(\Omega_1)_{\gamma\alpha} \circ \mathfrak S(A)_{\gamma\alpha} & \mathfrak S(A)_{\gamma\gamma}
\end{bmatrix} v_1^T \\ 
 &+ u \begin{bmatrix}
&(\Omega_2)_{aa} \circ \mathfrak T(A)_{aa} & (\Omega_2)_{ab}\circ \mathfrak T(A)_{ab} \\
& (\Omega_2)_{ba} \circ \mathfrak T(A)_{ba} & \mathfrak T(A)_{bb}
\end{bmatrix} v_1^T + u \begin{bmatrix}
(\Omega_3)_{ac'} \circ B_{ac'} \\
B_{bc'}
\end{bmatrix} v_2^T
\end{array}$$
with $v=[v_1|v_2]$, $A=u^T w v_1$, and $B =u^T w v_2$.
\end{theorem}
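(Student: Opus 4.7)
My plan for proving Theorem \ref{theorem:diff_proj_oper_norm} is to reduce $\Pi_{K_{m,n}}$ to the scalar $l_\infty$-norm cone projector $\Pi_{C_m}$ via the singular value decomposition, exploiting the orthogonal invariance of $K_{m,n}$ under the action $(t, x) \mapsto (t, U x V^T)$. First I would establish the SVD reduction
\[
\Pi_{K_{m,n}}(t, x) = \bigl(t',\; u[\Diag(q)\;0]v^T\bigr), \qquad (t', q) = \Pi_{C_m}\bigl(t, \sigma(x)\bigr),
\]
valid for any SVD $x = u[\Diag(\sigma(x))\;0]v^T$. This identity would follow from Ky Fan / Hoffman--Wielandt-type inequalities together with the rotational invariance of the Frobenius norm, which force any optimizer over $K_{m,n}$ to share singular vectors with $x$.

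Second, the explicit formula for $\Pi_{C_m}$ established in Proposition \ref{prop:proj_vector_cone} is smooth in $(t, \sigma)$ away from the kinks $|\sigma_i^*| = t^{\prime *}$, i.e.\ away from the configuration $\beta \ne \emptyset$ at the projected point. Combining smoothness of $\Pi_{C_m}$ with the SVD reduction yields the three regimes of the theorem: $t > \|x\|$ puts $(t,x)$ in $\inter K_{m,n}$ and the projection is the identity (regime (i)); $t < -\|x\|_*$ puts $(t,x)$ in $-\inter K_{m,n}^\sharp$ and the projection is the origin, on which $\Pi_{K_{m,n}}$ is linear (regime (iii)); otherwise a proper-face projection occurs, which is smooth exactly when $\beta = \emptyset$ (regime (ii)).

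For the derivative formula, I would use standard SVD-perturbation theory (Bonnans--Shapiro, Shapiro) to select, locally around $(t, x)$, smooth branches of singular factors $u(x), v(x)$ within each group of coalescing singular values (i.e.\ within $\alpha$, $\beta$, $\gamma$), and then differentiate the reconstruction $(t', \sigma', u, v) \mapsto \bigl(t',\, u[\Diag(\sigma')\;0]v^T\bigr)$ by the chain rule. This assembles $\derD \Pi_{K_{m,n}}$ block by block: the diagonal ``same-group'' contributions reduce to $\derD \Pi_{C_m}$ acting on singular-value perturbations, while the off-diagonal blocks that encode mixing of singular vectors collapse into the three divided-difference matrices --- $\Omega_1$ for same-sign mixing within the $m \times m$ block, $\Omega_2$ for opposite-sign mixing reflecting the evenness $\|-x\| = \|x\|$, and $\Omega_3$ for the extra $v_2$-columns that have no paired singular value. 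Splitting $A = u^T w v_1$ into its symmetric and skew-symmetric parts $\mathfrak{S}(A), \mathfrak{T}(A)$ separates the same-sign from opposite-sign mixings and produces the precise formula in the theorem.

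The main obstacle I anticipate is handling singular-value ties: even when $\Pi_{K_{m,n}}$ is differentiable, singular values of $x$ may coincide within a group, so some entries of $\Omega_i$ are of the indeterminate form $0/0$ and must be interpreted via SVD-perturbation limits. The key lemma to be proved is that within each group the divided differences extend continuously to values independent of the gauge freedom in the choice of $u, v$ inside the block, so that the assembled object is a genuine Fréchet derivative rather than merely a directional one. Conversely, for the failure part of regime (ii) when $\beta \ne \emptyset$ I would show that $\Pi_{C_m}$ is not Fréchet-differentiable by exhibiting two directions along which its directional derivative has distinct linear expressions --- namely perturbations that push a tied singular value $|\sigma_i^*| = t^{\prime *}$ into the $\alpha$ versus the $\gamma$ regime --- and then conclude via the SVD reduction that $\Pi_{K_{m,n}}$ inherits this failure.
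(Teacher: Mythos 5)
First, a point of comparison: the paper does not prove this statement at all --- it is quoted verbatim from Ding et al.\ (\cite[Theorem 3]{Dingetal}) and used as a black box, so there is no in-paper proof to measure your argument against. Your outline does follow the route one would expect from that literature (reduction to the vector projector $\Pi_{C_n}$ acting on $(t,\sigma(x))$ via simultaneous SVD, then a divided-difference/L\"owner-type calculus producing $\Omega_1,\Omega_2,\Omega_3$), and the reduction step itself is sound if you replace ``Ky Fan / Hoffman--Wielandt'' by von Neumann's trace inequality, which is what forces the minimizer to share an ordered SVD with $x$.

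However, as a proof the proposal has genuine gaps at exactly the hard points. (1) Your differentiation mechanism --- ``select, locally around $(t,x)$, smooth branches of singular factors $u(x),v(x)$ within each group of coalescing singular values and apply the chain rule'' --- is not available: when singular values coalesce, individual singular vectors cannot be chosen even continuously in $x$; only invariant objects (the singular subspaces, or sums of rank-one projectors, attached to a group separated from the rest) vary smoothly, and for the zero group the left and right subspaces have different dimensions (this asymmetry is precisely what generates the $\Omega_3$ and $B_{bc'}$ blocks). A correct execution must either work with these invariant subspaces (e.g.\ via contour-integral representations) or first establish directional differentiability with an explicit formula and then upgrade to Fr\'echet differentiability using linearity of the directional derivative plus the global Lipschitz property of the projector; the chain rule through $(t',\sigma',u,v)$ as you describe it does not compile into a Fr\'echet derivative. (2) Relatedly, ``smoothness of $\Pi_{C_m}$ plus the SVD reduction yields differentiability'' is not a one-line implication: the map $x\mapsto\sigma(x)$ is itself nondifferentiable at repeated or zero singular values, so transferring differentiability between the symmetric vector map and the matrix map is a Lewis--Sendov-type theorem, not a corollary; this transfer is the substance of Ding et al.'s proof and must be argued, in both directions. (3) Your characterization ``differentiable iff $\beta=\emptyset$'' misses the boundary cases excluded by the strict inequalities in (ii): at $t=-\norm{x}_*$ with $x$ of full rank one has $\beta=\emptyset$ (since $q_0=0$ and all $\sigma_i>0$), yet $\Pi_{K_{m,n}}$ is not differentiable there because of the $\max\{\cdot,0\}$ kink in the vector formula of Proposition \ref{prop:proj_vector_cone}; so the ``only if'' direction needs a separate argument for $t=\norm{x}$ and $t=-\norm{x}_*$ rather than being subsumed by the $\beta$ criterion.
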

We need the following lemmas to prove the convergence of the derivative $\derD p_\mu(z_o,z)$.
\begin{lemma} \label{lemma2} \cite[Lemma 3]{Chen_Tseng}
For any symmetric matrix $x\in \mathbb{S}^n$, there exist $\eta>0$ and $\varepsilon>0$ such that $
\min\limits_{p\in \mathcal{O}_x} \norm{p-q} \leq \eta \norm{x-y}, \, \forall\, y$, and $\norm{y-x}\leq \varepsilon, \forall\, q\in \mathcal{O}_y.
$\end{lemma}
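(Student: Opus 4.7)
The plan is to fix an arbitrary reference element $p_0\in\calO_x$ and then build a nearby $p\in\calO_x$ out of $p_0$ by applying block-diagonal orthogonal rotations inside each eigenspace of $x$, exploiting the fact that the freedom in $\calO_x$ is exactly such block rotations. Let the distinct eigenvalues of $x$ be $\mu_1>\ldots>\mu_k$, let $\calI_j=\{i:\lambda_i(x)=\mu_j\}$, and let $2\delta=\min_{j\ne l}|\mu_j-\mu_l|$ be the minimum spectral gap of $x$. I would begin by choosing $\veps<\delta$; then Weyl's inequality $|\lambda_i(y)-\lambda_i(x)|\le\norm{y-x}\le\veps$ ensures that, for any $y$ with $\norm{y-x}\le\veps$, the eigenvalues $\{\lambda_i(y):i\in\calI_j\}$ lie in the open interval $(\mu_j-\delta,\mu_j+\delta)$ and these intervals are pairwise disjoint across $j$.

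Given $q\in\calO_y$, let $Q_j$ denote the submatrix of $q$ whose columns are indexed by $\calI_j$; since the columns of $q$ are eigenvectors of $y$ ordered as in $\lambda_f(y)$, each $Q_j Q_j^T$ is the spectral projector of $y$ onto the eigenvalues lying in $(\mu_j-\delta,\mu_j+\delta)$. Similarly, let $P_j$ be the corresponding submatrix of the fixed $p_0\in\calO_x$, so that $P_jP_j^T$ is the spectral projector of $x$ onto its $\mu_j$-eigenspace. By the Davis--Kahan $\sin\Theta$ theorem (or equivalently, by comparing the Riesz contour integrals of the resolvents of $x$ and $y$ along a circle enclosing only the $\mu_j$-group), the bound
\[
\norm{Q_jQ_j^T-P_jP_j^T}_F\le C_1\,\norm{x-y}
\]
holds with $C_1$ depending only on $\delta$. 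Next, I would let $R_j$ be the orthogonal polar factor of the $|\calI_j|\times|\calI_j|$ matrix $P_j^TQ_j$, i.e.\ if $P_j^TQ_j=U_j\Sigma_jV_j^T$ is an SVD, set $R_j=U_jV_j^T$. Then $R_j$ minimizes $\norm{P_jR-Q_j}_F$ over orthogonal $R$, and a direct computation with the principal angles $\theta_i$ between the column spaces of $P_j$ and $Q_j$ gives the two identities $\norm{P_jP_j^T-Q_jQ_j^T}_F^2=2\sum_i\sin^2\theta_i$ and $\norm{P_jR_j-Q_j}_F^2=2\sum_i(1-\cos\theta_i)$, so that $\norm{P_jR_j-Q_j}_F\le\norm{P_jP_j^T-Q_jQ_j^T}_F$ once $\veps$ is small enough to guarantee $\cos\theta_i\ge 0$.

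Assembling, I would set $R=\text{blkdiag}(R_1,\ldots,R_k)$ and $p=p_0R$; because $R$ is block-diagonal with blocks indexed by $\calI_j$, it commutes with $\Diag(\lambda_f(x))$, and hence $p\Diag(\lambda_f(x))p^T=p_0\Diag(\lambda_f(x))p_0^T=x$, confirming $p\in\calO_x$. Stacking the column blocks and using the two bounds above,
\[
\norm{p-q}_F^2=\sum_{j=1}^k\norm{P_jR_j-Q_j}_F^2\le\sum_{j=1}^k\norm{P_jP_j^T-Q_jQ_j^T}_F^2\le k\,C_1^2\,\norm{x-y}^2,
\]
which, upon translating to the operator norm in finite dimension, yields the desired $\eta$. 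The main obstacle is the perturbation step that produces $C_1$: one must verify that the spectral projectors onto the $\mu_j$-group are Lipschitz in the matrix with a constant controlled by the spectral gap $\delta$, uniformly for $\norm{y-x}\le\veps$; everything else (the polar-factor comparison and the block-rotation assembly) is essentially elementary linear algebra on the principal angles.
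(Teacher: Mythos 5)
Your argument is correct, but there is nothing in the paper to compare it against: the paper does not prove this statement at all, it simply imports it as \cite[Lemma 3]{Chen_Tseng}. Your proposal is a legitimate self-contained proof of the cited result. The structure is sound: with $2\delta$ the minimal gap between distinct eigenvalues of $x$ and $\varepsilon<\delta$ (in practice take $\varepsilon\le\delta/2$ so the resolvents stay uniformly bounded on the contour), Weyl's inequality separates the eigenvalue groups of $y$, so $Q_jQ_j^T$ is indeed the spectral projector of $y$ for the group near $\mu_j$ and the Riesz-integral/Davis--Kahan bound $\norm{Q_jQ_j^T-P_jP_j^T}_F\le C_1\norm{x-y}$ holds with $C_1$ depending only on $\delta$ -- this is the one step you flagged as needing verification, and it is exactly the standard resolvent estimate, so there is no gap there. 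The Procrustes step is also fine, and in fact slightly simpler than you state: the cosines of the principal angles are the singular values of $P_j^TQ_j$, hence automatically nonnegative, so $1-\cos\theta_i\le\sin^2\theta_i$ needs no extra smallness of $\varepsilon$. Finally, since each $R_j$ acts inside an eigenblock where $\Diag(\lambda_f(x))$ is a multiple of the identity, $p=p_0R\in\mathcal{O}_x$ with the ordering of $\lambda_f(x)$ preserved, and the blockwise Frobenius estimate gives $\min_{p\in\mathcal{O}_x}\norm{p-q}\le\eta\norm{x-y}$ after absorbing norm-equivalence constants into $\eta$. So the proposal stands on its own; it simply replaces the paper's external citation by an explicit perturbation argument (spectral projectors plus per-block orthogonal alignment), which is a reasonable and standard way to establish the Chen--Tseng lemma.
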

From $\sigma_k(z)=\min\limits_{\mathrm{rank}(y) < k} \norm{z-y}$ (see \cite[Chapter III]{Bhatia}), we can derive the following lemma.
\begin{lemma}\label{lemma3}  The $k$-th singular value $\sigma_k(\cdot)$ of a matrix in $\mathbb{R}^{m\times n}$ satisfied 
$$ \left|\sigma_k(z_1)- \sigma_k(z_2)\right|\leq \norm{z_1 - z_2} \,\text{for all matrices}\,  z_1,z_2 \,\text{in}\, \mathbb{R}^{m\times n}.$$ 
\end{lemma}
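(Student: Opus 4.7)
The plan is to prove the Lipschitz continuity of the $k$-th singular value by directly exploiting the variational characterization $\sigma_k(z)=\min_{\mathrm{rank}(y)<k}\norm{z-y}$ cited from Bhatia, which expresses $\sigma_k$ as an infimum of operator-norm distances from the set of matrices of rank strictly less than $k$. Since the operator norm satisfies the triangle inequality, any such infimum inherits Lipschitz-1 behaviour with respect to the operator norm.

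Concretely, I would first fix $z_1,z_2\in\mathbb{R}^{m\times n}$ and let $y^\star$ be a matrix with $\mathrm{rank}(y^\star)<k$ that attains (or, if one prefers to avoid invoking attainment, approximates within $\varepsilon$) the minimum in $\sigma_k(z_1)=\norm{z_1-y^\star}$. Then $y^\star$ is a feasible competitor in the minimization defining $\sigma_k(z_2)$, so the triangle inequality gives
\[
\sigma_k(z_2)\;\leq\;\norm{z_2-y^\star}\;\leq\;\norm{z_2-z_1}+\norm{z_1-y^\star}\;=\;\norm{z_2-z_1}+\sigma_k(z_1).
\]
Reversing the roles of $z_1$ and $z_2$ yields the matching inequality, and combining the two gives $\lvert\sigma_k(z_1)-\sigma_k(z_2)\rvert\leq\norm{z_1-z_2}$, which is exactly the claim. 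If one worries about whether the minimum is actually attained, the same argument works with an $\varepsilon$-minimizer and then letting $\varepsilon\to 0$; no real obstacle arises.

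There is no genuinely hard step here: once the Eckart--Young-type identity for $\sigma_k$ is in hand, Lipschitzness is essentially a one-line consequence of the triangle inequality applied to the operator norm. The only thing to be mildly careful about is that the norm in the cited identity and in the statement is the operator norm (largest singular value), matching the notation $\norm{\cdot}$ already fixed earlier in the paper, so no conversion between norms is needed.
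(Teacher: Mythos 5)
Your proof is correct and follows exactly the route the paper intends: the paper simply states that the lemma is derived from the characterization $\sigma_k(z)=\min_{\mathrm{rank}(y)<k}\norm{z-y}$, and your triangle-inequality argument with a (near-)minimizer $y^\star$ is the standard way to fill in that derivation. No gaps; the attainment issue you mention is harmless since the minimum is in fact attained by the truncated SVD.
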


\subsubsection{Verifying the requirement of Theorem \ref{theorem:verify} (ii)}
Let  $(z_o^*,z^*)$ be a differentiable point of the projector onto $K_{m,n}$ ( or $K_{m,n}^\sharp$), and let $(z_o, z, \mu)$ go to $(z_o^*,z^*,0)$.  We now verify the following expression which is the requirement of Theorem \ref{theorem:verify} (ii)
\begin{equation} \label{eq:checkKmn}
\norm{\derD P_\mu(z_o,z)-\derD \Pi_K(z_o^*,z^*)} = O(\norm{(z_o-z_o^*,z-z^*,\mu)}). 
\end{equation}
\begin{proof} 
Let $(t,x) = p_{\mu}(z_o,z)$ with $x=u[\Diag(\sigma)\quad 0] v^T$, $z=u[ \Diag(\sigma^o) \quad 0] v^T$ and  $(t,\sigma)$, $(z_o,\sigma^o)$ satisfying \eqref{eq:smooth_equ_matrix}.  Let $u^*, v^*$ be the limit points of $u,v$. When $\mu\to 0$, we have $\sigma^o \to \sigma_f(z^*)=\zeta$ and  $z \to z^* = u^*[ \Diag(\zeta) \quad 0] (v^*)^T$. Totally similar to Proposition \ref{prop:proj_vector_cone}, we can prove $(t,\sigma) \rightarrow (t^*,\sigma^*) = \Pi_{C_n}(z_o^*,\zeta)$, where
$$ \sigma_i^* = \begin{cases} 
t^* > 0 &\mbox{if}\; i\leq \mathbf k^*,\\ 
\zeta_i < t^* &\mbox{if}\; i> \mathbf k^*.
\end{cases}
$$
Then $x \rightarrow x^*=u^*[\Diag(\sigma^*) \quad 0] (v^*)^T$.
The second derivative of the barrier \eqref{eq:barrierKmn} is

\begin{align*}
\derD^2 f(t,x): ((\mathsf h_o,\mathsf h); (\mathsf k_o, \mathsf k)) \mapsto \trace\begin{pmatrix}
tI_n & x^T\\
x &t I_m
\end{pmatrix}^{-1} 
\begin{pmatrix}
\mathsf h_o I_n & \mathsf h^T\\
\mathsf h& \mathsf h_oI_m
\end{pmatrix} 
\begin{pmatrix}
tI_n&x^T\\
x & tI_m
\end{pmatrix}^{-1} 
\begin{pmatrix}
\mathsf k_o I_n & \mathsf k^T \\
\mathsf k & \mathsf k_o I_m
\end{pmatrix}.
\end{align*}

We note that 
\begin{align*}
\begin{split}
\begin{pmatrix}
tI_n & x^T\\
x &t I_m
\end{pmatrix}^{-1}&=\begin{pmatrix}
v&0\\0&u
\end{pmatrix} 
\begin{pmatrix}
t(t^2 I_n - \Sigma^T \Sigma)^{-1} & -\Sigma^T(t^2 I_n - \Sigma\Sigma^T)^{-1}\\
-(t^2 I_m - \Sigma\Sigma^T)^{-1}\Sigma & t(t^2 I_m -\Sigma\Sigma^T)^{-1}
\end{pmatrix}
\begin{pmatrix}
v^T&0\\0&u^T
\end{pmatrix}\\
&=\begin{pmatrix}
v_1&v_2&0\\
0&0&u
\end{pmatrix} \begin{pmatrix}
D_1 &0& \Delta\\
0&D_2&0\\
\Delta&0&D_1
\end{pmatrix} 
\begin{pmatrix}
v_1^T & 0 \\
v_2^T & 0 \\
0 &u^T
\end{pmatrix},
\end{split}
\end{align*}
where $D_1=\Diag\bracket{\left. \cfrac{t}{t^2-\sigma_i^2}\right|_{i=1,\ldots,m}}, D_2=\cfrac{1}{t}I_{n-m},\Delta=\bracket{\left.\cfrac{-\sigma_i}{t^2 -\sigma_i^2} \right|_{i=1,\ldots,m}}$, $v_1$ contains the first $m$ columns of $v$ and $v_2$ contains the remaining $n-m$ columns of $v$. Then, 
\begin{align*}
\begin{split}
&\derD^2 f(t,x)[(\mathsf h_o,\mathsf h); (\mathsf k_o, \mathsf k)]\\
&=\trace\begin{pmatrix}
D_1&0&\Delta\\0&D_2&0\\\Delta&0&D_1
\end{pmatrix} 
\begin{pmatrix}
\mathsf h_o I_m&0&\bar{\mathsf h}_1^T\\ 0 &\mathsf h_o I_{n-m}&\bar{\mathsf h}_2^T\\\bar{\mathsf h}_1 & \bar{\mathsf h}_2 & \mathsf h_o I_m
\end{pmatrix} 
\begin{pmatrix}
D_1&0&\Delta\\0&D_2&0\\\Delta&0&D_1
\end{pmatrix}
\begin{pmatrix}
\mathsf k_oI_m&0&\bar{\mathsf k}_1^T\\ 0 &h_o I_{n-m}&\bar{\mathsf k}_2^T\\\bar{\mathsf k}_1 & \bar{\mathsf k}_2 & \mathsf k_o I_m
\end{pmatrix}\\
&=\trace\begin{pmatrix}
\mathsf h_o(D_1^2+\Delta^2) + \Delta\bar{\mathsf h}_1D_1+D_1\bar{\mathsf h}_1^T\Delta &\Delta\bar{\mathsf h}_2D_2&2\mathsf h_o\Delta D_1+D_1\bar{\mathsf h}_1^TD_1+\Delta\bar{\mathsf h}_1 \Delta\\
D_2\bar{\mathsf h}_2^T\Delta & \mathsf h_oD_2^2 &D_2\bar{\mathsf h}_2^TD_1\\
2 \mathsf h_o\Delta D_1+D_1\bar{\mathsf h}_1D_1+\Delta\bar{\mathsf h}_1^T\Delta & D_1\bar{H}_2 D_2 & \mathsf h_o(D_1^2+\Delta^2)+\Delta\bar{\mathsf h}_1^T D_1+ D_1 \bar{\mathsf h}_1 \Delta
\end{pmatrix} \\
&\qquad\qquad\times\begin{pmatrix}
\mathsf k_o I_m &0 &\bar{\mathsf k}_1^T \\ 0& \mathsf k_o I_{n-m} & \bar{\mathsf k}_2^T\\ \bar{K}_1 & \bar{\mathsf k}_2 & \mathsf k_o I_m
\end{pmatrix}.
\end{split}
\end{align*}
Hence we get
\begin{align}\label{temp81}
\begin{split}
&\derD^2 f(t,x)[(\mathsf h_o,\mathsf h); (\mathsf k_o, \mathsf k)]=\mathsf k_o\bracket{\mathsf h_o(2\trace(D_1^2 + \Delta^2) + \norm{D_2}_F^2) + 4\trace (\Delta \bar{\mathsf h}_1D_1)}  \\ 
&\qquad\qquad\qquad+ 2\langle \bar{\mathsf k}_1, 2 \mathsf h_o \Delta D_1 + D_1 \bar{\mathsf h}_1 D_1 + \Delta \bar{\mathsf h}_1^T\Delta \rangle+ 2\langle \bar{\mathsf k}_2, D_1 \bar{\mathsf h}_2 D_2\rangle,
\end{split}
\end{align}
where $\bar{\mathsf h}_i = u^T \mathsf h v_i$ and $\bar{\mathsf k}_i=u^T \mathsf k v_i$, $i =1,2$. 

Let $(w_o,w)\in \mathbb{R}\times\mathbb{R}^{m\times n}$ be fixed. If $\derD P_\mu(z_o,z)[w_o,w]=(\mathsf h_o,\mathsf h)$, then $$(w_o,w)=(\mathsf h_o,\mathsf h)+\mu^2 \derD^2 f(t,x)[\mathsf h_o,\mathsf h],$$ and hence for any $(\mathsf k_o,\mathsf k) \in \mathbb{R}\times\mathbb{R}^{m\times n}$, by formula \eqref{temp81}, we have
\begin{align*}
\begin{split} 
&\mathsf k_o(w_o-\mathsf h_o) + \langle \bar{\mathsf k}_1, \bar{w}_1 - \bar{\mathsf h}_1\rangle + \langle \bar{\mathsf k}_2, \bar{w}_2 - \bar{\mathsf h}_2\rangle\\ &= \langle (w_o-\mathsf h_o, w-\mathsf h) , (\mathsf k_o,\mathsf k)\rangle =\mu^2 \derD^2 f(t,x)[(\mathsf h_o,\mathsf h); (\mathsf k_o,\mathsf k)]\\
&=\mu^2 \mathsf k_o\bracket{\mathsf h_o(2\trace(D_1^2 + \Delta^2) + \norm{D_2}_F^2) + 4\trace (\Delta \bar{\mathsf h}_1D_1)} \\
&\qquad\qquad\qquad+ 2\mu^2\langle \bar{\mathsf k}_1, 2\mathsf h_o \Delta D_1 + D_1 \bar{\mathsf h}_1 D_1 + \Delta \bar{\mathsf h}_1^T \Delta\rangle + 2 \mu^2\langle \bar{\mathsf k}_2, D_1 \bar{\mathsf h}_2 D_2\rangle,
\end{split}
\end{align*} 
where  $\bar{w}_i=u^T w v_i$, $i =1,2$.
Thus for every fixed $(w_o,w)$, we get 
\begin{align} \label{eq:der_Kmn}
\begin{split}
w_o - \mathsf h_o &= \mu^2 \bracket{\mathsf h_o(2\trace(D_1^2 + \Delta^2) + \norm{D_2}_F^2) + 4\trace (\Delta \bar{\mathsf h}_1D_1)},\\
\bar{w}_1 - \bar{\mathsf h}_1& = 2\mu^2 \bracket{2 \mathsf h_o \Delta D_1 + D_1 \bar{\mathsf h}_1 D_1 + \Delta (\bar{\mathsf h}_1)^T \Delta},\\
\bar{w}_2 - \bar{\mathsf h}_2 &= 2\mu^2 D_1 \bar{\mathsf h}_2 D_2.
\end{split}
\end{align}
By Theorem \ref{theorem:diff_proj_oper_norm}, the Euclidean projector $\Pi_{K_{m,n}}$ is differentiable at $(z_o^*,z^*)$ if and only if
\begin{itemize}
\item[(i)] $z_o^*> \|z^*\|$,
\item[(ii)] $\|z^*\|_2 > z_o^* > -\|z^*\|_*$ but $t^*(z_o^*,\sigma_f(z^*))$, which is defined in \eqref{eq:proj_l_infty}, is not a singular value of $z^*$,
\item[(iii)] $z_o^*<-\|z^*\|_*$.
\end{itemize}

We consider the first case $z_o^* > \| z^*\|$, i.e., $(z_o^*,z^*)$ lies in the interior of $K_{m,n}$. In this case $(t,x) \to \Pi_{K_{m,n}}(z_o^*,z^*)=(z_o^*,z^*)$ and $\derD \Pi_{K_{m,n}}(z_o^*,z^*)=\mathbf I$. Thus, 
$$\norm{(\mathsf h_o,\mathsf h)-(w_o,w)} = \norm{\mu^2 \nabla^2 f(t,x)(\mathsf h_o,\mathsf h)} = O(\mu)=O(\norm{(z_o-z_o^*,z-z^*,\mu)}).
$$
We now consider the second case. Denote $\bar{w}^*=(u^*)^T w v^*$. From the third equation of \eqref{eq:der_Kmn}, we imply that for $i=1,\ldots,m$, $j=1,\ldots,n-m$, we have
$\bar{w}_{i, m+j} -\bar{\mathsf h}_{i,m+j} = 2\cfrac{\mu^2}{t^2 - \sigma_i^2} \bar{\mathsf h}_{i,m+j}$. Therefore, we get $\bar{\mathsf  h}_{i,m+j} = \cfrac{1}{1+ 2\frac{\mu^2}{t^2 - \sigma_i^2}} \bar{w}_{i,m+j}$. Furthermore, from \eqref{eq:smooth_equ_matrix}, we deduce 
\begin{align}\label{limit:l1}
\begin{split}
\cfrac{t^2 - \sigma_i^2}{\mu^2}=\cfrac{2\sigma_i}{\sigma_i^o -\sigma_i} \rightarrow \cfrac{2t^*}{\zeta_i-t^*} \;\text{for}\; i=1,\ldots,\mathbf k^*, \; \text{and} \\
\cfrac{\mu^2}{t^2 - \sigma_i^2} \rightarrow \cfrac{0}{(t^*)^2 - (\zeta_i)^2 } =0 \; \text{for}\, i=\mathbf k^*+1,\ldots,m.
\end{split}
\end{align} 
Hence, for $i=1,\ldots,m$, $j=1,\ldots,n-m$, we have
\begin{equation}\label{limit:l2}
\bar{\mathsf h}_{i,m+j} \rightarrow \begin{cases} 
\cfrac{t^*}{\zeta_i}\bar{w}^*_{i,m+j} &\mbox{if} \; i =1,\ldots,\mathbf k^*\\ \bar{w}^*_{i,m+j} &\mbox{if}\; i=\mathbf k^*+1,\ldots,m. \end{cases}
\end{equation}
For $i,j=1,\ldots,m, i\ne j$,  denote $\rho_{ij}=\cfrac{t^2 -\sigma_i^2}{2\mu^2} \times (t^2 - \sigma_j^2)$. The $(i,j)$-th, $(j,i)$-th entries of the second equation of \eqref{eq:der_Kmn} give
$\bar{w}_{ij}-\bar{\mathsf h}_{ij}=\rho_{ij}^{-1}(\bar{\mathsf h}_{ij} t^2 + \bar{\mathsf h}_{ji} \sigma_i \sigma_j)$ and $\bar{w}_{ji}-\bar{\mathsf h}_{ji}=\rho_{ij}^{-1}(\bar{\mathsf h}_{ji} t^2 + \bar{\mathsf h}_{ij} \sigma_i \sigma_j).
$ Solving these equations imply that for $i,j=1,\ldots,m, i\ne j$, we have
$$
\bar{\mathsf h}_{ij}= \cfrac{\rho_{ij}}{(\rho_{ij}+t^2)^2 -\sigma_i^2 \sigma_j^2}(\bar{w}_{ij}(\rho_{ij} + t^2) - \bar{w}_{ji}\sigma_i \sigma_j),
\bar{\mathsf h}_{ji}= \cfrac{\rho_{ij}}{(\rho_{ij}+t^2)^2 -\sigma_i^2 \sigma_j^2}(\bar{w}_{ji}(\rho_{ij} + t^2) - \bar{w}_{ij}\sigma_i \sigma_j).$$
For $i=1,\ldots,\mathbf k^*, j=\mathbf k^*,\ldots,m$, we have
\begin{align} \label{limit:l3}
\rho_{ij} \rightarrow \cfrac{t^*((t^*)^2-(\zeta_j)^2)}{\zeta_i-t^*}=\rho_{ij}^*;
\end{align}
hence
\begin{align}\label{limit:l4}
\bar{\mathsf h}_{ij}\to \cfrac{\rho_{ij}^* }{(\rho_{ij}^* + (t^*)^2)^2 -(t^*)^2(\zeta_j)^2} (\bar{w}^*_{ij}(\rho_{ij}^* + (t^*)^2) - \bar{w}^*_{ji}t^* \zeta_j)= \cfrac{t^*\zeta_i - \zeta_j^2}{\zeta_i^2 - \zeta_j^2 }\bar{w}^*_{ij} - \cfrac{\zeta_j(\zeta_i - t^*)}{\zeta_i^2 - \zeta_j^2}\bar{w}^*_{ji},
\end{align}
 \begin{align} \label{limit:l5}
\bar{\mathsf h}_{ji}\rightarrow   \cfrac{t^*\zeta_i - \zeta_j^2}{\zeta_i^2 - \zeta_j^2 }\bar{w}^*_{ji} - \cfrac{\zeta_j(\zeta_i - t^*)}{\zeta_i^2 - \zeta_j^2}\bar{w}^*_{ij}.
\end{align}
For $i,j=1,\ldots,\mathbf k^*, i\ne j$, we have
\begin{align} \label{limit:l51}
\rho_{ij}\to 0, \quad \frac{\rho_{ij}}{2\mu^2}=\cfrac{t^2 -\sigma_i^2}{2\mu^2} \times\cfrac{ (t^2 - \sigma_j^2)}{2\mu^2} \rightarrow \frac{(t^*)^2}{(\zeta_i-t^*)(\zeta_j-t^*)};
\end{align}
\begin{align} \label{l52}
\begin{split}
\bar{\mathsf h}_{ij}&=\cfrac{\frac{\rho_{ij}}{2\mu^2}}{\rho_{ij} \frac{\rho_{ij}}{2\mu^2} + 2 t^2 \frac{\rho_{ij}}{2\mu^2}+ t^2 \frac{t^2-\sigma_i^2}{2\mu^2} + \sigma_i^2 \frac{t^2 - \sigma_j^2}{2\mu^2}} (\bar{w}_{ij}(\rho_{ij} + t^2) - \bar{w}_{ji}\sigma_i \sigma_j)\to  \frac{t^*}{\zeta_i + \zeta_j} (\bar{w}^*_{ij} - \bar{w}^*_{ji}).
\end{split}
\end{align}
For  $i=\mathbf k^*+1,\ldots,m$, the $i$-th diagonal entry of the second equation in \eqref{eq:der_Kmn} is 
\begin{align}\label{limit:l6}
\bar{w}_{ii} - \bar{\mathsf h}_{ii} = 2\bracket{\cfrac{\mu}{t^2 - \sigma_i^2}}^2\bracket{-2 \mathsf h_o t \sigma_i + \bar{\mathsf h}_{ii}(t^2+\sigma_i^2)} \rightarrow 0,
\end{align}
which shows that $\bar{\mathsf h}_{ii}\rightarrow \bar{w}^*_{ii}$. Similarly, 
\begin{align} \label{limit:l7}
 \bar{\mathsf h}_{ij}\rightarrow \bar{w}^*_{ij}\; \text{for}\; i,j=\mathbf k^*+1,\ldots,m, i \ne j. 
\end{align}
For  $i=1,\ldots,\mathbf k^*$, the $i$-th diagonal entry of the second equation in \eqref{eq:der_Kmn} is 
$$\bar{w}_{ii} - \bar{\mathsf h}_{ii} = 2\bracket{\cfrac{\mu}{t^2 - \sigma_i^2}}^2\bracket{-2 \mathsf h_o t \sigma_i + \bar{\mathsf h}_{ii}(t^2+\sigma_i^2)},
$$
which implies
\begin{align} \label{limit:l8}
 2t^2(\bar{\mathsf h}_{ii} - \mathsf h_o)=\cfrac12 \bracket{\cfrac{t^2 - \sigma_i^2}{\mu^2}}^2 \mu^2 \bracket{\bar{w}_{ii} - \bar{\mathsf h}_{ii}} + 2 \mathsf h_o t (\sigma_i -t) + \bar{\mathsf h}_{ii} (t^2 -\sigma_j^2) \rightarrow 0.
\end{align}
Therefore, 
\begin{equation} \label{limit:l9}
\bar{\mathsf h}_{ii} - \mathsf h_o\rightarrow 0\; \text{for}\; i=1,\ldots,\mathbf k^*.
\end{equation}
Adding the first equation to the sum of the diagonal entries in the second equation of \eqref{eq:der_Kmn},
\begin{align}\label{limit:l11}
\begin{split}
&w_o-\mathsf h_o + \sum\limits_{i=1}^m (\bar{w}_{ii} - \bar{\mathsf h}_{ii}) \\
&= \mu^2 \mathsf h_o \cfrac{n-m}{t} + 2 \sum\limits_{i=1}^m\bracket{\cfrac{\mu}{t^2 - \sigma_i^2}}^2 \bracket{\mathsf h_o(t^2 + \sigma_i^2 - 2t \sigma_i) - \bar{\mathsf h}_{ii}(2t\sigma_i - t^2 -\sigma_i^2)}\\
&=\mu^2 \mathsf h_o \cfrac{n-m}{t^2} + 2\sum\limits_{i=1}^m \bracket{\cfrac{\mu}{t+\sigma
_i}}^2(\mathsf h_o - \bar{\mathsf h}_{ii}) \rightarrow 0.
\end{split}
\end{align}
Thus $\mathsf h_o + \sum\limits_{i=1}^m \bar{\mathsf h}_{ii} \rightarrow w_o + \sum\limits_{i=1}^m \bar{w}^*_{ii}.$ Together with $\bar{\mathsf h}_{ii} \rightarrow \bar{w}^*_{ii}$ for $i=\mathbf k^*+1,\ldots,m$ and $\bar{\mathsf h}_{ii} - \mathsf h_o \rightarrow 0$ for $i=1,\ldots,\mathbf k^*$, we conclude that for $i=1,\ldots,\mathbf k^*$,
\begin{align} \label{limit:l12}
(\mathbf k^*+1) \mathsf h_o \rightarrow w_o + \sum\limits_{i=1}^{\mathbf k^*} \bar{w}^*_{ii},\quad  \bar{\mathsf h}_{ii} \rightarrow \cfrac{1}{\mathbf k^*+1} \bracket{w_o + \sum\limits_{i=1}^{\mathbf k^*} \bar{w}^*_{ii}}.
\end{align}
In summary, 
$\mathsf h_o \rightarrow \mathsf h_o^* =\cfrac{1}{\mathbf k^*+1}\bracket{w_o + \sum\limits_{i=1}^{\mathbf k^*} \bar{w}^*_{ii}},
$
and 
$$\bar{\mathsf h}_{ij}\rightarrow \bar{\mathsf h}^*_{ij}=\begin{cases}
\cfrac{1}{\mathbf k^*+1} \bracket{w_o + \sum\limits_{i=1}^{\mathbf k^*} \bar{w}^*_{ii}} &\mbox{if}\; i,j=1,\ldots,\mathbf k^*,i=j,\\
\cfrac{t^*}{\zeta_i + \zeta_j} (\bar{w}^*_{ij} - \bar{w}^*_{ji}) &\mbox{if}\; i,j=1,\ldots,\mathbf k^*, i\ne j,\\
\cfrac{t^*\zeta_i - \zeta_j^2}{\zeta_i^2 - \zeta_j^2 }\bar{w}^*_{ij} - \cfrac{\zeta_j(\zeta_i - t^*)}{\zeta_i^2 - \zeta_j^2}\bar{w}^*_{ji} & \mbox{if}\; i=1,\ldots,\mathbf k^*,\mbox{and}\; j=\mathbf k^*+1,\ldots,m\\ 
\cfrac{t^*\zeta_j - \zeta_i^2}{\zeta_j^2 - \zeta_i^2 }\bar{w}^*_{ij} - \cfrac{\zeta_i(\zeta_j - t^*)}{\zeta_j^2 - \zeta_i^2}\bar{w}^*_{ji} &\mbox{if}\; i=\mathbf k^*+1,\ldots,m,\mbox{and}\; j=1,\ldots,\mathbf k^*\\
\bar{w}^*_{ij}&\mbox{if}\; i=\mathbf k^*+1,\ldots,m,j=\mathbf k^*+1,\ldots,n\\
\cfrac{t^*}{\zeta_i} \bar{w}^*_{ij}&\mbox{if}\; i=1,\ldots,\mathbf k^*, j= m+1,\ldots, n.
\end{cases}
$$
Therefore, a limit of $\derD P_\mu(z_o,z)$ has the form $T^*(w_o,w)=(\mathsf h_o^*, u^* \bar{\mathsf h}^* (v^*)^T)$, which can be verified to be the derivative of the projector onto $K_{m,n}$ at $(z_o^*,z^*)$  by Theorem \ref{theorem:diff_proj_oper_norm}. 
 
Lipschitz continuity of a smoothing approximation with respect to $\mu$ ( see Theorem \ref{theorem:bsa}) and that of the projector imply 
\begin{align*}
 \norm{(t,\sigma) - (t^*,\sigma^*)}& = \norm{p_\mu(z_o,\sigma^o)-\Pi_{K_{m,n}}(z_o^*,\zeta)}\\
&\leq\norm{p_\mu(z_o,\sigma^o)-p_0(z_o,\sigma^o)}+\norm{\Pi_{K_{m,n}}(z_o,\sigma^o)-\Pi_{K_{m,n}}(z_o^*,\zeta)}\\
&= O(\norm{(z_o-z_o^*,\sigma^o-\zeta,\mu)}.
\end{align*} 
Furthermore, by Lemma \ref{lemma3} we have $\norm{\sigma^o - \zeta}=O(\norm{z-z^*})$. Therefore, the limits in \eqref{limit:l1} satisfy
$$ \frac{\sigma_i}{\sigma_i^o - \sigma_i} -\frac{t^*}{\zeta_i - t^*} = \frac{(\sigma_i -t^*)\zeta_i + t^*(\zeta_i - \sigma_i^o)}{(\sigma_i^o-\sigma_i)(\zeta_i -t^*)} = O(\norm{(z_o-z_o^*,z-z^*,\mu)}), \rm{for} \, i=1,\ldots,\mathbf k^*, \text{and}
$$ 
  $$ \frac{\mu^2}{t^2 - \sigma_i^2} = O(\mu^2)= O(\norm{(z_o-z_o^*,z-z^*,\mu)}), i=\mathbf k^*+1,\ldots,m.
$$
Moreover, using Lemma \ref{lemma2} we deduce there exist $\eta_1,\eta_2, \varepsilon_1, \varepsilon_2 >0$ such that 
$$ \forall z, \norm{z-z^*}<\varepsilon_1, \forall u \in \mathcal{O}(zz^T), \exists u^* \in \mathcal{O}(z^*(z^*)^T)\; \text{such that}\; \norm{u-u^*} \leq \eta_1\norm{z-z^*},$$ 
$$ \forall z, \norm{z-z^*}<\varepsilon_2, \forall v \in \mathcal{O}(z^Tz), \exists v^* \in \mathcal{O}((z^*)^Tz^*)\; \text{such that}\; \norm{v-v^*} \leq \eta_2\norm{z-z^*}.$$ 
In company with the fact $T^*$ is independent of the choice $u^*,v^*$, we can choose $u^*,v^*$ such that $ \norm{\bar{w}-\bar{w}^*}= O(\norm{(z_o-z_o^*,z-z^*,\mu)})$. 
Therefore, the limit in \eqref{limit:l2} satisfy
\[ \begin{cases}  \norm{\bar{h}_{i,m+j}-\cfrac{t^*}{\zeta_i}\bar{w}^*_{i,m+j} }= O(\norm{(z_o-z_o^*,z-z^*,\mu)}) &\mbox{if} \; i =1,\ldots,\mathbf k^*\\
\norm{\bar{h}_{i,m+j}- \bar{w}^*_{i,m+j}}= O(\norm{(z_o-z_o^*,z-z^*,\mu)})&\mbox{if}\; i=\mathbf k^*+1,\ldots,m. \end{cases}
\]
Totally similarly, we can prove that all of the involving limits to finding limits of $\mathsf h_o,\bar{\mathsf h}$ in  \eqref{limit:l3}--\eqref{limit:l12}, which have the form $lhs \to rhs$, satisfy $\norm{lhs-rhs}= O(\norm{(z_o-z_o^*,z-z^*,\mu)})$. This leads to $ \norm{\bar{\mathsf h} -\bar{\mathsf h}^*} = O(\norm{(z_o-z_o^*,z-z^*,\mu)}.$
We then get Expression \eqref{eq:checkKmn}.

Now we consider case $z_o^* < - \norm{z^*}_*$, i.e., $-(z_o^*,z^*)\in\inter K_{(m,n)}^\sharp$. We deduce from \eqref{eq:smthKmn} that $ \derD P^\sharp_\mu(z_o,z) = \mathbf I - \derD P_\mu(-z_o,-z).$
Furthermore,  it follows from $(-z_o,-z)\to (-z_o^*,-z^*)\in \inter (K^\sharp)$ that 
$P^\sharp_\mu(-z_o,-z) \to \Pi_{K^\sharp}(-z_o^*,-z^*)=(-z_o^*,-z^*)
$
and  
\begin{align*}
\begin{split}
\norm{\mathbf I -\derD P^\sharp_\mu(-z_o,-z) }& =\norm{I- (I + \mu^2 \nabla^2 f^\sharp( P^\sharp_\mu(-z_o,-z)))^{-1}}\\
&=\norm{(I + \mu^2 \nabla^2 f^\sharp(P^\sharp_\mu(-z_o,-z)))^{-1} ( \mu^2 \nabla^2 f^\sharp(P^\sharp_\mu(-z_o,-z))) }= O(\mu).
\end{split}
\end{align*}
Hence 
$\norm{\derD P_\mu(z_o,z)-\mathbf 0}=\norm{\mathbf I -\derD P^\sharp_\mu(-z_o,-z)}= O(\mu)= O(\norm{(z_o-zo^*,z-z^*,\mu)}).$ 
We now verify expression \eqref{eq:checkKmn} for $K_{m,n}^\sharp$. By Moreau decomposition 
$(z_o^*,z^*)=\Pi_{K^\sharp}(z_o^*,z^*) - \Pi_{K}(-z_o^*,-z^*),
$
we imply that the projector onto $K^\sharp_{m,n}$ is differentiable at $(z_o^*,z^*)$ if and only if projector onto $K_{m,n}$ is differentiable at $(-z_o^*,-z^*)$. On the other hand, by the result for $K_{m,n}$, we have 
$$\norm{\derD P_\mu(-z_o,-z) - \derD \Pi_{K_{m,n}}(-z_o^*,-z^*)} = O(\norm{(z_o-z_o^*,z-z^*,\mu)}).$$ Therefore, the result follows from $\derD P_\mu^\sharp(z_o,z)=\mathbf I-\derD P_\mu(-z_o,-z)$. \QEDB
\end{proof}

\subsubsection{The equivalence of the differentiability of the projection and  the strict complementarity}
We now prove the remaining part of Theorem \ref{theorem:verify} that is the equivalence of the differentiability of the projection at $(z_o^*,z^*)=(x_o^*,x^*)-(y_o^*,y^*)$ and the strict complementarity of $((x_o^*,x^*),(y_o^*,y^*))$. Here $((x_0^*,x^*),(y_o^*,y^*))$ is a pair of the solutions of the VI.
\begin{proof}
The cases $(x_0^*,x^*) = 0$ or $(x_0^*,x^*)\in \inter(K_{m,n})$ are trivial. We consider non-trivial case, i.e.,$(x_0^*,x^*) \ne 0$ and $(x_0^*,x^*)\not\in \inter(K_{m,n})$.  From  \cite[Section 6.3]{Chua_Hien} we have
\[x^*=u^* [\Diag(\sigma_1^*,\ldots,\sigma_m^*)\quad  0] (v^*)^T, y^*=u^*[\Diag(\tau_1^*,\ldots,\tau_m^*)\quad  0](v^*)^T,
\]
where $x_0^*=\sigma_1^*=\ldots=\sigma_r^* > \sigma_{r+1}^*\geq \ldots\geq \sigma_{m+1}^*=0$
and 
$\tau_1^*\leq \ldots\leq \tau_{r^\sharp}^*<\tau^*_{r^\sharp+1}=\ldots=\tau^*_{m+1}=0,y_0^*=-\sum\tau_i^* 
$
for some $r,r^\sharp \in \{ 1,\ldots,m\}, r\geq r^\sharp$. 
By \cite[Example 5.7]{deSa}, we have 
\[\mathcal{F}_{K}=\left\{(x_0,x): x= u^* \begin{pmatrix}
x_0 I_r & 0\\ 0 & M
\end{pmatrix} (v^*)^T, M\in R^{(m-r)\times (n-r)}, \norm{M}\leq x_0 \right\}
\] is a face of $K_{m,n}$ containing $(x_o^*,x^*)$. This face is with respect to the standard face $$ S_r^\infty =\left\{ (x_0,\bar{x})\in C_n: \bar{x}_i=x_0 \,\text{for}\, 1\leq i\leq r\right\}.$$
By \cite[Theorem 6.2]{deSa}, $(x_0^*,x^*) \in \relint (\mathcal{F}_{K})$. 
Similarly, by \cite[Example 5.6]{deSa}, we have  
\[\mathcal{F}_{K^\sharp}=\left\{(y_0,y):y=u^*\begin{pmatrix}
-N & 0\\
0 & 0
\end{pmatrix} (v^*)^T, N\in S_+^{r^\sharp}, \trace N=y_0\right\}
\] 
is a face of $K^\sharp_{m,n}$ containing $(y_0^*,y^*)$. This face is with respect to the standard face 
\[S_{r^\sharp}^1=\left\{ (y_0,\bar{y})\in C_n^\sharp: \sum_{i=1}^{r^\sharp} \bar{y}_i=y_0\right\}.
\]
Furthermore, by \cite[Theorem 6.2]{deSa}, we have $(y_0^*,y^*) \in \relint (\mathcal{F}_{K^\sharp})$. 
Therefore, if $(x_0^*,x^*)$ and $(y_0^*,y^*)$ are strictly complementary then $\mathcal{F}_{K^\sharp}=\mathcal{F}_{K}^\bigtriangleup$. Moreover, we note that 
\[\mathcal{F}_{K}^\bigtriangleup=\left\{(y_0,y): y=u^*\begin{pmatrix}
-N & A \\ C & 0
\end{pmatrix} (v^*)^T, N \in R^{r\times r} ,\trace (N)=y_0\geq \left\|\begin{pmatrix}
N & -A\\-C & 0
\end{pmatrix} \right\|_*\right\}.
\]
 This implies that $r=r^\sharp$; otherwise, the point $(y_0,\tilde{y})$, which is defined by  
\begin{equation}
 \label{eq:face}
 y_0>0,\tilde{y}=u^*\begin{pmatrix}
-Diag(\tilde{y}_1,\ldots,\tilde{y}_r) & 0\\
0 & 0
\end{pmatrix} (v^*)^T,\sum_{i=1}^r\tilde{y}_i=y_0,\tilde{y}_i>0,
 \end{equation}
 belongs to $\mathcal{F}_{K}^\bigtriangleup$ but does not belong to $\mathcal{F}_{K^\sharp}$, this gives a contradiction. Then we deduce that 
$$z^*=x^*-y^*=u^*[\Diag( x_0^*-\tau_1^*,\ldots,x_0^*-\tau_r^*,\sigma_{r+1}^*,\ldots,\sigma_m^*)\quad 0] (v^*)^T.$$
Therefore the projector of $z^*$ onto $K_{m,n}$ is differentiable by Theorem \ref{theorem:diff_proj_oper_norm}.

Conversely, suppose that  the projector onto $K_{m,n}$ is differentiable at $(z_o^*,z^*)$, then we have $r=r^\sharp$. We know that each face of $K_{m,n}^\sharp$  unique determines a standard face of $C_n^\sharp$. Suppose that  $\mathcal{F}_{K^\sharp}\ne \mathcal{F}_{K}^\bigtriangleup$, i.e., $S_{r^\sharp}^1$ is not the standard face of   $\mathcal{F}_{K}^\bigtriangleup$. Then the standard face of $\mathcal{F}_{K}^\bigtriangleup$ has the form 
$S_{\bar{r}}^1=\left\{ (y_0,\bar{y})\in C_n^\sharp: \sum_{i=1}^{\bar{r}} \bar{y}_i=y_0\right\}
$
with $\bar{r}\ne r$. If $\bar{r}<r$ then the point $(y_0,\tilde{y})$ defined in \eqref{eq:face} belongs to $\mathcal{F}_{K}^\bigtriangleup$ but definitely does not belong to the face of $K_{m,n}^\sharp$ generated by $S_{\bar{r}}^1$. This is a contradiction. If $\bar{r}>r$ then the point  $(\bar{y}_0,\bar{y})$ with $\bar{y}_0>0$ and  
\[\bar{y}=u^*\begin{pmatrix}
-Diag(\bar{y}_1,\ldots,\bar{y}_{\bar{r}}) & 0\\
0 & 0
\end{pmatrix} (v^*)^T,\sum_{i=1}^{\bar{r}}\bar{y}_i=y_0,\bar{y}_i>0,
\]
belongs to the face  of $K_{m,n}^\sharp$ generated by $S_{\bar{r}}^1$ but definitely does not belong to  $\mathcal{F}_{K}^\bigtriangleup$. We again get a contradiction. Therefore,  $\mathcal{F}_{K^\sharp}=\mathcal{F}_{K}^\bigtriangleup$, i.e., $((x_o^*,x^*),(y_o^*,y^*))$ is strict complementary. ~ \QEDB
\end{proof}
\section{Conclusion} 
\label{sec:end}
We analyse an inexact non-interior continuation method for variational inequalities over general closed convex sets. The method can deal with large scale problems by solving involving Newton equations inexactly. Proposition \ref{prop:bsa_property} is the key to achieving the global linear convergence of the algorithm. A $\vartheta$-self-concordant barrier of $X$ is the sufficient condition to get the inequality $\norm{\derD^2 p_\mu(z)} \leq \cfrac{1}{4\mu}$ in this proposition. For local convergence, Theorem \ref{theorem:seq_Jacob_conver} serves as a cornerstone to establish the local quadratic convergence of the algorithm. Therefore, in Section \ref{sec:application}, we always choose self-concordant barriers in application the algorithm to concrete closed convex sets, and verify the condition $\norm{\derD p_\mu(z)-\derD \Pi_X(z^*)}=O(\norm{(z-z^*,\mu)})$ for these sets. We further prove that differentiability of $\Pi_X$ at $z^*$ is equivalent to strict complementarity of $(x^*,y^*)$ when $X$ is a non-negative orthant, a semidefinite cone, an epigraph of matrix operator norm or an epigraph of matrix nuclear norm. 

\section*{Acknowledgement} We thank the anonymous reviewers for their meticulous and insightful comments, which help us improve the paper. LTKH gives special thanks to Prof. Nicolas Gillis for his support.
\bibliographystyle{plain}
\bibliography{NICM}
\begin{appendix}

\small
\section{Technical proofs}
\subsection{Proof of Proposition \ref{prop:Newton_direction_bounded}}
\label{proof1}
(i) From Equation \eqref{eq:Newton_direction} we get
$$\norm{\triangle \tilde{w}^{(k)}} \leq C ( \norm{H_{\mu_k}(w^{(k)})} + \norm{r_1^{(k)}}) \leq C(1+\theta_1) \Psi_{\mu_k}(w^{(k)}) \leq C(1+\theta_1)\beta \mu_k. $$
(ii) We have
\begin{align} \label{temp3}
\begin{split}
\norm{ H_0(w^{(k)})} - \Psi_{\mu_k}(w^{(k)})& \leq  \norm{\phi_0(w^{(k)})}  - \norm{\phi_{\mu_k}(w^{(k)})}\\
& \leq \norm{\phi_0(w^{(k)}) - \phi_{\mu_k}(w^{(k)})}\\
& = \norm{p_0(x^{(k)}-y^{(k)}) - p_{\mu_k}(x^{(k)}-y^{(k)})} \leq \sqrt{\vartheta} \mu_k,
\end{split}
\end{align}
where we have used the property $\norm{(a,b)} \leq \norm{a} + \norm{b}$ for the first inequality and Theorem \ref{theorem:bsa} for the last inequality. Inequality \eqref{temp3} with Equation \eqref{eq:pure_Newton_direction} give us
\begin{align*} \|\triangle \hat{w}\|& \leq C (\norm{ H_0(w^{(k)})} + \norm{r_2^{(k)}}) \leq C(1+\theta_2)( \Psi_{\mu_k}(w^{(k)}) + \sqrt{\vartheta}\mu_k )\\
 &\leq C(1+\theta_2)(\beta + \sqrt{\vartheta}) \mu_k.
\end{align*}
\subsection{Proof of Theorem \ref{theorem:verify}}
\label{proof2}
\subsubsection{Non-negative orthant $\mathbb{R}^n_+$} Gradient and Hessian of the barrier function are
$$ \nabla f(x)= -\sum\limits_{i=1}^n\cfrac{1}{x_i} e_i, \quad \nabla ^2 f(x) = \sum\limits_{i=1}^n \cfrac{1}{x_i^2} e_i e_i^T,
$$
where $e_i$ denote the $i-$th standard unit vector of $\mathbb{R}^n$. 
The corresponding barrier-based smoothing approximation is $p_\mu(z)=\cfrac12\sum\limits_{i=1}^n\bracket{z_i+ \sqrt{z_i^2 + 4\mu^2}} e_i$.
Its Jacobian is $$\der p_\mu(z)= \cfrac12\Diag\left(\left. 1 + \cfrac{z_i}{\sqrt{z_i^2 + 4\mu^2}}\right|_{i=1,\ldots,n}\right).$$ 
The projection of $z$ onto $\mathbb{R}^n_+$ is $\Pi_{\mathbb{R}^n_+}(z)=[z]_+$. We observe that the projector is differentiable at $z^*$ if and only if $z_i^*\ne 0, \forall i=1,\ldots,n$. On the other hand, a pair $(x^*,y^*)$ is strictly complimentary if and only if $x_i^*+y_i^* >0$ for all $i=1,\ldots, n$, see \cite{Bintong}. Furthermore, $x^*+y^*=\Pi_{\mathbb{R}^n_+}(z^*)+\Pi_{\mathbb{R}^n_+}(-z^*)$. Hence, it is easy to see that differentiability of the projector at $z^*$ is equivalent to strict complimentarity of $(x^*,y^*)$. 

Now let $z_i^*\ne 0$ for $i=1,\ldots,n$, then we observe that the Jacobian $\der p_\mu(z)$ converges to $$\der \Pi_{\mathbb{R}^n_+}(z^*)=\cfrac12\Diag\left(\left. 1 + \cfrac{z_i^*}{|z_i^*|}\right|_{i=1,\ldots,n}\right)$$ when $(z,\mu)\rightarrow (z^*,0)$.  Since the map $(z,\mu)\mapsto\der p_\mu(z)$ is continuously differentiable at $(z^*,0)$, thus is locally Lipschitz at this point.  Consequently, $\norm{\der p_\mu(z)-T^*}= O(\norm{(z-z^*,\mu)})$. On the other hand, $\norm{\derD p_\mu(z)-\derD \Pi_{\mathbb{R}^n_+}(z^*)}=\norm{\der p_\mu(z)-\der \Pi_{\mathbb{R}^n_+}(z^*)}$. Thus, we get the result. 
\subsubsection{Positive semidefinite cone $\mathbb{S}_+^n$}
We have $
\nabla f(x)= -x^{-1}.$
From the equation $x + \mu^2 \nabla f(x) = z,
$ we deduce that the corresponding barrier-based smoothing approximation is
$$ p_\mu(z)=\cfrac12\left(z+ (z^2 + 4\mu^2 I)^{1/2}\right).
$$
Denote
 $\mathfrak{g}: u\in \mathbb{R} \mapsto \mathfrak{g}(u)= u+ \sqrt{u^2 + 4\mu^2}$, $\mathfrak{g}'(u)=1+ \cfrac{u}{\sqrt{u^2 + 4\mu^2}}$ and $\mathfrak{g}^{(1)}$ is a matrix whose $(i,j)$-th entry  with respect to a vector $d$ is 
\begin{align*}
(\mathfrak{g}^{(1)}(d))_{ij}&=\left\{ \begin{array}{ll}
\cfrac{\mathfrak{g}(d_i)-\mathfrak{g}(d_j)}{d_i - d_j}  &\text{if} \; d_i\ne d_j\\
\mathfrak{g}'(d_i) & \text{if}\; d_i = d_j
\end{array} \right.\\
& = \left\{ \begin{array}{ll}1+ \cfrac{\sqrt{d_i^2 + 4\mu^2} - \sqrt{d_j^2 + 4\mu^2}}{d_i - d_j}  &\text{if} \; d_i\ne d_j\\
1+ \cfrac{d_i}{\sqrt{d_i^2 + 4\mu^2}} & \text{if}\; d_i = d_j
\end{array}\right.\\
& = 1+\cfrac{d_i + d_j}{\sqrt{d_i^2 + 4\mu^2} + \sqrt{d_j^2 + 4\mu^2}}.
\end{align*}

Let $z=q \Diag(\lambda_f(z)) q^T$, then
$\derD p_\mu(z)[h]= \cfrac12 q \left[ \mathfrak{g}^{(1)} (\lambda_f(z)) \circ (q^T h q)\right] q^T.
$
Projection of $z$ onto $\mathbb{S}^n_+$ is $\Pi_{\mathbb{S}^n_+}(z)=q\Diag\bracket{[\lambda_f(z)]_+}q^T$.  We see that $\Pi_{\mathbb{S}^n_+}(\cdot)$ is differentiable at $z^*$ if and only if all eigenvalues $\lambda_i^*$, for $ i=1,\ldots,n$, of $z^*$ are non-zeroes. Furthermore, strict complementarity of $(x^*,y^*)$ is equivalent to the condition that all eigenvalues of $x^*+y^*$ is positive. We now let $z^*=\hat q\Diag(\lambda_f(z^*)) \hat q^T $ be the eigenvalue decomposition of $z^*$. Then, 
\begin{align*}
x^*+y^*&=\Pi_{\mathbb{S}^n_+}(z^*)+ \Pi_{\mathbb{S}^n_+}(-z*)\\
&=\hat q\Diag\bracket{[\lambda_f(z^*)]_+} \hat q^T+ \hat q\Diag\bracket{[-\lambda_f(z^*)]_+} \hat q^T\\
&=\hat q \Diag\bracket{[\lambda_f(z^*)]_+ + [-\lambda_f(z^*)]_+}\hat q^T.
\end{align*}
Hence differentiability of $\Pi_{\mathbb{S}^n_+}(\cdot)$ at $z^*$ is equivalent to strict complementarity of $(x^*,y^*)$.

Now we consider $z^*$ whose eigenvalues are non-zeros. Let $(z,\mu)$ go to $(z^*,0)$, then $\lambda_f(z)$ converges to $\lambda^*$. Let $\bar{q}$ be a limit point of $q$. We then have $z^*=\bar{q} \Diag(\lambda^*) \bar{q}^T$ with $\bar{q} \in \mathcal{O}^n(z^*).
$
We deduce from $\lambda_i^* \ne 0$, $i=1,\ldots,n$ that 
$$(\mathfrak{g}^{(1)} (\lambda_f(z)))_{ij}= 1 + \cfrac{\lambda_i + \lambda_j}{\sqrt{\lambda_i^2 + 4\mu^2} + \sqrt{\lambda_j^2 + 4\mu^2}}\to 1+ \cfrac{\lambda_i^* + \lambda_j^*}{|\lambda_i^*| + | \lambda_j^*|}.$$  
Therefore, by Theorem \ref{theorem:seq_Jacob_conver}, when $(z,\mu)\to (z^*,0)$, where  $z^*$ are differential points of $\Pi_{\mathbb{S}^n_+}(\cdot)$, $\derD p_\mu(z)$ converges to $\derD \Pi_{\mathbb{S}^n_+}(z^*)$ with 
\begin{align}\label{eq:der_proj_Sn}
\derD \Pi_{\mathbb{S}^n_+}(z^*)[h]=\cfrac12 \bar{q} \left[ \bar{\mathfrak{g}}^{(1)} (\lambda^*) \circ (\bar{q}^T h \bar{q})\right] \bar{q}^T,
\end{align}
where $\bar{\mathfrak{g}}^{(1)}(\lambda^*)_{ij}= 1+ \cfrac{\lambda_i^* + \lambda_j^*}{|\lambda_i^*| + | \lambda_j^*|}$. Note that Formula \eqref{eq:der_proj_Sn} is independent of the choice $\bar{q}$. Finally, similarly to the case $\mathbb{R}_+^n$, we have $\norm{\derD p_\mu(z) - \derD \Pi_{\mathbb{S}^n_+}(z^*)}=O(\norm{z-z^*,\mu})$ since $(z,\mu)\mapsto\derD p_\mu(z)$ is locally Lipschitz around $(z^*,0)$.

\subsection{Proof of Proposition \ref{prop:limit_polyhedral}}
\label{proof4}
We note that $\mathcal{F}_{I^*}$  is the  unique neighbour face of $z^*$ and $z^* \in \inter (\mathcal{F}_I + \mathcal{N}_I)$ as the projector is differentiable at $z^*$ (see Proposition \ref{prop:polyhedral_properties}). When $(z,\mu)\rightarrow (z^*,0)$, we have $x\rightarrow \Pi_K(z^*)=\bar{z}^*,$ satisfying $A_i\bar{z}^* =b_i, \forall i\in \mathcal{I}^*$ and  $A_i\bar{z}^* > b_i, \forall i\not\in \mathcal{I}^*.$ 
From Equation \eqref{eq:smooth_equa_polyhedral} we have
\begin{align} \label{temp5}
 z^* -\bar{z}^* = \lim\limits_{(z,\mu)\rightarrow (z^*,0)} (x-z) =   \lim\limits_{(z,\mu)\rightarrow (z^*,0)} \sum\limits_{i\in \mathcal{I}^*} \cfrac{\mu^2}{A_i x-b_i} ( -A_i^T)
\end{align}
If there exists $j\in \mathcal{I}^*$ and a subsequence $(z,\mu)_k\rightarrow (z^*,0)$ such that  $\cfrac{\mu_k^2}{A_j x_k - b_j} \rightarrow 0$  then we take the limit of this subsequence in \eqref{temp5} to get
$$z^* -\bar{z}^* = \lim\limits_{k\rightarrow \infty}  \sum\limits_{i\in \mathcal{I}^*\setminus\{j\}} \cfrac{\mu_k^2}{A_i x_k-b_i} ( -A_i^T) \in \coni \{ -A_i^T : i\in  \mathcal{I}^*\setminus\{j\}\} = \mathcal{N}_{ \mathcal{I}^*\setminus\{j\}}  $$
On the other hand, $\bar{z}^*\in \mathcal{F}_{\mathcal{I}^*\setminus\{j\}}$ as $A_i\bar{z}^*=b_i \;\forall i\in \mathcal{I}^*\setminus\{j\}$. Hence $z^* = \bar{z}^* + z^* -\bar{z}^* \in \mathcal{F}_{I^*\setminus\{j\}} + \mathcal{N}_{\mathcal{I}^*\setminus\{j\}}$, which implies $\mathcal{I}^*\setminus\{j\}$ is a neighbour face of $z^*$. This contradicts to the fact $\mathcal{I}^*$ is the unique neighbour face of $z^*$. Therefore, for all $i\in \mathcal{I}^*, \cfrac{\mu^2}{A_i x -b_i}$ only have nonzero limit points. The result follows then.
\subsection{Proof of Proposition \ref{prop:proj_vector_cone}}
\label{proof3}
By re-indexing $z^*$ if necessary, we can assume that $\pi$ is the identity permutation. For each $i\in \{1,\ldots,n \}$, if $|x_i^*| < t^*$ then 
$$x_i^* = \lim x_i = \lim (z_i-\cfrac{2\mu^2}{t^2-x_i^2}x_i) = z_i^*.
$$
Together with $\sgn(x_i)=\sgn(z_i)$ and $t> |x_i|$, we deduce that $x_i^*=\sgn(z_i^*) t^* \; \text{or} \; z_i^* \; \text{for}\; i=1,\ldots,n.
$
Moreover, $|x_i| < \min \{t,|z_i| \}$ further implies that 
$$x_i^*=\begin{cases} \sgn(z_i^*) t^* &\mbox{if} \; t^* < |z_i^*|\\
z_i^* &\mbox{if} \; t^* > |z_i^*|.\end{cases}
$$
Thus, there exists a unique positive integer $\mathbf k^*$ such that 
$$x_i^* =\begin{cases}\sgn(z_i^*) t^* &\mbox{for}\; i=1,\ldots, \mathbf k^*,\\
z_i^* &\mbox{for} \; i=\mathbf k^*+1,\ldots,n,  
\end{cases} 
$$
and $|z_{\mathbf k^*}^* |> t^* \geq |z_{\mathbf k^*+1}^*|$. Summing up $(n+1)$ equations in \eqref{eq:smooth_equa_vector} gives 
$$z_o + \sum\limits_{i=1}^n|z_i| = t+ \sum\limits_{i=1}^n|x_i| - \mu^2 \sum\limits_{i=1}^n \cfrac{2(t-|x_i|)}{t^2 - x_i^2} = t+ \sum\limits_{i=1}^n |x_i| - \mu^2 \sum\limits_{i=1}^n \cfrac{2}{t+ |x_i|}.
$$
If $t^*>0$ then taking limnit gives, $z_o^*+\sum\limits_{i=1}^n |z_i^*| = t^* + \sum\limits_{i=1}^n|x_i^*|$, and hence
$$(\mathbf k^*+1)t^* = t^* + \sum\limits_{i=1}^{\mathbf k^*}|x_i^*| = z_o^* + \sum\limits_{i=1}^n|z_i^*| - \sum\limits_{i=1}^n|x_i^*|=z_o^* + \sum\limits_{i=1}^{\mathbf k^*}|z_i^*|.
$$
If $t^*=0$ then $|x_i|<t$ implies that $x_i^*=0$ for $i=1,\ldots,n$. Thus 
\begin{align*}
z_o^* + \sum\limits_{i=1}^n|z_i^*| &= \lim (z_o+\sum\limits_{i=1}^n|z_i|) = \lim ( t+ \sum\limits_{i=1}^n|x_i| - \mu^2\sum\limits_{i=1}^n\cfrac{2}{t+|x_i|}\\
& \leq \lim (t+\sum\limits_{i=1}^n|x_i|) = t^* + \sum\limits_{i=1}^n|x_i^*|=0.
\end{align*}
Subsequently, $z_o^* + \sum\limits_{i=1}^{\mathbf k^*}|z_i^*| \leq z_o^* + \sum\limits_{i=1}^n|z_i^*|\leq 0$. Hence, $$t^* = \max \left\{\cfrac{1}{\mathbf k^*+1}(z_o^* + \sum\limits_{i=1}^{\mathbf k^*} | z^*_{\pi(i)}|),0 \right\}.$$
\section{Example} 
\label{example} 
We consider the second order cone in $\mathbb{R}^3$$$K_2=\{ (t,z): z \in \mathbb{R}^2, t\in \mathbb{R}_+,  \norm{z} \leq t\}.
$$ 
Firstly, we use the barrier $f^{(1)}(t,z)=-\log(t^2-\|z\|^2).$
Denote $\mathbf M=t^2 - z_1^2 - z_2^2$. The gradient of $f^{(1)}$ is
$$ \nabla f^{(1)}(t,z)=\bracket{-2t \mathbf M^{-1}, 2z_1 \mathbf M^{-1}, 2z_2 \mathbf M^{-1}}.$$
The smoothing approximation $p^{(1)}_\mu(t^o,z^o)=(t,z)$ regarding to $f^{(1)}(t,z)$ is computed by
\begin{equation} \label{temp03}
\begin{cases} t -\mu^2 (2 t \mathbf M^{-1})&=t^o\\
 z_1 + \mu^2 (2 z_1 \mathbf M^{-1}) &= z_1^o\\
 z_2 + \mu^2 (2 z_2 \mathbf M^{-1})&=z_2^o.
\end{cases}
\end{equation}
 The unique solution of  \eqref{temp03} is
$$\begin{cases}
t=\cfrac14\bracket{2t^o+\sqrt{(t^o-\norm{z^o})^2 + 8\mu^2} + \sqrt{(t^o+\norm{z^o})^2 + 8\mu^2}}\\
z_1=\cfrac14 \cfrac{z_1^o}{\norm{z^o}} \bracket{2\norm{z^o} +  \sqrt{(t^o+\norm{z^o})^2 + 8\mu^2} - \sqrt{(t^o-\norm{z^o})^2 + 8\mu^2} }\\
z_2=\cfrac14 \cfrac{z_2^o}{\norm{z^o}} \bracket{2\norm{z^o} +  \sqrt{(t^o+\norm{z^o})^2 + 8\mu^2} - \sqrt{(t^o-\norm{z^o})^2 + 8\mu^2} }.
\end{cases}
$$
Denote $s_1=\sqrt{(t^o-\norm{z^o})^2 + 8\mu^2}, s_2=\sqrt{(t^o+\norm{z^o})^2 + 8\mu^2}$. We have
$$\der p_\mu^{(1)}(t^o,z_1^o,z_2^o)=\begin{pmatrix}
d_{11} & d_{12} & d_{13}\\
d_{21} & d_{22} & d_{23} \\
d_{31} & d_{32} & d_{33},
\end{pmatrix} $$ where 
\begin{align*}
d_{11}&=\frac12+\frac{t^o(t^o-\norm{z^o})}{4s_1}+\frac{t^o(t^o+\norm{z^o})}{4s_2}, \\
d_{12}=d_{21}&=-\frac{(t^o-\norm{z^o})z_1^o}{4\norm{z^o}s_1}+\frac{(t^o+\norm{z^o})z_1^o}{4\norm{z^o}s_2},\\
d_{13}=d_{31}&=-\frac{(t^o-\norm{z^o})z_2^o}{4\norm{z^o}s_1}+\frac{(t^o+\norm{z^o})z_2^o}{4\norm{z^o}s_2},\\
d_{22}&=\frac12+\frac{(z_1^o)^2}{\norm{z^o}^2} \bracket{\frac{t^o+\norm{z^o}}{s_2} + \frac{t^o-\norm{z^o}}{s_1}} +\frac{(z_2^o)^2(s_2-s_1)}{\norm{z^o}^2},\\
d_{23}=d_{32}&=\frac{z_1^o}{\norm{z^o}}\bracket{\frac{(t^o+\norm{z^o})z_2^o}{s_2} + \frac{(t^o-\norm{z^o})z_2^o}{s_1}},\\
d_{33}&=\frac12+\frac{(z_2^o)^2}{\norm{z^o}^2} \bracket{\frac{t^o+\norm{z^o}}{s_2} + \frac{t^o-\norm{z^o}}{s_1}} +\frac{(z_1^o)^2(s_2-s_1)}{\norm{z^o}^2}.
\end{align*}
We choose $(t^o,z_1^o,z_2^o)$ such that $ (t^o,z_1^o,z_2^o) \to (t^*,z_1^*,z_2^*)=(0,1,0)$; then $s_1\to 1, s_2 \to 1, \norm{z^o}\to 1$. We imply 
$$\lim\limits_{(t^o,z_1^o,z_2^o,\mu)\rightarrow (0,1,0,0)} \der p_\mu^{(1)}(t^o,z_1^o,z_2^o) = \begin{pmatrix} 1/2 & 1/2 & 0\\
1/2 & 1/2 & 0\\
0 & 0 & 1/2
\end{pmatrix},
$$ which equals to $\der\Pi_{K_2}(0,1,0)$ by Theorem \ref{theorem:seq_Jacob_conver}. Now we use another barrier 
$$ f^{(2)} (u,v_1,v_2) = -\log(u^2-\|v\|^2)-\log ( u-v_1) - \log (u+v_1).$$
 Denote $\mathbf M_\mu=u^2 - v^2 - w^2$. Gradient $\nabla f^{(2)}(u,v_1,v_2)$ of $ f^{(2)}$ is 
$$ \bracket{-2u \mathbf M_\mu^{-1} -\cfrac{1}{u-v_1} - \cfrac{1}{u+v_1}, 2v_1 \mathbf M_\mu^{-1} +\cfrac{1}{u-v_1}  - \cfrac{1}{u+v_1} , 2v_2 \mathbf M_\mu^{-1}}.
$$
Let $p_\mu^{(2)}(t^*,z_1^*,z_2^*)=p_\mu^{(2)}(0,1,0)=(u,v_1,v_2)$, which is defined by
$$\begin{cases} u - \mu^2 \bracket{2 u \mathbf M_\mu^{-1} +\cfrac{1}{u-v_1} + \cfrac{1}{u+v_1}}  =0 \\ 
v_1 + \mu^2 \bracket{ 2 v_1 \mathbf M_\mu^{-1} + \cfrac{1}{u-v_1} -\cfrac{1}{u+v_1}} = 1\\
v_2 + \mu^2 (2 v_2 \mathbf M_\mu^{-1}) = 0.
\end{cases}
$$
The third equation implies $v_2=0$, hence $\mathbf M_\mu=u^2 - v_1^2 =(u-v_1)(u+v_1).$ Thus, the first and the second equation imply
$$ \begin{cases}
u+v_1 - \cfrac{4\mu^2}{u+v_1}=1\\
u-v_1 -\cfrac{4\mu^2}{u-v_1}=-1,
\end{cases}
$$
which give $u=\cfrac12\sqrt{1+16\mu^2}, v_1=\cfrac12, \mathbf M_\mu = 4\mu^2$. Denote $a=\sqrt{1+16\mu^2}$. Hessian matrix $\nabla^2  f^{(2)}(u,v_1,v_2)$ of the barrier $ f^{(2)}$ at $(u,v_1,v_2)$ is 
\begin{align*}
\begin{pmatrix}
\frac{1+8\mu^2}{16\mu^4}+\frac{4}{(a-1)^2}+\frac{4}{(a+1)^2} & -\frac{a}{16\mu^4} - \frac{4}{(a-1)^2} +\frac{4}{(a+1)^2} & 0\\
 -\frac{a}{16\mu^4} - \frac{4}{(a-1)^2} +\frac{4}{(a+1)^2} &\frac{1+8\mu^2}{16\mu^4} + \frac{4}{(a-1)^2} +\frac{4}{(a+1)^2} & 0\\
 0&0 & \frac{1}{2\mu^2}
\end{pmatrix}=\begin{pmatrix}
\frac{1+8\mu^2}{8\mu^4}& \frac{-a}{8 \mu^4}&0\\ 
 \frac{-a}{8 \mu^4} &\frac{1+8\mu^2}{8\mu^4} & 0\\
 0 & 0 & \frac{1}{2\mu^2}.
\end{pmatrix}
\end{align*}
We remind that 
$
\der p^{(2)}_\mu(0,1,0)=[I + \mu^2 \nabla^2 f^{(2)}(u,v_1,v_2)]^{-1}=\begin{pmatrix}
1/2 &  \cfrac{1}{2a} &0 \\
\cfrac{1}{2a} & 1/2&0\\
0 & 0 & 2/3
\end{pmatrix}.
$
Thus, $\lim\limits_{\mu\rightarrow 0}\der p^{(2)}_\mu(0,1,0)$ equals 
$\begin{pmatrix}
1/2&1/2&0\\
1/2&1/2&0\\
0&0&2/3
\end{pmatrix}.$
It does not coincide with the Jacobian of $\Pi_{K_2}$ at $(0,1,0)$. This shows that the limit $\lim\limits_{(t^o,z_1^o,z_2^o,\mu)\rightarrow(0,1,0,0)}\derD p^{(2)}_\mu(t^o,z^o)$ does not exist.
\end{appendix}
\end{document}